\documentclass[11pt,reqno,letterpaper]{amsart}
\usepackage[utf8]{inputenc}
\usepackage[T1]{fontenc}
\usepackage{color}
\usepackage[colorlinks=false, allcolors=blue,backref=page]{hyperref}
\usepackage{amsmath, amssymb, amsthm}
\usepackage{mathrsfs}
\usepackage{mathtools}
\usepackage[noabbrev,capitalize,nameinlink]{cleveref}
\usepackage[noadjust]{cite}
\usepackage{graphics}
\usepackage{pifont}
\usepackage{tikz}
\usepackage{bbm}
\usepackage[T1]{fontenc}
\DeclareMathOperator*{\argmax}{argmax}

\usetikzlibrary{arrows}

\usepackage{environ}
\usepackage{framed}
\usepackage{url}
\usepackage[linesnumbered,ruled,vlined]{algorithm2e}
\usepackage[noend]{algpseudocode}
\usepackage[labelfont=bf]{caption}
\usepackage{cite}
\usepackage{framed}
\usepackage[framemethod=tikz]{mdframed}
\usepackage{appendix}
\usepackage{graphicx}
\usepackage[textsize=tiny]{todonotes}
\usepackage{tcolorbox}
\usepackage{enumerate}
\allowdisplaybreaks[1]
\usepackage{enumerate}
\usepackage{stmaryrd}
\usepackage[margin=1in]{geometry}
\usepackage[normalem]{ulem}
\usepackage{pgfplots}
\pgfplotsset{compat=1.3}
\usepackage{mathrsfs}
\usetikzlibrary{arrows}
\newcommand{\A}{\mc{A}^n}
\newcommand{\B}{\mc{B}^n}
\newcommand{\C}{\mc{C}^n}
\newcommand{\D}{\mc{D}^n}
\newcommand{\E}{\mc{E}^n}

\newcommand{\T}{\Omega}
\newcommand{\floor}[1]{\left\lfloor #1 \right\rfloor}
\newcommand{\ceil}[1]{\left\lceil #1 \right\rceil}
\newcommand{\paren}[1]{\left( #1 \right)}
\newcommand{\sqb}[1]{\left[ #1 \right]}
\newcommand{\set}[1]{\left\{ #1 \right\}}
\usepackage[shortlabels]{enumitem}
\crefformat{enumi}{#2#1#3}
\crefrangeformat{enumi}{#3#1#4 to~#5#2#6}
\crefmultiformat{enumi}{#2#1#3}
{ and~#2#1#3}{, #2#1#3}{ and~#2#1#3}
\DeclareSymbolFont{symbolsC}{U}{pxsyc}{m}{n}
\SetSymbolFont{symbolsC}{bold}{U}{pxsyc}{bx}{n}
\DeclareFontSubstitution{U}{pxsyc}{m}{n}
\DeclareMathSymbol{\medcircle}{\mathbin}{symbolsC}{7}
\crefname{equation}{}{} %remove ``Equation
\AtBeginEnvironment{appendices}{\crefalias{section}{appendix}} %appendices
\usepackage[color,final]{showkeys} %add in final into parameter to remove showkeys

\colorlet{refkey}{orange!20}
\colorlet{labelkey}{blue!30}

% ------   Theorem Styles -------
\numberwithin{equation}{section}
\newtheorem{theorem}{Theorem}[section]
\newtheorem{proposition}[theorem]{Proposition}
\newtheorem{lemma}[theorem]{Lemma}

\newtheorem{corollary}[theorem]{Corollary}
\newtheorem{conjecture}[theorem]{Conjecture}
\newtheorem*{question*}{Question}
\newtheorem{fact}[theorem]{Fact}

\theoremstyle{definition}
\newtheorem{definition}[theorem]{Definition}

\newtheorem*{definition*}{Definition}

\theoremstyle{remark}
\newtheorem*{remark}{Remark}

% ----- Delimiters ----

\newcommand{\mb}{\mathbb}
\newcommand{\mbf}{\mathbf}

\newcommand{\mc}{\mathcal}

\newcommand{\ol}{\overline}
\newcommand{\on}{\operatorname}

\newcommand{\fl}[1]{\left\lfloor#1\right\rfloor}

\newcommand{\U}{\on{U}}
\newcommand{\dtv}{d_{\on{TV}}}
\newcommand{\vol}[1]{\on{vol}\left( #1 \right)}

\let\originalleft\left
\let\originalright\right
\renewcommand{\left}{\mathopen{}\mathclose\bgroup\originalleft}
\renewcommand{\right}{\aftergroup\egroup\originalright}

\allowdisplaybreaks

\newif\ifpublic

\publictrue

\ifpublic

\newcommand{\ignore}[1]{}

\else

\fi
\title{Size of the Largest Sum-Free Subset of $[n]^3$ and $[n]^4$}

\begin{document}

\author{Saba Lepsveridze}
\address{Massachusetts Institute of Technology, Cambridge, MA, USA}
\email{sabal@mit.edu}

\author{Yihang Sun}
\address{Stanford University, Stanford, CA, USA}
\email{kimisun@stanford.edu}

\begin{abstract}
We determine the density of the largest sum-free subset of the lattice cube  $\{1, 2, \dots, n\}^d$ for $d = 3$ and $d = 4$. This solves a conjecture of Cameron and Aydinian in dimensions $3$ and $4$.  
\end{abstract}

\maketitle

\section{Introduction}\label{sec:intro}

\subsection{History}\label{sec:history}

A subset $S$ of an abelian group is \textit{sum-free} if no $x, y, z \in S$ satisfies $x + y =z$. Studying the maximum size of sum-free sets in different settings is a fundamental question in additive combinatorics. The primary aim of this paper is to determine the asymptotic density $c_d$ of the largest sum-free subset $\set{1,2, \ldots, n}^d$. For notational convenience, define   $[n] \triangleq  \set{0,1,\ldots, n-1}$. Let $S_{d,n}$ be the largest sum-free subset of $[n]^d$, then equivalently the question of interest is the value
\begin{equation}\label{eq:def-cd}
	c_d \triangleq  \limsup_{n \to \infty}\frac{\left|S_{d,n}\right|}{n^d}.
\end{equation}

For $d=1$, it is a folklore result that $c_1=1/2$. Determining the value of $c_d$ for $d\ge 2$, on the other hand, is nontrivial. This question was posed by Aydinian \cite[Problem~10]{Cam05} and Cameron \cite{Cam02}, and it also features as an open problem in the list compiled by Green \cite[Problem~6]{GreOp}. Elsholtz and Rackham \cite{ER17} recently resolved this question for the case where $d=2$. However, the problem remained open for dimensions $d \geq 3$. 

In this paper, we develop a general linear programming relaxation argument for arbitrary dimensions, reduce the problem to finding a dual weight function, and construct the weight function for $d = 3$ and $d = 4$, thereby answering the question completely for these dimensions.

Let $\mbf{1}$ denote the all-ones vector in $\mb{R}^d$ and define
 \begin{equation}
 \begin{aligned}
c_d^* &\triangleq  \max_{u\in [0, d]} \on{vol}_d\left(\{v\in [0, 1]^d: \mbf{1}^\intercal v \in [u, 2u)\}\right) \\
u_d^* &\triangleq  \argmax_{u\in [0, d]} \on{vol}_d\left(\{v\in [0, 1]^d: \mbf{1}^\intercal v \in [u, 2u)\}\right)\\
S^*_{d,n} &\triangleq  \{v\in [n]^d:\mbf{1}^\intercal v \in [u_d^*n, 2u_d^*n)\}.
\end{aligned}
\label{eq:cd*}
\end{equation}

Observe that $S_{d,n}^*$ is a sum-free set with asymptotic density $c_d^*$. Hence, $c_d \geq c_d^*$. It is conjectured that $c_d = c_d^*$, meaning that $S_{d,n}^*$ is asymptotically the largest sum-free subset of $[n]^d$. Because of this simple lower bound, the heart of the problem lies in the upper bound $c_d \leq c_d^*$. 

For $d=1$, we have $u_1^*=1/2$, so $c_1\geq c_1^*=1/2$. The corresponding upper bound follows from a straightforward argument pairing elements that sum to the maximum of the set.

For $d=2$, we have $u_2^*=4/5$, so $c_2\geq c_2^*=3/5$. Recently, Elsholtz and Rackham \cite{ER17} provided the matching upper bound through detailed analysis of the upper convex hull of a sum-free set. Liu, Wang, Wilkes, and Yang \cite{LWWY23} established the stability of this maximum. However, extending these geometric methods to higher dimensions is intractable due to increasing complexity.

For $d\ge 3$, there remains a notable gap between the upper and lower bounds of $c_d$. For $d=3$, we have $u_3^* = (15-\sqrt{15})/10$, so $c_3 \geq c_3^*=(10+\sqrt{15})/20\approx 0.69$. Cameron \cite{Cam02} provides the best current upper bound of $c_3\leq 2/e\approx 0.74$. For $d=4$ the best known bounds are $0.76 \approx c_4^*\le c_4\le 0.96 $  \cite{katz,ER17}.  

\subsection{Main results}

\label{sec:results}

We determine $c_d$ for $d \in \{ 3,4 \}$. Our approach is based on a certain linear programming relaxation and it gives us a framework to address the question in higher dimensions.

\begin{theorem}\label{thm:main}
Let $d\in \{3, 4\}$. The size of the largest sum-free subset $S_{d,n}\subset [n]^d$ is
\[\left|S_{d,n}\right| = c_d^*\, n^d+O\left(n^{d-1/2}\right) \quad\text{where}\quad c_d^* = \max_{u\in [0, d]} \on{vol}_d\left(\{v\in [0, 1]^d: \mbf{1}^\intercal v \in [u, 2u)\}\right). \]
\end{theorem}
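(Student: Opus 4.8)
The plan is to prove the matching upper bound $|S_d^{(n)}| \le c_d^* n^d + O(n^{d-1/2})$ via a linear programming relaxation, since the lower bound is already supplied by the explicit construction $S_d^*$ in \eqref{eq:nd-opt}. Write $S = S_d^{(n)}$ and for each hyperplane level $k$ let $f(k) = |\{v \in S : \mbf{1}^\intercal v = k\}|$, so that $|S| = \sum_k f(k)$ and $k$ ranges over $\{d, d+1, \dots, dn\}$. The sum-free condition says that if $j, k$ are levels that both contain points of $S$ and $j+k$ is also a populated level, then the Minkowski-type convolution structure forces constraints: more precisely, for any $x, y \in S$ we cannot have $x+y \in S$. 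The key reduction is to pass from the combinatorial object $S$ to the \emph{layer-density profile} $g(t) = f(\lfloor tn \rfloor)/n^{d-1}$ for $t \in [0,d]$, which (up to $O(n^{-1/2})$-type errors coming from discretization and from the number of lattice points on an affine hyperplane slice of $[n]^d$) should be bounded above by the slice volume $\phi(t) := \on{vol}_{d-1}(\{v \in [0,1]^d : \mbf{1}^\intercal v = t\})$, and should satisfy an approximate sum-free inequality at the level of profiles.

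First I would set up the LP: the objective is to maximize $\int_0^d g(t)\,dt$ subject to (i) $0 \le g(t) \le \phi(t)$ pointwise, and (ii) a sum-free constraint which, in the relaxed continuous form, should say something like: for each $s$, one cannot simultaneously have $g$ large near two levels $t_1, t_2$ with $t_1 + t_2 = s$ while also $g$ being large near $s$ — the cleanest way to encode this is via a Cauchy–Davenport / Kneser-type or a direct measure-theoretic argument showing that the set of populated levels, suitably weighted, behaves like a sum-free subset of an interval, together with the sharper fact that \emph{within} a hyperplane slice the density cannot be too close to $\phi(t)$ on a large range if the set is sum-free. The honest difficulty is choosing constraint (ii) so that it is both (a) genuinely implied by sum-freeness of $S$ with only $O(n^{d-1/2})$ loss and (b) tight enough that the LP optimum equals exactly $c_d^*$. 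I expect the right move is: sum-freeness implies that the support of $g$, as a subset of $[0,d]$, is a sum-free set of reals (no $t_1 + t_2 = t_3$), but that alone gives only $c_d \le 2/e$-type bounds; to get the exact constant one must instead use that if $t$ and $t'=t+\epsilon$ are both heavily populated and $2t$ is heavily populated then a pigeonhole/Plünnecke argument inside the slices produces a forbidden configuration, effectively forcing the populated region to be an interval $[u, 2u)$ — at which point optimizing $u$ recovers exactly \eqref{eq:def-ud-cd*}.

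The main steps in order: (1) slice $S$ by level sets of $\mbf{1}^\intercal v$ and record the profile $g$, controlling the lattice-point-count error against the volume $\phi(t)$ by a standard estimate (the slices are dilated cross-polytope sections, with $\le O(n^{d-2})$ boundary lattice points, but the relevant error in $|S|$ will be $O(n^{d-1/2})$ — I would track where the $1/2$ in the exponent comes from, likely from optimizing a union bound or a Cauchy–Schwarz over $O(n)$ levels); (2) derive from sum-freeness the LP feasibility constraints on $g$; (3) solve the resulting LP — by LP duality, exhibit a dual certificate (a weight function $w(t) \ge 0$ on levels plus multipliers for the sum-free constraints) whose value is $c_d^*$, which is where the specific structure of dimensions $d \in \{3,4,5\}$ enters: the claim is that for these $d$ the LP optimum is attained by the interval construction, whereas presumably for large $d$ the LP has a larger optimum (consistent with the known gap); (4) convert the LP bound back to a bound on $|S|$, absorbing all discretization losses into $O(n^{d-1/2})$, and combine with the construction to conclude equality.

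The step I expect to be the main obstacle is (2)–(3): pinning down a relaxation of the sum-free condition that is simultaneously valid up to the claimed error and has LP value exactly $c_d^*$. A purely ``support is sum-free'' relaxation is too weak; a fully faithful encoding is not linear. The resolution — and the technical heart of the paper — will be to find the right intermediate family of linear inequalities (likely one inequality per pair of levels, or per triple, asserting $g(t_1) + g(t_2) + g(t_1+t_2) \le$ something like $\phi(t_1) + \phi(t_2)$ or a sharper slice-additive-combinatorics bound), prove each such inequality from sum-freeness of $S$ via an additive-combinatorics argument inside $\mb{Z}^d$ (Cauchy–Davenport/Kneser in the relevant subgroup lattice, or a compression/shifting argument), and then verify — possibly with computer assistance for $d=4,5$ — that the finite-dimensional LP these inequalities define has optimum exactly $c_d^*$ for $d \in \{3,4,5\}$.
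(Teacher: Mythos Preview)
Your proposal has a genuine structural gap: you slice $S$ by the hyperplanes $\mathbf{1}^\intercal v = k$, reducing to a one-dimensional profile $g:[0,d]\to\mathbb{R}$, and hope to find linear constraints on $g$ coming from sum-freeness that pin the LP optimum at $c_d^*$. But sum-freeness of $S$ does not translate into usable linear inequalities among the layer densities $g(t_1),g(t_2),g(t_1+t_2)$: the fibers here are $(d-1)$-dimensional slices, and knowing only their total masses is far too coarse. The candidate inequality you float, $g(t_1)+g(t_2)+g(t_1+t_2)\le\phi(t_1)+\phi(t_2)$, is neither derivable from sum-freeness nor sharp for the extremizer. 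In fact the paper records (\cref{prop:relaxed-lp}) that even the \emph{full} $d$-dimensional LP relaxation---with $g:[n]^d\to[0,1]$ and the constraint $g(x)+g(y)+g(z)\le 2$ whenever $x+y=z$---already has optimum at least $c_{d+1}^*>c_d^*$. Your one-dimensional LP sees only marginals of that relaxation, so it cannot do better.

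The paper's resolution is to project in a different direction: forget one coordinate via $\pi:\mathbb{R}^d\to\mathbb{R}^{d-1}$, so that the fibers are \emph{lines} rather than hyperplane slices. On lines one has the elementary but decisive \cref{lem:nd-lines}: for $x+y=z$ in $[n]^{d-1}$, the line-counts satisfy $\lambda(x)+\lambda(y)+\lambda(z)\le 2n+1$. This yields a $(d-1)$-dimensional LP whose optimum \emph{is} $c_d^*$, and the dual certificate is built not from ad hoc weights but from probabilistic \emph{couplings}: one must exhibit random vectors $(X,Y,Z)$ with $X+Y=Z$ and prescribed uniform marginals on certain polytopes in $[0,1]^{d-1}$ (\cref{prop:BBD}, \cref{prop:cont-leftover}). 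Constructing these couplings---via joint mixability criteria and explicit simplex decompositions, separately for each $d\in\{3,4,5\}$---is the technical heart of the argument and is entirely absent from your plan.
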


As a corollary, we can also obtain the following analog in the continuous setting.

\begin{corollary}\label{cor:main-cont}
Let $d\in \{3, 4\}$.
The largest measurable sum-free subset of $[0,1]^{d}$ is 

\begin{equation}\label{eq:S*}
S^* = \{ v \in [0,1]^d : \mbf{1}^\intercal v \in [u_d^*, 2u_d^*)\}
\end{equation}

and it has Lebesgue measure $c_d^*$, where $u_d^*$ and $c_d^*$ are defined in \ref{eq:cd*}.
\end{corollary}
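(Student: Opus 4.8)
My plan is to derive the continuous statement from the discrete Theorem 1.1 by a standard discretization/rescaling argument in both directions. First I would establish the lower bound: the set $S^*$ in \eqref{eq:S*} is sum-free in $[0,1]^d$ since $\mbf{1}^\intercal x, \mbf{1}^\intercal y \in [u_d, 2u_d)$ forces $\mbf{1}^\intercal(x+y) \in [2u_d, 4u_d)$, which is disjoint from $[u_d, 2u_d)$; and by \eqref{eq:def-ud-cd*} it has Lebesgue measure exactly $c_d^*$. So the largest measurable sum-free subset has measure at least $c_d^*$.

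For the matching upper bound, suppose $A \subseteq [0,1]^d$ is measurable and sum-free with $\on{vol}_d(A) = \alpha$. I would approximate $A$ from inside by a set that is a union of small axis-parallel cubes: by regularity of Lebesgue measure, for any $\varepsilon > 0$ there is a compact $K \subseteq A$ with $\on{vol}_d(K) \ge \alpha - \varepsilon$, and then for $n$ large a union $A_n$ of cubes of the $\frac1n$-grid contained in $K$ has volume at least $\alpha - 2\varepsilon$. The subtlety is that $A_n$ need not be sum-free as a subset of $\RR^d$ — two grid cubes summing into a third can happen at the boundary. I would handle this by working with the centers (or, more cleanly, the "lower-left corners scaled by $n$") of the chosen cubes: let $T \subseteq [n]^d$ be the index set, $|T| \ge (\alpha - 2\varepsilon) n^d - O(n^{d-1})$. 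If $a + b = c$ with $a,b,c \in T$ (as lattice points), then the corresponding cubes $Q_a + Q_b$ and $Q_c$ overlap in a set of positive measure, contradicting sum-freeness of $A$ after shrinking the cubes slightly — more carefully, I'd choose $K$ and then pick grid cubes at scale $\frac{1}{n}$ whose images lie in $K$ and translate to a sublattice argument, so that a lattice relation $a+b=c$ produces genuine points $x \in Q_a, y \in Q_b, z \in Q_c$ of $A$ with $x + y = z$. Hence $T$ is a sum-free subset of $[n]^d$, and by Theorem 1.1, $|T| \le c_d^* n^d + O(n^{d-1/2})$. Combining, $\alpha - 2\varepsilon \le c_d^* + o(1)$; letting $n \to \infty$ and then $\varepsilon \to 0$ gives $\alpha \le c_d^*$.

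Finally, for the characterization that $S^*$ is (up to measure zero) the \emph{unique} extremal set, I would either invoke a stability version of Theorem 1.1 if one is available in the paper, or argue directly: the discrete extremal configurations are concentrated on the slab $\{\mbf{1}^\intercal v \in [u_d n, 2u_d n)\}$, and pulling this back through the discretization forces any extremal $A$ to agree with $S^*$ up to a null set. The main obstacle is the bookkeeping in the upper-bound direction: ensuring that a lattice-additive triple in the index set $T$ genuinely certifies a sum in $A \subseteq \RR^d$ (not merely in a slightly enlarged set), which requires choosing the grid cubes with a small safety margin inside $K$ and checking that the $O(n^{d-1})$ boundary loss does not affect the density in the limit. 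Everything else is routine measure theory and a direct application of Theorem 1.1.
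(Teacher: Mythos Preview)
Your lower bound is fine, and the overall plan for the upper bound---discretize $A$ to produce a sum-free subset of $[n]^d$ and invoke Theorem~\ref{thm:main}---is the natural one. But the approximation step, as written, fails: for a general compact $K\subseteq[0,1]^d$ the union of $\tfrac1n$-grid cubes contained in $K$ need \emph{not} have volume close to $\on{vol}_d(K)$. A fat Cantor set in each coordinate gives a compact $K$ of positive measure and empty interior, so it contains no grid cube at any scale; your $A_n$ would be empty. You have also misidentified the delicate step: once cubes $Q_a,Q_b,Q_c$ are genuinely contained in $A$ and $a+b=c$, then $Q_c\subseteq Q_a+Q_b$, so any $z\in Q_c$ equals $x+y$ with $x\in Q_a\subseteq A$, $y\in Q_b\subseteq A$; no shrinking or ``safety margin'' is needed there.

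The repair is to replace ``cubes contained in $K$'' by ``cubes on which $A$ has density exceeding $1-\delta$'' for some fixed $\delta=\delta(d)$. By the Lebesgue differentiation theorem the number of such cubes is $(\alpha-o(1))n^d$. If $a+b=c$ are three such indices, a short Fubini argument on $\{(p,q)\in Q_a\times Q_b:p+q\in Q_c\}$ (a region of volume $2^{-d}n^{-2d}$) shows that for $\delta<1/(3\cdot 2^d)$ there exist $p\in A\cap Q_a$, $q\in A\cap Q_b$ with $p+q\in A\cap Q_c$, contradicting sum-freeness of $A$. Hence the index set is sum-free and Theorem~\ref{thm:main} applies. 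Note also that the paper does not spell out a proof of the corollary; the most direct route, given its structure, is simply to run the continuous weight-function argument already sketched in Section~\ref{sec:sketch} with the couplings of Sections~\ref{sec:key}--\ref{sec:leftover}, bypassing discretization entirely. Finally, the corollary only asserts that $S^*$ is \emph{a} maximizer of measure $c_d^*$; the paper provides no stability statement in dimensions $3$--$5$, so your proposed uniqueness step has no input to draw on.
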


\subsection{Organization}

\cref{sec:reduction} presents the proof of \cref{thm:main} for arbitrary dimensions $d \geq 3$, assuming the existence of a dual weight function satisfying certain conditions as outlined \cref{prop:weight}. In following sections, we construct the function for dimensions $d \in \{3,4\}$.  

In \cref{sec:key} and \cref{sec:leftover}, we construct continuous analogue of the weight function. In \cref{sec:discr}, we discretize the continuous function, proving \cref{prop:weight}. In \cref{sec:conclusion}, we explore the extension of our framework to arbitrary dimensions.

\subsection{Notation}\label{sec:notation}

In the paper, all logs are natural and all constants in asymptotic notations are universal. When clear from the context, we will often drop the dimension subscript $d$. 

The calligraphic letters are reserved for polytopes (e.g. $\mc X$, $\mc Y$, $\mc Z$). For a polytope $\mc P \subset \mb R^d$ we define $\mc P^n \triangleq n \mc P \cap \mb Z^d.$ Furthermore, $\mc P^\circ$ will denote the interior.

For a set of points $T\subset\mb{R}^d$, let $\on{conv} T$ denote their convex hull. For $d\in\mb{N}$, define the standard simplex $\Delta^{d-1} := \on{conv} \{\mbf{e}_1, \dots , \mbf{e}_d\}\subset \mb{R}^d$, where $\mbf{e}_i$ is the $i$-th standard basis vector in $\mb{R}^d$. 

We will denote the uniform probability distribution on a set $S$ by $\U(S)$. When clear from the context, we will abuse notation to denote the Lebesgue measure and volume of appropriate dimension of the set $S$ by $|S|$. 

\subsection*{Acknowledgments}

This research was completed at the MIT SPUR program during the summer of 2023. We extend our profound gratitude towards our mentor Mehtaab Sawhney, for his invaluable guidance at every step of this project, and to the professor David Jerison, for directing the program and insightful discussions. We also thank the anonymous reviewers for their suggestions.

\section{Reduction to a weight Function}\label{sec:reduction}

In this section, we prove \cref{thm:main} modulo \cref{prop:weight}. We first show the lower bound, and then the reduction for upper bound. The reduction works for arbitrary dimensions $d \geq 3$.  

\subsection{The lower bound}\label{sec:lower}

Throughout this paper, we will interchange the volume of a polytope with the number of its lattice points, owing to the following standard fact from the geometry of numbers.

\begin{fact}\label{fact:geometry-of-numbers}
Let $\mc{P}\subset [0, 1]^d$ be a closed polytope with nonempty interior. Let $\mc{P}^n = n\mc P \cap \mb{Z}^d$, then 
\begin{equation*}
	\left|\mc{P}^n\right| = (1 + O(n^{-1}))|n\mc{P}| = n^d |\mc{P}| + O\left(n^{d-1}\right).
\end{equation*} 
\end{fact}

We now formally record the lower bound construction mentioned in \cref{sec:history}. 

\begin{proposition}[Lower Bound]\label{prop:lower}
Fix dimension $d\in \mb{N}$, recall $u_d^*$, $c_d^*$, and $S_{d,n}^*$ from \ref{eq:cd*}. Then $S_{d,n}^* \subset [n]^d$ is a sum-free subset of size $c_d^*n^d +O(n^{d-1})$.
\end{proposition}

\begin{proof}
    If $x, y \in S_{d,n}^*$, then $\mbf{1}^\intercal(x + y) \geq 2u_d^*n$, so $x + y \notin S_{d,n}^*$. Hence, $S_{d,n}^*$ is sum-free. By \cref{fact:geometry-of-numbers} we obtain that $|S_{d,n}^*|= c_d^*n^d +O(n^{d-1})$.
\end{proof}

\subsection{The upper bound}\label{sec:upper}

We begin with the following key one-dimensional observation.

\begin{lemma}\label{lem:nd-lines}
Let $\pi :\mb R^d \to \mb R^{d-1}$ be a projection that forgets the last coordinate. 
Let $S \subset [n]^d$ be a sum-free set and $x, y, z \in [n]^{d-1}$ with $x + y =z$. Then,
\begin{equation}\label{eq:nd-lines-bd}
   |S \cap \pi^{-1}(x)| + |S \cap \pi^{-1}(y)| + |S \cap \pi^{-1}(z)| \leq 2n.
\end{equation}
\end{lemma}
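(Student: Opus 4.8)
Fix the sum-free set $S \subset [n]^d$ and the triple $x+y=z$ in $[n]^{d-1}$. Write $A = \{a \in [n] : (x,a) \in S\}$, $B = \{b \in [n] : (y,b) \in S\}$, and $C = \{c \in [n] : (z,c) \in S\}$, so that the left-hand side of \cref{eq:nd-lines-bd} is exactly $|A| + |B| + |C|$. The key point is that $S$ being sum-free forces an additive restriction among these three subsets of $[n]$: if $a \in A$ and $b \in B$ with $a + b \le n$, then $(x,a) + (y,b) = (z, a+b)$, and since $(x,a), (y,b) \in S$ we must have $(z,a+b) \notin S$, i.e.\ $a+b \notin C$. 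In other words, writing $A +_{\le n} B := \{a+b : a \in A,\, b \in B,\, a+b \le n\}$, we have $(A +_{\le n} B) \cap C = \varnothing$.

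\textbf{Reduce to a one-dimensional sumset inequality.} It therefore suffices to prove the following purely one-dimensional statement: for any $A, B, C \subseteq [n]$ with $(A +_{\le n} B) \cap C = \varnothing$, one has $|A| + |B| + |C| \le 2n+1$. This is where the bulk of the argument lies. I would argue by contradiction, assuming $|A| + |B| + |C| \ge 2n+2$. Since $C$ and $A +_{\le n} B$ are disjoint subsets of $[n]$, we get $|A +_{\le n} B| \le n - |C| \le |A| + |B| - n - 2$. On the other hand, I want a lower bound on $|A +_{\le n} B|$ of the form $|A| + |B| - n - 1$ or better, which would immediately yield the contradiction. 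This lower bound is a standard Cauchy--Davenport-type fact for the interval $[n]$: if $A, B \subseteq [n]$ are nonempty then $|A +_{\le n} B| \ge \min(n, |A| + |B| - 1)$. One clean way to see it: if $|A| + |B| - 1 \le n$, let $a_0 = \min A$ and $b_0 = \max B$; the sets $a_0 + B$ and $A + b_0$ both lie in $[n]$ (since $a_0 + b_0 \le n$ when... ) — more carefully, pick $a_0 = \min A$, $b_0 = \min B$, note $a_0 + b_0 \ge 2$ and one can translate to assume $a_0 = b_0 = 1$; then $(1 + B) \cup (A + 1)$ has size $\ge |A| + |B| - 1$ and, if this is $\le n$, lies inside $[n]$, so all these sums are genuine elements of $A +_{\le n} B$. (The case $|A|+|B|-1 > n$ needs a short separate argument showing $A +_{\le n} B = [\text{something}]$ is large; one shows $|A +_{\le n} B| \ge n$ whenever $|A| + |B| \ge n+1$ because then $A$ and $\{k - b : b \in B\}$ must intersect for every $k$ in a suitable range.) Combining $|A +_{\le n} B| \ge \min(n, |A|+|B|-1)$ with $|A +_{\le n} B| \le |A| + |B| - n - 2$ gives a contradiction in both cases.

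\textbf{Edge cases and the main obstacle.} I should handle degenerate cases first: if any of $A$, $B$, $C$ is empty, the bound $|A| + |B| + |C| \le 2n$ is trivial since the two nonempty ones lie in $[n]$. Also the $+1$ in $2n+1$ suggests the extremal configuration is something like $A = B = [n]$ and $C = \{1\}$ — wait, but then $A +_{\le n} B \supseteq \{2, \dots, n\}$, which doesn't contain $1$, and indeed $|A|+|B|+|C| = 2n+1$, confirming the bound is tight and that I need the lower bound $|A+_{\le n}B| \ge |A|+|B|-n-1$ rather than something stronger — so my target inequality above ($\ge |A|+|B|-n-1$) is exactly right and corresponds to $\min(n,|A|+|B|-1)$ in the relevant regime. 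The main obstacle is proving the truncated Cauchy--Davenport bound $|A +_{\le n} B| \ge \min(n, |A| + |B| - 1)$ cleanly; the truncation at $n$ makes the standard intervals argument slightly fiddly near the top of $[n]$, and I expect to spend most of the effort making that reduction rigorous (likely by reflecting $[n]$ and splitting into the two regimes $|A|+|B| \le n+1$ and $|A|+|B| \ge n+1$). Everything else — the translation from $S$ to $(A,B,C)$ and the disjointness of $C$ from $A +_{\le n} B$ — is immediate from the definition of sum-free.
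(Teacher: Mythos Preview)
Your overall reduction is sound: with $A,B,C\subseteq[n]$ as you define them, sum-freeness of $S$ gives $(A+_{\le n}B)\cap C=\varnothing$, and it suffices to show $|A|+|B|+|C|\le 2n+1$.  The gap is in the intermediate claim.  The bound
\[
|A+_{\le n}B|\ \ge\ \min\bigl(n,\,|A|+|B|-1\bigr)
\]
is simply false: take $A=B=[n]$, so that $A+_{\le n}B=\{2,\dots,n\}$ has size $n-1<n=\min(n,2n-1)$; or take $A=B=\{n\}$ with $n\ge 2$, where the truncated sumset is empty.  Your sketch of its proof is accordingly confused (the translation ``assume $a_0=b_0=1$'' is not available, and in the regime $|A|+|B|\ge n+1$ the asserted bound exceeds the trivial upper bound $n-1$).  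What you actually need, and correctly identify as the target, is the weaker
\[
|A+_{\le n}B|\ \ge\ |A|+|B|-n-1,
\]
and this is immediate once you stop truncating prematurely: by the integer Cauchy--Davenport bound $|A+B|\ge|A|+|B|-1$, together with $A+B\subseteq\{2,\dots,2n\}$ so that at most $n$ elements of $A+B$ exceed $n$, one gets $|A+_{\le n}B|=|A+B|-|(A+B)\cap\{n+1,\dots,2n\}|\ge(|A|+|B|-1)-n$.  Then disjointness of $C$ and $A+_{\le n}B$ inside $[n]$ gives $|C|\le n-(|A|+|B|-n-1)=2n+1-|A|-|B|$, which is exactly the lemma.

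For comparison, the paper's proof avoids sumsets entirely and is a few lines: if $C\ne\varnothing$, let $m=\max C$; for each $k\in[m-1]$ the identity $(x,k)+(y,m-k)=(z,m)\in S$ forces $k\notin A$ or $m-k\notin B$, giving $m-1$ exclusions from $A\cup B$, while $\max C=m$ gives $n-m$ exclusions from $C$, so $|A|+|B|+|C|\le 3n-(m-1)-(n-m)=2n+1$.  Your route is a genuine alternative---it packages the same combinatorics through $|A+B|\ge|A|+|B|-1$---but the paper's pairing argument is more direct and sidesteps the truncation issue that tripped you up.
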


\begin{proof}
If $S \cap \pi^{-1}(z) = \varnothing$, then the conclusion follows immediately. Otherwise, let $m \in [n]$ be the largest integer such that $(z, m) \in S$. Since $S$ is sum-free, either $(x,k) \notin S$ or $(y, m - k) \notin S$ for each $0 \leq k \leq m$. Therefore, $|S| \leq 3n - (m+1) - (n-1-m) = 2n$.
\end{proof}

Under the projection $\pi : \mb R^d \to \mb R^{d-1}$, the image of $S_{d,n}^*$ naturally partitions the cube into five regions as defined as follows.
\begin{equation}\label{eq:ABCDE}
\begin{aligned}
\mc{A} &:= \{v\in [0,1]^{d-1}:\mbf{1}^\intercal v \in [0, u_{d}^*-1)\}; \\
\mc{B} &:= \{v\in [0,1]^{d-1}:\mbf{1}^\intercal v \in [u_{d}^*-1, u_d^*)\};\\
\mc{C} &:= \{v\in [0,1]^{d-1}:\mbf{1}^\intercal v \in [u_d^*, 2u_d^*-1)\};\\
\mc{D} &:= \{v\in [0,1]^{d-1}:\mbf{1}^\intercal v \in [2u_d^*-1, 2u_d^*)\};\\
\mc{E} &:= \{v\in [0,1]^{d-1}:\mbf{1}^\intercal v \in [2u_d^*, d-1]\}.
\end{aligned}
\end{equation}
Furthermore, optimality of $u_d^*$ implies that $|\mc B| = 2|\mc D|.$ Moreover, for $d \in \set{3,4}$ and we have $u_d^* > 1$ by \cref{fact:ud-comp}, so the regions are disjoint\footnote{In fact, for any $d \geq 3$ we have $u_d^* > 1$.}. Recall that by convention $\mc X^n \triangleq n \mc X \cap \mb{Z}^{d-1}$ and define  
\begin{equation}\label{eq:Omega}
    \Omega \triangleq  \set{(x,y,z) \in (\mc{B}^n \times \mc{B}^n \times \mc{D}^n) \cup (\mc{A}^n \times \mc{C}^n \times \mc{C}^n) \cup (\mc{C}^n \times \mc{C}^n \times \mc{E}^n) : x + y =z }.
\end{equation}

\begin{lemma}\label{lem:opt-lines}
Let $S_{d,n}^*$ be as in \cref{eq:cd*}. For any $(x,y,z) \in \Omega$,
\begin{equation*}
      |S_{d,n}^* \cap \pi^{-1}(x)| + |S_{d,n}^*\cap \pi^{-1}(y)| + |S_{d,n}^* \cap \pi^{-1}(z)| \geq 2n-1.
\end{equation*}
\end{lemma}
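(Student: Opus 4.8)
The plan is to compute, for each of the three types of triples $(x,y,z) \in \calT^{(n)}$, the quantity $|S_d^* \cap \pi^{-1}(v)|$ exactly (up to additive $O(1)$) in terms of $\mbf{1}^\intercal v$. Recall from \cref{eq:nd-opt} that $S_d^* = \{w \in [n]^d : \mbf{1}^\intercal w \in [u_dn, 2u_dn)\}$, so for $v \in [n]^{d-1}$ the fiber count is
\[
\left|S_d^* \cap \pi^{-1}(v)\right| = \left|\set{m \in [n] : \mbf{1}^\intercal v + m \in [u_dn, 2u_dn)}\right| = \left|[n] \cap [u_dn - \mbf{1}^\intercal v,\ 2u_dn - \mbf{1}^\intercal v)\right|.
\]
Writing $s := \mbf{1}^\intercal v$, this fiber count equals $\big(\min(n, 2u_dn - s) - \max(1, u_dn - s)\big)$ clamped to be nonnegative, which is $n - \max(0, u_dn - s) - \max(0, s - (2u_d-1)n) + O(1)$. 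So the fiber count is $n + O(1)$ exactly when $u_dn - n \le s \le (2u_d-1)n$, i.e.\ when $v \in \calB \cup \calC$ (using $u_d > 1$); it is $(2u_dn - s) + O(1)$ when $s \ge (2u_d-1)n$ (i.e.\ $v \in \calD \cup \calE$); and it is $(s - (u_d-1)n) + O(1)$ when $s \le (u_d-1)n$ (i.e.\ $v \in \calA$). The linear relation $x + y = z$ gives $\mbf{1}^\intercal x + \mbf{1}^\intercal y = \mbf{1}^\intercal z$, and this is the identity that makes the three fiber counts sum to $2n + O(1)$.

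The three cases then reduce to straightforward arithmetic. For $(x,y,z) \in \calB^{(n)} \times \calB^{(n)} \times \calD^{(n)}$: the fibers over $x$ and $y$ each contribute $n + O(1)$, while the fiber over $z$ contributes $2u_dn - \mbf{1}^\intercal z + O(1) = 2u_dn - \mbf{1}^\intercal x - \mbf{1}^\intercal y + O(1)$; since $\mbf{1}^\intercal x, \mbf{1}^\intercal y < u_dn$ the fiber over $z$ is nonnegative, and the three contributions sum to $2n + (2u_dn - \mbf{1}^\intercal x - \mbf{1}^\intercal y) + O(1) \ge 2n + O(1)$ — actually I need to be careful: $2u_dn - \mbf{1}^\intercal x - \mbf{1}^\intercal y$ could be as large as $2u_dn - 2(u_d-1)n = 2n$, but it is at least $2u_dn - 2u_dn = 0$, so the sum is $\ge 2n - O(1)$, which is exactly what we want. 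For $(x,y,z) \in \calA^{(n)} \times \calC^{(n)} \times \calC^{(n)}$: the fibers over $y$ and $z$ contribute $n + O(1)$ each (both in $\calC$, so full), and the fiber over $x$ contributes $\mbf{1}^\intercal x - (u_d-1)n + O(1) \ge O(1) \ge 0$ up to the additive constant, giving a sum $\ge 2n - O(1)$. Symmetrically, for $(x,y,z) \in \calC^{(n)} \times \calC^{(n)} \times \calE^{(n)}$: the fibers over $x$ and $y$ are full, contributing $2n + O(1)$, and the fiber over $z$ contributes at least $0$, so the sum is $\ge 2n - O(1)$.

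The main technical care needed is tracking the $O(1)$ additive errors carefully enough to land on the clean bound $2n - 1$ rather than $2n - O(1)$; in particular I would want to verify the floor/ceiling boundary behavior directly rather than hiding it in asymptotic notation, since the statement is an exact inequality with the constant $1$. Concretely, for a fiber over $v$ with $s = \mbf{1}^\intercal v$, the count $|[n] \cap [u_dn - s, 2u_dn - s)|$ is an integer satisfying $\ge u_dn - s - 1$ on the relevant range (the length of the intersected interval is at least its continuous length minus $1$), and one checks the three length-sums add to exactly $2n$ using $\mbf{1}^\intercal x + \mbf{1}^\intercal y = \mbf{1}^\intercal z$, losing at most $3$ total from flooring — and then a slightly sharper bookkeeping (noting one of the three fibers is genuinely the full $n$ points, contributing no rounding loss, or re-deriving as in \cref{lem:nd-lines} by an explicit pairing) recovers the claimed $2n-1$. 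The structure is the mirror image of \cref{lem:nd-lines}: there the sum-free condition forced the fibers to be \emph{small}, here the explicit description of $S_d^*$ forces them to be \emph{large}, and the arithmetic is essentially the same inequality run in the opposite direction. I would also explicitly note that $\calT^{(n)}$ was defined precisely so that in each summand at least two of the three fibers are full (lie in $\calB \cup \calC$), which is why the bound holds; this is the "equality case analysis" foreshadowed in \cref{sec:sketch}.
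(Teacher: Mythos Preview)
Your approach is the same as the paper's --- compute the fiber counts $|S_d^*\cap\pi^{-1}(v)|$ in terms of $s=\mbf{1}^\intercal v$ and use $\mbf{1}^\intercal x+\mbf{1}^\intercal y=\mbf{1}^\intercal z$ --- but there is a concrete error in your region analysis. You assert that the fiber is full ($n+O(1)$) whenever $v\in\calB\cup\calC$, and you build the rest of the argument on the slogan ``at least two of the three fibers are full.'' This is false for $\calB$: if $v\in\calB^{(n)}$ then $s\in[(u_d-1)n,u_dn)$, so $u_dn-s\in(0,n]$ and the fiber $\{m\in[n]:m\ge u_dn-s\}$ has size $\approx s-(u_d-1)n$, which ranges over all of $[0,n)$. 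Only $\calC$ has genuinely full fibers. Likewise your formula ``$s-(u_d-1)n+O(1)$ when $v\in\calA$'' is off by one region: that is the $\calB$ formula, while the $\calA$ fiber is simply empty.

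Because of this, your handling of the $\calB^{(n)}\times\calB^{(n)}\times\calD^{(n)}$ case is not valid as written: none of the three fibers is full there. The paper treats this case by computing all three partial fibers explicitly,
\[
|S_d^*\cap\pi^{-1}(x)|\ge n-u_dn+\mbf{1}^\intercal x,\quad |S_d^*\cap\pi^{-1}(y)|\ge n-u_dn+\mbf{1}^\intercal y,\quad |S_d^*\cap\pi^{-1}(z)|\ge 2u_dn-\mbf{1}^\intercal z-1,
\]
and summing to get exactly $2n-1$ via $\mbf{1}^\intercal x+\mbf{1}^\intercal y=\mbf{1}^\intercal z$. This cancellation is precisely the mechanism you identified in your plan, but your case analysis skipped the one case where it is actually needed. (For the other two cases in $\calT^{(n)}$, two fibers really do lie in $\calC^{(n)}$ and contribute $n$ each, so the bound $\ge 2n$ is immediate --- that part of your argument is fine.) The exact constant $2n-1$ also drops out directly from the three displayed inequalities, with only the single $-1$ coming from the floor in the $z$-fiber; no further ``sharper bookkeeping'' is required once the $\calB$ fibers are handled correctly.
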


\begin{proof}
Suppose first that at least two elements of $\{x,y,z\}$ lie in $\mc{C}^n$, then the conclusion follows immediately as  $\pi^{-1}(\mc{C}^n) \subset S_{d,n}^*$. Otherwise, suppose $(x, y, z) \in \mc{B}^n \times \mc{B}^n \times \mc{D}^n$. Then,\begin{align*}
    |S_{d,n}^* \cap \pi^{-1}(x)| &= \fl{n - (u_d^*n -\mbf{1}^\intercal x)},  \\
    |S_{d,n}^* \cap \pi^{-1}(y)| &= \fl{n - (u_d^*n -\mbf{1}^\intercal y)}, \\
    |S_{d,n}^* \cap \pi^{-1}(z)| &= \ceil{2u_d^*n - \mbf{1}^\intercal z} .
\end{align*}
Hence, $ |S_{d,n}^* \cap \pi^{-1}(x)| +  |S_{d,n}^* \cap \pi^{-1}(y)| +  |S_{d,n}^* \cap \pi^{-1}(z)| \geq 2n-1$. 
\end{proof}

We will now state the key proposition, which will be proved in the following chapters.

\begin{proposition}[Existence of Weight Function]\label{prop:weight}
Let $d \in \{3,4\}$. There exists a function $w : \mb Z^{d-1} \times \mb Z^{d-1} \times \mb Z^{d-1} \to \mb R_{+}$ such that the following three conditions hold:
\begin{align}
\sum_{(x, y, z)\not\in \Omega} w(x,y,z) & = O\left(n^{d-3/2}\right)\label{eq:weight-1}, \\ 
\sum_{v\in \mc{A}^n \cup \mc{B}^n\cup \mc{D}^n\cup \mc{E}^n} \left| W(v) - 1 \right| & = O\left(n^{d-3/2}\right) \label{eq:weight-2},\\
\sum_{v\in\C} |W(v) - 1|_+&= O\left(n^{d-3/2}\right) \label{eq:weight-3},
\end{align}
where $|x|_+ = \max \set{x, 0}$ and $W:\mb{Z}^{d-1}\to \mb{R}_+$ is defined by
\begin{equation}\label{eq:def-W}
W(v) := \sum_{x,  y \in \mb{Z}^{d-1}}w(v, x, y)+w(x, v, y)+w(x, y, v).
\end{equation}
\end{proposition}

Roughly speaking, \cref{prop:weight} says up to lower order terms
\begin{enumerate}
	\item $w$ is supported on $\Omega$ 
	\item $W$ is identity on $\mc A \cup \mc B \cup \mc D \cup \mc E$
	\item $W$ is at most one on $\mc C$.
\end{enumerate} 

We now establish \cref{thm:main} assuming \cref{prop:weight}.

\begin{proof}[Proof of \cref{thm:main}.]

    Let $S \subset [n]^d$ be arbitrary sum-free set. For brevity, for each $v \in \mb{Z}^{d-1}$, define $\lambda(v)  \triangleq  |S \cap \pi^{-1}(v)|$ and $\lambda^*(v) \triangleq |S_{d,n}^* \cap \pi^{-1}(v)|$. By \cref{lem:nd-lines,lem:opt-lines},
    
    \begin{equation}\label{eq:ineq-triple}
        \lambda(x) + \lambda(y) + \lambda(z) \leq \lambda^*(x) + \lambda^*(y) + \lambda^*(z) + 1 \text{ for each }  (x,y,z) \in \Omega.
    \end{equation}
    
    Furthermore, for each $v \in \mc{C}^n$ we have $\lambda^*(v) = n$, so 
    \begin{equation}\label{eq:ineq-single}
    	 \lambda(v) \leq \lambda^*(v) \text{ for each } v \in \mc{C}^n
    \end{equation}

     We will combine \ref{eq:ineq-triple} and \ref{eq:ineq-single} with suitable weights to deduce \cref{thm:main}. Let $w$ and $W$ be as in \cref{prop:weight}. Since $\Omega\subset [n]^{d-1}\times [n]^{d-1}\times [n]^{d-1}$, we can apply property \ref{eq:weight-1} to get
     
    \begin{equation}\label{eq:app-w-a}
        \sum_{v \not\in [n]^{d-1}} W(v) \le 3 \times \sum_{(x, y, z)\not\in \Omega} w(x,y,z) = O\left(n^{d-3/2}\right).
    \end{equation}
    
Thus, the weight outside of $[n]^{d-1}$ is small. We also bound the weight in $\T$
as follows:
\begin{equation}\label{eq:app-w-a2}
\sum_{ (x,y,z) \in \Omega} w(x,y,z) \le \sum_{v\in [n]^{d-1}} W(v) \le n^{d-1}+\sum_{v\in [n]^{d-1}} \left|W(v) -1\right|_+ \le O\left(n^{d-1}\right).
\end{equation}

In the last step we partitioned $[n]^{d-1}$ into $\A\cup\B\cup\C\cup\D\cup\E$ and applied \ref{eq:weight-2} and \ref{eq:weight-3}. Next, we make the key observation
\begin{equation}
\begin{aligned}\label{eq:comp-sum-1}  
     \sum_{ x,y,z \in \mb{Z}^{d-1}} w(x,y,z) &(\lambda(x) + \lambda(y) + \lambda(z)) +  \sum_{v \in \mc{C}^n} (1-W(v)) \lambda(v) = 
    \\ & = \sum_{v \in \mb{Z}^{d-1}} W(v)\lambda(v) +  \sum_{v \in \mc{C}^n} (1-W(v)) \lambda(v)\\
    &= \sum_{v \notin [n]^{d-1}} W(v) \lambda(v) + \sum_{v \in [n]^{d-1} \setminus \mc{C}^n} (W(v)-1) \lambda(v) + \sum_{v \in [n]^{d-1}} \lambda(v) \\
    &= O\left(n^{d-1/2}\right) + \sum_{v \in [n]^{d-1}} \lambda(v).
\end{aligned}
\end{equation}

    In the last step, we applied \cref{eq:app-w-a,eq:weight-2} and the trivial bound $\lambda (v)\le n$. On the other hand, by \cref{eq:weight-3} we   bound
\begin{equation}\label{eq:comp-sum-2} 
\begin{aligned}
\sum_{v \in \mc{C}^n} (1-W(v)) \lambda(v) & = \sum_{v \in \mc{C}^n} |1-W(v)|_+\lambda(v) - \sum_{v \in \mc{C}^n} |W(v)-1|_+  \lambda(v) 
\\ & = \left( \sum_{v \in \mc{C}^n} |1-W(v)|_+\lambda(v) \right)-O\left(n^{d-1/2}\right).   
\end{aligned}   
\end{equation}
Combining \cref{eq:comp-sum-1,eq:comp-sum-2}, and applying the same computation to $S_{d,n}^*$ instead of $S$, we conclude
     \begin{align*}
         |S| &= \sum_{v \in [n]^{d-1}} \lambda(v) \\
        &=O\left(n^{d-1/2}\right) + \sum_{ (x,y,z) \in \Omega} w(x,y,z) (\lambda(x) + \lambda(y) + \lambda(z)) + \sum_{v \in \mc{C}^n} |1-W(v)|_+ \lambda(v) \\
        &\leq O\left(n^{d-1/2}\right) + \sum_{ (x,y,z) \in \Omega} w(x,y,z) (\lambda^*(x) + \lambda^*(y) + \lambda^*(z)+1) + \sum_{v \in \mc{C}^n} |1-W(v)|_+ \lambda^*(v) \\
        &=  O\left(n^{d-1/2}\right) + \sum_{ (x,y,z) \in \Omega} w(x,y,z) + \sum_{v \in [n]^{d-1}} \lambda^*(v) \\
        &= O\left(n^{d-1/2}\right) + |S_{d,n}^*|.
    \end{align*}
The inequality step holds by \cref{eq:ineq-triple} and \cref{eq:ineq-single}.
\end{proof}

Note that the above reduction to \cref{prop:weight} works in arbitrary dimensions $d\geq 3$. Hence, the missing piece to arbitrary dimensions is only the weigh function in higher dimensions. In following chapters, we verify the \cref{prop:weight} for dimensions $3$ and $4$.

\section{Joint Mixability and Compatibility}
In this section we define joint mixability and compatibility. These notions address whether it is possible to couple distributions while maintaining certain joint additive structure.

Such couplings are not new to  additive combinatorics. Peabody \cite{peabody} demonstrated the joint mixability of certain distributions to prove the conjecture of Kleinberg-Speyer-Sawin \cite{kss} on tri-colored sum-free sets. Following this, Lov\'{a}sz and Sauermann \cite{LS19} studied joint mixability in the context of $k$-multicolored sum-free sets in $\mb{Z}_m^n$. Joint mixability also arises in empirical risk and optimization problems \cite{WangWang,MR3385976}.

\begin{definition}[\cite{peabody,WangWang,MR3385976}]\label{def:jm}
We say that a triple of distributions $(\mu_1, \mu_2, \mu_3)$ on $\mb{R}^d$ is
\begin{itemize}
\item \emph{Jointly mixable} if there exists a coupling $(X_1, X_2, X_3)$ with marginals $X_i\sim \mu_i$ for  each $i\in [3]$, such that $X_1+X_2+X_3 = \mb{E}(X_1+X_2+X_3)$ almost surely. 
	\item \emph{Compatible} if there exists a coupling $(X_1, X_2, X_3)$ with marginals $X_i\sim \mu_i$ for each $i\in [3]$, such that $X_1+X_2=X_3$;
	\end{itemize}
We say that the coupling $(X_1, X_2, X_3)$ \emph{witnesses} joint mixability / compatibility.\end{definition}

In the context of our problem, we will prove compatibility of uniform distributions on certain polytopes. We will then use the probability density function of the couplings to construct the weight function and prove \cref{prop:weight}. In the proposition below, we summarize the known sufficient conditions for joint mixability / compatibility.

\begin{proposition}\label{prop:WW}
Let $(\mu_1, \mu_2, \mu_3)$ be piecewise continuous probability density functions on $\mb{R}$. Let $a_i \triangleq  \inf \on{supp} \mu_i$, $b_i \triangleq \sup \on{supp} \mu_i$, and $c_i \triangleq  b_i-a_i$ . Then, $(\mu_1, \mu_2, \mu_3)$ are jointly mixable if any of the following conditions hold.
\begin{enumerate} 
\item {\cite[Theorems 3.2] {WangWang}} For each $1 \leq i \leq 3$, function $\mu_i$ is decreasing on $[a_i, b_i]$ and
\begin{equation}\label{eq:mean-ineq}
\max_{i \in [3]} c_i +\sum_{i=1}^3a_i\le\sum_{i=1}^3\mb{E}\mu_i\le -\max_{i \in [3]} c_i+\sum_{i=1}^3b_i.
\end{equation}
\item {\cite[Theorem 3.1] {WangWang}} 
For each $1 \leq i \leq 3$, distribution $\mu_i = \U([a_i, b_i])$ and
\begin{equation*}
	c_1+c_2+c_3\le 2\max(c_1, c_2, c_3)
\end{equation*}
\item {\cite[Theorem 3.4] {WangWang}} 
For each $1 \leq i \leq 3$, distribution $\mu_i$ is symmetric unimodal with zero mode. Moreover, for all $K \in (0, 1/2)$ we have  
\begin{equation*}
	\ell_1(K)+\ell_2(K)+\ell_3(K)\le 2\max(\ell_1(K), \ell_2(K), \ell_3(K)),
\end{equation*}
where $\ell_i(K)\geq 0$ is unique real number defined by
\begin{equation}\label{eq:dagger}
\int_0^{\ell_i(K)}\mu_i(x) - \mu_i(\ell_i(K)) \, dx=K.
\end{equation}
\end{enumerate}
\end{proposition}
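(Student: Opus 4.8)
\emph{Proof strategy.} The three listed conditions are theorems of Wang and Wang, so I describe the constructions one would reconstruct. In every case the displayed hypothesis is a specialization of the \emph{mean condition} \cref{eq:mean-ineq}, which is \emph{necessary} for joint mixability by an elementary confinement argument: if $(X_1,X_2,X_3)$ witnesses joint mixability then $X_1+X_2+X_3=m:=\sum_i\EE\mu_i$ almost surely, so $X_i=m-\sum_{j\ne i}X_j$ is confined to an interval of length $\sum_{j\ne i}c_j$, which must contain the support $[a_i,b_i]$ of $\mu_i$; writing out the three resulting inclusions gives exactly \cref{eq:mean-ineq}. The content is the converse: under each structural hypothesis the mean condition is \emph{sufficient}, and this is proved by exhibiting the witnessing coupling. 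Throughout I use that joint mixability of $(\mu_1,\mu_2,\mu_3)$ is equivalent, via $X_3\mapsto m-X_3$, to compatibility, and that it is unchanged by translating the supports, so I translate each $a_i$ to $0$.

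\textbf{The uniform case.} After translation we must couple $X_i\sim\U([0,c_i])$ with $X_1+X_2+X_3\equiv\tfrac12(c_1+c_2+c_3)$. For uniform laws both inequalities in \cref{eq:mean-ineq} reduce to the single triangle inequality $\max_i c_i\le\sum_j c_j-\max_i c_i$; relabelling so that $c_3=\max_i c_i$ this reads $c_3\le c_1+c_2$, equivalently $c_1=a+b,\ c_2=a+c,\ c_3=b+c$ for some $a,b,c\ge0$. I would realize the coupling as an explicit piecewise-linear (``sawtooth'') map of a single $\U([0,1])$ random variable into the three intervals, using the $(a,b,c)$-decomposition to organize the pieces, and then verify by a finite computation that each marginal is uniform; alternatively one adapts the classical construction witnessing the complete mixability of the uniform distribution. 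The triangle inequality is exactly what makes the target constant $\tfrac12\sum_i c_i$ feasible, i.e. what ensures each $X_i$ can sweep all of $[0,c_i]$.

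\textbf{The monotone and symmetric-unimodal cases.} The monotone case is the technical core. The plan is to construct explicit measure-preserving maps $f_i\colon([0,1],\mathrm{Leb})\to([a_i,b_i],\mu_i)$ with $f_1+f_2+f_3$ constant, by an iterative ``peeling'' procedure whose elementary step is the uniform construction above: monotonicity of the $\mu_i$ lets one repeatedly strip off a common uniform slab, reducing to strictly simpler densities, settle step densities by induction on the combinatorial complexity, and then pass to a weak limit for general decreasing $\mu_i$ (couplings form a weakly compact set and joint mixability is closed there). The mean condition \cref{eq:mean-ineq} enters quantitatively: it is precisely what guarantees that at every stage of the peeling the running partial sums leave enough room for the pieces not yet placed to land inside their prescribed supports, so the induction never gets stuck. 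For the symmetric-unimodal case one exploits reflection symmetry about the common mode $0$: the law of $|X_i|$ is a decreasing density on $[0,\sup\on{supp}\mu_i]$, and for a level $K\in(0,\tfrac12)$ the quantity $\ell_i(K)$ of \cref{eq:dagger} is the effective support radius of the sub-problem obtained by conditioning on the ``cap of mass $K$''; the displayed inequality on the $\ell_i(K)$ is the mean condition for that sub-problem, so by the monotone case each conditional sub-coupling is jointly mixable, and superposing these over $K$ with an independent uniform choice of signs recovers $\mu_1,\mu_2,\mu_3$ as marginals with the total sum still constant.

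The main obstacle is the monotone case — not the single-step uniform construction, but the bookkeeping needed to glue the peeled pieces (across the induction, and then in the weak limit) into one coupling whose marginals are \emph{exactly} $\mu_1,\mu_2,\mu_3$. This is where \cref{eq:mean-ineq} is used as a genuine quantitative ingredient rather than a mere necessary constraint, and the same difficulty is inherited by the symmetric-unimodal case through the conditioning on cap levels.
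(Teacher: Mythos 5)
The paper does not prove this proposition: it is a verbatim import of Theorems 3.1, 3.2 and 3.4 of Wang--Wang, cited as such, so there is no in-paper argument to compare yours against. Judged on its own, your write-up is a reasonable outline of the Wang--Wang proofs but not a proof. The necessity half (which the proposition does not even assert) is complete and correct. For sufficiency, which is the actual content, only the uniform case comes close to being checkable, and even there you defer the verification of the marginals of the sawtooth coupling to ``a finite computation'' that is not performed. The monotone case, which you rightly identify as the core, is left entirely as a plan: the peeling induction, the claim that \cref{eq:mean-ineq} keeps the induction from getting stuck, and the weak-limit/closedness argument are asserted rather than carried out, and these are exactly the nontrivial parts of Wang--Wang's Theorem 3.1. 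Similarly, in the symmetric-unimodal case the ``superposition over cap levels $K$'' requires a measurable parametrization of the conditional couplings in $K$ before one can integrate them into a single joint law; this is not automatic. So if the goal was a self-contained replacement for the citation, there are genuine gaps; if the goal was to explain why the cited results are plausible and how they fit together, it does that.

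One concrete point worth flagging either way: you prove joint mixability of uniforms under the triangle inequality $c_1+c_2+c_3\ge 2\max_i c_i$, which is the correct Wang--Wang condition and the one the paper actually verifies when it applies part (2) in \cref{lem:2coupling} and part (3) in \cref{sec:mathematica}. The proposition as printed states the reversed inequality $c_1+c_2+c_3\le 2\max_i c_i$, under which the conclusion is false: for instance $(c_1,c_2,c_3)=(2,\tfrac12,\tfrac12)$ satisfies it, yet $X_2+X_3$ is then confined to an interval of length $1<2=c_1$, so $X_1=m-X_2-X_3$ cannot have full support. The same reversal appears in condition (3). Your sketch silently corrects this sign error in the statement; it should be corrected explicitly.
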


\section[The Coupling Supported on $\mc{B}\times \mc{B}\times \mc{D}$]{The Coupling Supported on \texorpdfstring{$\mc{B}\times \mc{B}\times \mc{D}$}{B x B x D}}\label{sec:key}
In this section we will show compatibility of $(\U(\mc B), \U(\mc B), \U(\mc D))$ as summarized below.

\begin{proposition}\label{prop:BBD}
Let $d\in \{3,4\}$. There exist random vectors $(X,Y,Z)$ such that marginally $X\sim \U(\mc{B})$, $Y\sim \U(\mc{B}),$ and $Z\sim \U(\mc{D})$, and jointly $X+Y = Z$ almost surely. 
\end{proposition}

\subsection[Construction for $d=3$]{Construction for \texorpdfstring{$d=3$}{d = 3}}\label{sec:3-key}\begin{proof}[{Proof of \cref{prop:BBD} with $d=3$}]
Relabel $\mc{X}=\mc{Y}= \mc B$ and $\mc{Z}=\mc D$. Let us reparametrize the regions as
\[ \mc{Z} = \on{conv}\left\{(1, 2u_3^*-2), (2u_3^*-2, 1), (1,1)\right\}\]
and partition $\mc{X}=\bigcup_{i=0}^3 \mc{X}_i = \mc{Y}=\bigcup_{i=0}^3 \mc{Y}_i$ by defining
 \begin{align*}
 &
 \begin{aligned}
 	\mc{X}_0 &=  \on{conv}\left\{(0, u_3^* -1), (u_3^* -1,1), (1,0)\right\}, \\
 	\mc{X}_1 &=  \on{conv}\left\{(0, u_3^* -1), (u_3^* -1,0), (1,0)\right\}, \\
 	\mc{X}_2 &=  \on{conv}\left\{(1, u_3^* -1), (u_3^* -1,1), (1,0)\right\}, \\
 	\mc{X}_3 &=  \on{conv}\left\{(0, u_3^* -1), (u_3 -1,1), (1,0)\right\}, \\
 \end{aligned} 
 \begin{aligned}
 	\mc{Y}_0 &=  \on{conv}\left\{(1, u_3^* -1), (u_3^* -1,0), (0,1)\right\}, \\
 	\mc{Y}_1 &=  \on{conv}\left\{(1, u_3^* -1), (u_3^* -1,1), (0,1)\right\}, \\
 	\mc{Y}_2 &=  \on{conv}\left\{(0, u_3^* -1), (u_3^* -1,0), (0,1)\right\}, \\
 	\mc{Y}_3 &=  \on{conv}\left\{(1, u_3^* -1), (u_3^* -1,0), (0,1)\right\}. \\
 \end{aligned}
 \end{align*}

Observe that we can match vertices of the same colors, and extend by convex combinations. For each $\lambda = (\lambda_1, \lambda_2, \lambda_3) \in \Delta^2$, we let $Z(\lambda) \triangleq \lambda_1(1, 2u_3^*-2) + \lambda_2(2u_3^*-2, 1) + \lambda_3(1,1)$ and
 \begin{align*}
 &
 \begin{aligned}
 	X_0(\lambda) &\triangleq  \lambda_1(0, u_3^* -1) + \lambda_2(u_3^* -1,1) + \lambda_3(1,0),\\
 	X_1(\lambda) &\triangleq \lambda_1(0, u_3^* -1) + \lambda_2(u_3^* -1,0) + \lambda_3(1,0),\\
 	X_2(\lambda) &\triangleq \lambda_1(1, u_3^* -1) + \lambda_2(u_3^* -1,1) + \lambda_3(1,0),\\
 	X_3(\lambda) &\triangleq \lambda_1(0, u_3^* -1) + \lambda_2(u_3^* -1,1) + \lambda_3(1,0),\\
 \end{aligned} 
 &&
 \begin{aligned}
 	Y_0(\lambda) &\triangleq \lambda_1(1, u_3^* -1) + \lambda_2(u_3^* -1,0) + \lambda_3(0,1),\\
 	Y_1(\lambda) &\triangleq \lambda_1(1, u_3^* -1) + \lambda_2(u_3^* -1,1) + \lambda_3(0,1),\\
 	Y_2(\lambda) &\triangleq \lambda_1(0, u_3^* -1) + \lambda_2(u_3^* -1,0) + \lambda_3(0,1),\\
 	Y_3(\lambda) &\triangleq \lambda_1(1, u_3^* -1) + \lambda_2(u_3^* -1,0) + \lambda_3(0,1).\\
 \end{aligned}
 \end{align*}

\begin{tikzpicture}[scale=3]  
  % define u
  \pgfmathsetmacro{\u}{(15-sqrt(15))/10}

  % First figure
  \begin{scope}[shift={(0,0)}]
  
  % draw the square
  \draw[thick] (0,0) -- (1,0) -- (1,1) -- (0,1) -- cycle; 
  
  % Fill the region E with light grey
  \fill[gray!25] (0,\u-1) -- (\u - 1,0) -- (1,0) -- (1, \u - 1) -- (\u-1,1) -- (0,1) -- cycle; 

  % define vertices
  \coordinate (B) at (\u-1,0);
  \coordinate (C) at (0,\u-1);
  \coordinate (D) at (1,0);
  \coordinate (E) at (1,\u-1);
  \coordinate (F) at (\u-1,1);
  \coordinate (G) at (0,1);

  % Draw additional lines
  \draw[thick] (C) -- (D) -- (F) -- cycle;
  \draw[thick] (B) -- (D) -- (E) -- (F) -- (G) -- (C) -- cycle;  
  
   % Label regions
  \node at ({(0.35}, {0.35}) {$\mc{X}_0$};
  \node at ({0.5}, {-0.1}) {$\mc{X}_1$};
  \node at ({0.65}, {0.65}) {$\mc{X}_2$};
  \node at ({-0.11}, {0.5}) {$\mc{X}_3$};
  
  % Label vertices

  \node[circle,fill=green!50!black,inner sep=2pt] at (C) {}; %\node[above left] at 
  \node[circle,fill=blue,inner sep=2pt] at (D) {}; %\node[below right] at (D) 
  \node[circle,fill=red,inner sep=2pt] at (F) {}; %\node[above right] at (F) {$(u-
\end{scope}

  % Second figure
\begin{scope}[shift={(2,0)}] 
  
  % draw the square
  \draw[thick] (0,0) -- (1,0) -- (1,1) -- (0,1) -- cycle; 

  % Fill the region E with light grey
  \fill[gray!25] (0,\u-1) -- (\u - 1,0) -- (1,0) -- (1, \u - 1) -- (\u-1,1) -- (0,1) -- cycle; 

  % define vertices
  \coordinate (B) at (\u-1,0);
  \coordinate (C) at (0,\u-1);
  \coordinate (D) at (1,0);
  \coordinate (E) at (1,\u-1);
  \coordinate (F) at (\u-1,1);
  \coordinate (G) at (0,1);

  % Draw additional lines
  \draw[thick] (B) -- (E) -- (G) -- cycle;
  \draw[thick] (B) -- (D) -- (E) -- (F) -- (G) -- (C) -- cycle;
  
  % Label regions
  \node at ({(0.35}, {0.35}) {$\mc{Y}_0$};
  \node at ({0.5}, {-0.1}) {$\mc{Y}_3$};
  \node at ({0.65}, {0.65}) {$\mc{Y}_1$};
  \node at ({-0.11}, {0.5}) {$\mc{Y}_2$};

  % Label vertices
  \node[circle,fill = red,inner sep=2pt] at (B) {};
  \node[circle,fill = green!50!black,inner sep=2pt] at (E) {}; 
  \node[circle,fill = blue,inner sep=2pt] at (G) {};

\end{scope}

  % Third figure
  \begin{scope}[shift={(4,0)}]
  % draw the square
  \draw[thick] (0,0) -- (1,0) -- (1,1) -- (0,1) -- cycle; 

  % Fill the region P with light grey
  \fill[gray!25] (2*\u-2, 1) -- (1,1) -- (1,2*\u-2) -- cycle; 

  % define vertices
  \coordinate (H) at (1,2*\u-2);
  \coordinate (I) at (2*\u-2,1);
  \coordinate (J) at (1,1);
	
  % Draw additional lines
  \draw[thick] (H) -- (I) -- (J) -- cycle;
  
  % Label regions
  \node at ({0.75, 0.75}) {$\mc{Z}$};

  % Label vertices
  \node[circle,fill = green!50!black,inner sep=2pt] at (H) {}; 
  \node[circle,fill = red,inner sep=2pt] at (I) {}; 
  \node[circle,fill = blue,inner sep=2pt] at (J) {}; 
  \end{scope}
\end{tikzpicture}
 
Observe that $X_i(\lambda) + Y_i(\lambda) = Z_i(\lambda)$. We define the  coupling $(X, Y, Z)$ as follows:
\begin{itemize}
	\item[-] Sample coefficients $\lambda \sim \U\left(\Delta^2\right);$
	\item[-] Independently sample index $I\in \{0, 1, 2, 3\}$ with $\mb{P}(I=i) \propto |\mc X_i | = |\mc{Y}_i|;$
	\item[-] Set $(X, Y, Z)=(X_I(\lambda), Y_I(\lambda), Z(\lambda))$
\end{itemize}
Clearly, $X+Y=Z$ almost surely and the marginals are uniform on their domains.
\end{proof}

\subsection[Construction for $d=4$]{Construction for \texorpdfstring{$d=4$}{d = 4}}\label{sec:4-key} Now we will take a different strategy constructing coupling  coordinate-by-coordinate using \cref{prop:WW}.
\begin{definition}\label{def:one-dim-distr}
 Let $X_1, X_2, X_3, X_4 \sim \U ([0, 1])$ be independent and identically distributed.
 \begin{itemize}
	\item Define $\mu_X = \mu_Y$ to be the conditional probability density functions of 
\[ X_1+X_2 \mid \{X_1+X_2+ X_3 + X_4= u_4^*\}.\]
	\item Define $\mu_Z$ to be the conditional probability density function of 
\[ X_1+X_2 \mid \{X_1+X_2+ X_3 + X_4= 2u_4^*\}.\]
\end{itemize}
\end{definition}

Now we present the key technical lemma.
\begin{lemma}\label{lem:gabc-jm}
$(\mu_X,\mu_Y,\mu_Z)$ are compatible.
\end{lemma}
\begin{proof}[Proof Sketch]
Observe that $\mu_X$, $\mu_Y$, and $\mu_Z$ are all symmetric and uni-modal. Therefore, it suffices to verify the third condition given in \cref{prop:WW}, which we defer to \cref{sec:mathematica}.
\end{proof}

We now main coupling lemma about critical slices of $[0, 1]^4$.
Define $\mc{P}^d_t = \set{v \in [0,1]^d : 1^\intercal  v = t }.$ 

\begin{lemma}\label{lem:4abc-coupling}
$(\U (\mc P^4_{u_4^*}), \U (\mc P^4_{u_4^*}), \U (\mc P^4_{2u_4^*}))$ are compatible.
\end{lemma}
\begin{proof}
By \cref{lem:gabc-jm}, triple  $\left(\mu_X,\mu_Y,\mu_Z\right)$ is compatible with witnesses $(\alpha, \beta, \gamma)$. This means that $\alpha+\beta = \gamma$ and $(u_4^*-\alpha)+(u_4^*-\beta)=2u_4^*-\gamma$ almost surely. By the second condition of \cref{prop:WW},
\begin{equation*}
	(\U (\mc P^2_{\alpha}), \U (\mc P^2_{\beta}), \U (\mc P^2_{\gamma}))\quad\text{and}\quad (\U (\mc P^2_{u_4^*-\alpha}), \U (\mc P^2_{u_4^*-\beta}), \U (\mc P^2_{2u_4^*-\gamma})).
\end{equation*}
 are both compatible with witnesses $((X_1, X_2), (Y_1, Y_2), (Z_1, Z_2))$ and $((X_3, X_4), (Y_3, Y_4), (Z_3, Z_4))$, respectively. We can now form the coupling 
\[(X, Y, Z) = \left((X_1, X_2, X_3, X_4), (Y_1, Y_2, Y_3, Y_4), (Z_1, Z_2, Z_3, Z_4)\right).\]
Observe that by definition of $\mu_X,\mu_Y, \mu_Z$, the marginals are precisely $\U (\mc P^4_{u_4^*})$, $\U (\mc P^4_{u_4^*})$, and $\U (\mc P^4_{2u_4^*})$. Moreover, by construction $X_i+Y_i=Z_i$ almost surely for each $1 \leq i \leq 4$, so $(X, Y, Z)$ witnesses the claimed compatibility.
\end{proof}

\begin{proof}[{Proof of \cref{prop:BBD} with $d=4$}] By \cref{lem:4abc-coupling}, the triple $(\U (\mc P^4_{u_4^*}), \U (\mc P^4_{u_4^*}), \U (\mc P^4_{2u_4^*}))$ are compatible. Note that $\mc B = \pi (\mc P^4_{u_4^*})$ and $\mc D = \pi (\mc P^4_{2u_4^*})$. Since the projection $\pi$ preserves compatibility, we are done. 
\end{proof}

\section[The Couplings Supported on $\mc{A}\times\mc{C}\times\mc{C}$ and  $\mc{C}\times\mc{C}\times\mc{E}$]{The Couplings Supported on \texorpdfstring{$\mc{A}\times\mc{C}\times\mc{C}$}{A x C x C} and  \texorpdfstring{$\mc{C}\times\mc{C}\times\mc{E}$}{C x C x E}}\label{sec:leftover}
We construct the couplings supported on $\mc{A}\times\mc{C}\times\mc{C}$ and $\mc{C}\times\mc{C}\times\mc{E}$ in the continuous case. It turns out that the following proposition is sufficient for \cref{prop:weight}. The constructions will have marginals uniform on some simplices. Observe that in $d = 3$ the region $\mc{A}$ is a simplex and $\mc{E}$ is empty, while for $d =4$ both are simplices.

\begin{proposition}\label{prop:cont-leftover}
The following are true 
\begin{itemize}
    \item For $d=3$, there are simplices $\mc{X}_1, \mc{Y}_1, \mc{Z}_1\subset [0, 1]^{d-1}$ such that 
\begin{equation}\label{eq:leftover-regions-3}
\mc{X}_1=\mc{A} \text{, and }  \mc{Y}_1, \mc{Z}_1\subset \mc{C} \text{ satisfies } \mc{Z}_1^\circ\cap \mc{Y}_1^\circ = \varnothing,
\end{equation}
triple $\left(\U\left(\mc{X}_1\right), \U\left(\mc{Y}_1\right), \U\left(\mc{Z}_1\right)\right)$ is compatible, and that
\begin{equation}\label{eq:cont-room-c-3}
\frac{|\mc{X}_1|}{|\mc{Z}_1|} < 1 \quad\text{and}\quad \frac{|\mc{X}_1|}{|\mc{Y}_1|} \leq 1.
\end{equation}

\item For $d=4$, there are simplices $\mc{X}_1, \mc{Y}_1, \mc{Z}_1, \mc{X}_2, \mc{Y}_2, \mc{Z}_2\subset [0, 1]^{d-1}$ such that 
\begin{equation}\label{eq:leftover-regions}
\mc{X}_1=\mc{A}, \mc{Z}_2=\mc{E} \text{, and }  \mc{Y}_1, \mc{Z}_1, \mc{X}_2, \mc{Y}_2\subset \mc{C} \text{ satisfies } \mc{Z}_1^\circ \cap\left(\mc{Y}_1\cup \mc{X}_2\cup \mc{Y}_2\right)^\circ = \varnothing,
\end{equation}
triples $\left(\U\left(\mc{X}_1\right), \U\left(\mc{Y}_1\right), \U\left(\mc{Z}_1\right)\right)$ and $\left(\U\left(\mc{X}_2\right), \U\left(\mc{Y}_2\right), \U\left(\mc{Z}_2\right)\right)$ are compatible, and that
\begin{equation}\label{eq:cont-room-c}
\frac{|\mc{X}_1|}{|\mc{Z}_1|} < 1 \quad\text{and}\quad \frac{|\mc{X}_1|}{|\mc{Y}_1|}+\frac{|\mc{Z}_2|}{|\mc{X}_2|}+\frac{|\mc{Z}_2|}{|\mc{Y}_2|} < 1.
\end{equation}
\end{itemize}
\end{proposition}

\subsection{Parametrizing simplices}
All the $d$-dimensional simplices we seek will have one vertex lying on the line spanned by all-ones vector $\mbf{1}\in \mb{R}^d$, and the other $d$ vertices have coordinates that are cyclic permutations of each other. We parametrize them as follows: for $t \in [0, d]$ and $v\in [0, 1]^d$, let
\begin{equation}\label{eq:sigma}
\Sigma_{d}(t , v):= \on{conv}\left\{\frac{t  \mbf{1}}{d}, \;\sigma^{i}_{d}(v):0\le i\le d-1 \right\}\subset [0, 1]^d,
\end{equation}
where $\sigma_{d}: (v_1, \dots, v_{d})\mapsto (v_2, \dots, v_{d}, v_1)$ and the exponent $i$ denotes the $i$-fold composition. 
We begin with the following lemma that computes volumes of such simplices up to a fixed constant $C_d$.

\begin{fact}\label{lem:vol-sigma}
For $d \in \set{2,3}$, there exists some constant $C_{d}>0$ such that
\begin{equation}\label{eq:vol-sigma}
\on{vol}_{d}\Sigma_{d}(t , v) = C_{d} \left|t -\mbf{1}^\intercal v\right|\cdot  \left\Vert v-\sigma_{d}(v)\right\Vert_2^{d-1}.
\end{equation} 
\end{fact}
\begin{proof}
We view $\mathrm{conv}\{ \sigma_{d}^i(v)\}_{i=0}^{d-1}$ as the base of the simplex. In $d = 2$ this is a segment and in $d = 3$ equilateral triangle, both lying in affine hyperplane perpendicular to $\mbf{1}$ centered on $\mbf{1}$. Furthermore, $\sigma_d$ rotates the base around $\mbf{1}$ with $180^\circ$ in $d = 2$ and $120^\circ$ in $d = 3$. 
For this reason, the area of the base is proportional to $ \left\Vert v-\sigma_{d}(v)\right\Vert_2^{d-1}$, while the height is proportional to $|t - 1^\intercal v|$.
\end{proof}

We give a sufficient condition for the compatibility of uniform distributions on such simplices.

\begin{lemma}\label{lem:jm-simplices}
For any $a, b, c \in [0, d]$ and $x, y, z\in [0, 1]^d$ such that $a+b=c$ and $x+y=z$, the distributions
$\left(\U\left(\Sigma_d(a, x)\right), \U\left(\Sigma_d(b, y)\right), \U\left(\Sigma_d(c, z)\right)\right)$ are compatible.
\end{lemma}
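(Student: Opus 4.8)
The plan is to build the coupling explicitly using the ``decompositions into simplices'' idea from \cref{sec:sketch}. The key observation is that the three simplices $\Sigma_d(a,x)$, $\Sigma_d(b,y)$, $\Sigma_d(c,z)$ each have $d+1$ vertices, and there is a natural bijection between their vertex sets that respects the additive relation. Concretely, the ``apex'' vertices satisfy $\tfrac{a\mbf 1}{d} + \tfrac{b\mbf 1}{d} = \tfrac{c\mbf 1}{d}$ since $a+b=c$, and for each $i$ with $0 \le i \le d-1$ the cyclic-shift vertices satisfy $\sigma_d^i(x) + \sigma_d^i(y) = \sigma_d^i(z)$, because $\sigma_d$ is linear and $x+y=z$. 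Thus, writing the vertices of the three simplices as $(p_0,\dots,p_d)$, $(q_0,\dots,q_d)$, $(r_0,\dots,r_d)$ under this matching, we have $p_j + q_j = r_j$ for every $j \in \{0,1,\dots,d\}$.

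Given this, I would define $(X,Y,Z)$ as follows: sample $\mu = (\mu_0,\dots,\mu_d)$ uniformly from the standard simplex $\Delta^d = \on{conv}\{\mbf e_1,\dots,\mbf e_{d+1}\} \subset \RR^{d+1}$, and set $X = \sum_{j=0}^d \mu_j p_j$, $Y = \sum_{j=0}^d \mu_j q_j$, and $Z = \sum_{j=0}^d \mu_j r_j$. Then $X + Y = \sum_j \mu_j (p_j + q_j) = \sum_j \mu_j r_j = Z$ holds identically, so the defining equation is satisfied almost surely (in fact surely). It remains to check the marginals. Since $X$ is the image of the uniform distribution on $\Delta^d$ under the affine map sending $\mbf e_{j+1} \mapsto p_j$, and this map sends $\Delta^d$ onto $\Sigma_d(a,x) = \on{conv}\{p_0,\dots,p_d\}$, the pushforward is a multiple of Lebesgue measure on $\Sigma_d(a,x)$ provided the map is nondegenerate, i.e. provided $\Sigma_d(a,x)$ is a genuine $d$-dimensional simplex; a uniform distribution on a degenerate (lower-dimensional) simplex can be handled by a limiting argument or simply reinterpreted as the uniform distribution on that lower-dimensional set. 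Hence $X \sim \U(\Sigma_d(a,x))$, and identically $Y \sim \U(\Sigma_d(b,y))$ and $Z \sim \U(\Sigma_d(c,z))$.

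The main point requiring a word of care — and the only potential obstacle — is the nondegeneracy issue: the affine map $\mbf e_{j+1}\mapsto p_j$ pushes $\U(\Delta^d)$ to $\U(\Sigma_d(a,x))$ exactly when $\Sigma_d(a,x)$ is full-dimensional, and similarly for the other two. When some of the simplices degenerate (for instance if $a = \mbf 1^\intercal x$, so the apex lies in the affine hull of the other vertices, or if $x = \sigma_d(x)$), the statement should be read with $\U$ of a lower-dimensional simplex denoting the uniform probability measure on that face; the same coupling still works because affine maps commute with taking pushforwards and the relation $X+Y=Z$ is unaffected. So after disposing of this bookkeeping, the lemma follows immediately from the explicit coupling. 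I would present the full-dimensional case as the main argument and remark that the degenerate cases follow by the same formula (or by continuity).
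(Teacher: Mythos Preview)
Your proposal is correct and is essentially the same as the paper's proof: both sample $\mu \sim \U(\Delta^d)$ and push it forward through the affine maps determined by the matched vertices, using linearity of $\sigma_d^i$ and $a+b=c$ to verify $X+Y=Z$. Your treatment of the degenerate case is in fact more careful than the paper's, which simply asserts that the pushforward of $\U(\Delta^d)$ under $F_d(t,v)$ is $\U(\Sigma_d(t,v))$ without comment.
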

\begin{proof}
For any $t\in[0, d]$ and $v\in [0,1]^d$, consider the affine map $F_{d}(t, v):\Delta^{d} \to \Sigma_d(t, v)$ given by 
\begin{equation}\label{eq:Ftv}
[F_{d}(t, v)](\mu_0, \dots, \mu_{d}) = \left(\frac{t\mu_d}{d}\right)\mbf{1}+ \sum_{i=0}^{d-1}\mu_i \sigma_{d}^i(v).
\end{equation}
Now, we couple the distributions as follows: take $\mu \in\U\left(\Delta^d\right)$ and let 
\[ (X, Y, Z)=\left([F_d(a, x)](\mu), [F_d(b, y)](\mu), [F_d(c, z)](\mu)\right).\]
Observe that marginals are uniform. That is, $[F_{d}(t, v)](\mu)\sim \U\left(\Sigma_d(t, v)\right)$ for every $t, v$ when $\mu\sim\U\left(\Delta^d\right)$, since $F$ induces a constant probability density on the simplex. It now suffices to prove $X+Y=Z$ almost surely. By linearity of $\sigma^i_d$, we have that for all $\mu\in \Delta^d$,
\[
[F_d(a, x)](\mu)+ [F_d(b, y)](\mu)  = \left(\frac{(a+b)\mu_d}{d}\right)\mbf{1}+ \sum_{i=0}^{d-1}\mu_i \sigma_{d}^i(x+y)
 = [F_d(c, z)](\mu). \qedhere
\]
\end{proof}
We now construct simplices $\mc{Y}_1, \mc{Z}_1, \mc{X}_2, \mc{Y}_2$ for $d\in\{3, 4\}$ and prove \cref{prop:cont-leftover}. To check that $\mc{Y}_1, \mc{Z}_1, \mc{X}_2, \mc{Y}_2\subset \mc{C}$ such that $\mc{Z}_1\cap\left(\mc{Y}_1\cup \mc{X}_2\cup \mc{Y}_2\right) = \varnothing$, we make the following trivial observation: 
\[ \mbf{1}^\intercal \left[\Sigma_{d-1}(t , v)\right] = [t, \mbf{1}^\intercal v],\]
where we let $\mbf{1}^\intercal S = \{\mbf{1}^\intercal s :s\in S\}$ for $S\subset \mb{R}^d$. By convexity, $\Sigma_{d-1}(t , v)\subset \mc{C}$ if $t , \mbf{1}^\intercal v\in [u_d, 2u_d-1]$.
The joint mixability of the distributions and \cref{eq:cont-room-c} will follow \cref{lem:jm-simplices,lem:vol-sigma}, respectively.

\subsection[Construction for $d=3$]{Construction for \texorpdfstring{$d = 3$}{d = 3}}\label{sec:3-leftover}
Here, $\mc{A}, \mc{C}\subset [0, 1]^2$ and $u_3^*\approx 1.112$ by \cref{fact:ud-comp}.

\begin{proof}[Proof of \cref{prop:cont-leftover} with $d=3$.]
We define the following regions of $\mb{R}^2$ and compute
 \begin{align*}
 \mc{X}_1 = \mc{A} = \Sigma_2\left(0, (u_3^*-1, 0)\right) & \implies \mbf{1}^\intercal \mc{X}_1 = [0, u_3^*-1],\\
 \mc{Y}_1 = \Sigma_2\left(\frac{3u_3^*-1}{2}, (u_3^*-1, 1)\right) & \implies \mbf{1}^\intercal \mc{Y}_1 =\left[u_3^*, \frac{3u_3^*-1}{2} \right], \\
 \mc{Z}_1 = \Sigma_2\left(\frac{3u_3^*-1}{2}, (2u_3^*-2, 1)\right) &\implies \mbf{1}^\intercal\mc{Z}_1 =\left[\frac{3u_3^*-1}{2}, 2u_3^*-1 \right].
 \end{align*}

\begin{center}
\begin{tikzpicture}[scale=3]

  % u_3^*
  \pgfmathsetmacro{\u}{(15-sqrt(15))/10}
  \pgfmathsetmacro{\um}{\u-1}         % u*_3 - 1
  \pgfmathsetmacro{\t}{(3*\u-1)/2}    % t = (3u*_3 -1)/2
  \pgfmathsetmacro{\th}{\t/2}         % t/2
  \pgfmathsetmacro{\vB}{2*\u-2}       % 2u*_3 - 2

  %===================== SQUARE ======================
  \draw[thick] (0,0) -- (1,0) -- (1,1) -- (0,1) -- cycle;

  %===================== X1  ======================
  \coordinate (X1A) at (0,0);      
  \coordinate (X1B) at (\um,0);    
  \coordinate (X1C) at (0,\um);    
  \fill[gray!25] (X1A) -- (X1B) -- (X1C) -- cycle;
  \draw[thick] (X1A) -- (X1B) -- (X1C) -- cycle;
  \node at (0.15,0.15) {$\mc{X}_1$};

  % colors
  \node[circle,fill=black,inner sep=2pt]            at (X1A) {};
  \node[circle,fill=black,inner sep=2pt]             at (X1B) {};
  \node[circle,fill=black!50!black,inner sep=2pt]  at (X1C) {};

  %===================== Y1 ======================
  \coordinate (Y1A) at (\th,\th);   
  \coordinate (Y1B) at (\um,1);     
  \coordinate (Y1C) at (1,\um);     
  \fill[gray!25] (Y1A) -- (Y1B) -- (Y1C) -- cycle;
  \draw[thick] (Y1A) -- (Y1B) -- (Y1C) -- cycle;
  \node at (0.48,0.48) {$\mc{Y}_1$};

  \node[circle,fill=black,inner sep=2pt]            at (Y1A) {};
  \node[circle,fill=black,inner sep=2pt]             at (Y1B) {};
  \node[circle,fill=black!50!black,inner sep=2pt]  at (Y1C) {};

  %===================== Z1 ======================
  \coordinate (Z1A) at (\th,\th);  
  \coordinate (Z1B) at (\vB,1);   
  \coordinate (Z1C) at (1,\vB);    
  \fill[gray!25]  (Z1A) -- (Z1B) -- (Z1C) -- cycle;
  \draw[thick] (Z1A) -- (Z1B) -- (Z1C) -- cycle;
  \node at (0.78,0.70) {$\mc{Z}_1$};

  \node[circle,fill=black,inner sep=2pt]            at (Z1A) {};
  \node[circle,fill=black,inner sep=2pt]             at (Z1B) {};
  \node[circle,fill=black!50!black,inner sep=2pt]  at (Z1C) {};
\end{tikzpicture}
\end{center}
 
Note that $\mc{Y}_1^\circ \cap \mc{Z}_1^\circ =\varnothing$ as $\mbf{1}^\intercal\mc{Y}_1\cap \mbf{1}^\intercal\mc{Z}_1 = \set{(3u_3^* - 1)/2}$. We plug in $u_3^*$ check that $\mc{Y}_1, \mc{Z}_1\subset\mc{C}$. By \cref{lem:jm-simplices}, $\left(\U\left(\mc{X}_1\right), \U\left(\mc{Y}_1\right), \U\left(\mc{Z}_1\right)\right)$ is compatible. Finally, we check volumes by \cref{lem:vol-sigma}:
\begin{align*}
\frac{|\mc{X}_1|}{|\mc{Z}_1|} & = \frac{u_3^*-1}{(u_3^*-1)/2 }\left(\frac{\sqrt{2(u_3^*-1)^2}}{\sqrt{2(3-2u_3^*)^2}}\right) = 0.291\dots,\\
\frac{|\mc{X}_1|}{|\mc{Y}_1|} & =\frac{u_3^*-1}{(u_3^*-1)/2}\left(\frac{\sqrt{2(u_3^*-1)^2}}{\sqrt{2(2u_3^*-2)^2}}\right) = 1
\end{align*}
Both ratios are at most $1$, proving \cref{eq:cont-room-c-3}.
\end{proof}

\subsection[Construction for $d=4$]{Construction for \texorpdfstring{$d = 4$}{d = 4}}\label{sec:4-leftover}
Here, $\mc{A}, \mc{C}, \mc{E}\subset [0, 1]^3$ and $u_4^*\approx 1.448$ by \cref{fact:ud-comp}.\begin{proof}[Proof of \cref{prop:cont-leftover} with $d=4$.]
We define the following regions of $\mb{R}^3$ and compute
\begin{align*}
\mc{X}_1 = \mc{A} = \Sigma_2\left(0, (u_4^*-1, 0, 0)\right)
 & \implies \mbf{1}^\intercal \mc{X}_1 = [0, u_4^*-1],\\
\mc{Y}_1 = \Sigma_2\left(u_4^*+\frac{2}{15}, (0, 1, u_4^*-1)\right) & \implies \mbf{1}^\intercal \mc{Y}_1 = \left[u_4^*, u_4^*+\frac{2}{15} \right],\\
\mc{Z}_1 = \Sigma_2\left(u_4^*+\frac{2}{15}, (u_4^*-1, 1, u_4^*-1)\right) & \implies \mbf{1}^\intercal \mc{Z}_1 = \left[u_4^*+\frac{2}{15}, 2u_4^*-1\right], \\
\mc{X}_2 = \Sigma_2\left(\frac{3}{2}, (u_4^*-1, 0, 1)\right)
 & \implies \mbf{1}^\intercal \mc{X}_2 =\left[u_4^*, \frac{3}{2}\right],\\
\mc{Y}_2 = \Sigma_2\left(\frac{3}{2}, (u_4^*-1, 1, 0)\right) & \implies \mbf{1}^\intercal \mc{Y}_2 = \left[u_4^*, \frac{3}{2}\right],\\
\mc{Z}_2 = \mc{E}=\Sigma_2\left(3, (2u_4^*-2, 1, 1)\right) & \implies \mbf{1}^\intercal \mc{Z}_2 = \left[2u_4^*, 3\right].
 \end{align*}
Note that $\mc{Y}_1^\circ \cap (\mc{Z}_1\cup\mc{X}_2\cup\mc{Y}_2)^\circ=\varnothing$ as $\mbf{1}^\intercal\mc{Y}_1\cap (\mbf{1}^\intercal\mc{Z}_1\cup\mbf{1}^\intercal\mc{X}_2\cup\mbf{1}^\intercal\mc{Y}_2)=\set{u_4^*, u_4^* + 2/15}$. We plug in $u_4^*$ to check that $\mc{Y}_1, \mc{Z}_1, \mc{X}_2, \mc{Y}_2\subset\mc{C}$. By \cref{lem:jm-simplices}, $\left(\U\left(\mc{X}_1\right), \U\left(\mc{Y}_1\right), \U\left(\mc{Z}_1\right)\right)$ and $\left(\U\left(\mc{X}_2\right), \U\left(\mc{Y}_2\right), \U\left(\mc{Z}_2\right)\right)$ are both compatible. Finally, we check volumes by \cref{lem:vol-sigma}:
\begin{align*}
\frac{|\mc{X}_1|}{|\mc{Z}_1|} & = \frac{u_4^*-1}{u_4^*-17/15}\left(\frac{\sqrt{2(u_4^*-1)^2}}{\sqrt{2(u_4^*-2)^2}}\right)^2 = 0.937\dots,\\
\frac{|\mc{X}_1|}{|\mc{Y}_1|} & = 
\frac{u_4^*-1}{2/15}\left(\frac{\sqrt{2(u_4^*-1)^2}}{\sqrt{1+(u_4^*-2)^2+(u_4^*-1)^2}}\right)^2 = 0.895\dots
\\ \frac{|\mc{Z}_2|}{|\mc{X}_2|} & = \frac{3-2u_4^*}{3/2-u_4^*}\left(\frac{\sqrt{2(2u_4^*-3)^2}}{\sqrt{1+(u_4^*-2)^2+(u_4^*-1)^2}}\right)^2 = 0.028\dots
\\ \frac{|\mc{Z}_2|}{|\mc{Y}_2|} & = 
\frac{3-2u_4^*}{3/2-u_4^*}\left(\frac{\sqrt{2(2u_4^*-3)^2}}{\sqrt{1+(u_4^*-2)^2+(u_4^*-1)^2}}\right)^2= 0.028\dots
\end{align*}
The first is strictly less than $1$ and the sum of the last three is also less than $1$, proving \cref{eq:cont-room-c}.
\end{proof}

\section{Discretization and the weight Function}\label{sec:discr}

In this section, we discretize the couplings given in \cref{prop:BBD} and \cref{prop:cont-leftover} to construct the weight function, and show that it satisfies the conditions of \cref{prop:weight}.

\subsection{The discretization procedure}

We discretize couplings that witnesses the compatibility of a triple of uniform distributions on convex polytopes with nonempty interiors. This procedure works in arbitrary dimensions. Define the total variation distance between $P$ and $Q$ on the same discrete state space $\Omega$ by 
\begin{equation}
    \dtv(P, Q) = \frac{1}{2}\sum_{\omega \in \Omega}|P(\omega) - Q(\omega)|.
\end{equation}

\begin{lemma}\label{lem:discr-pseudo-uniform}
    For $n\in \mb{N}$ and polytope $\mc{X} \subset [0,1]^{d}$ with non-empty interior, define a random vector $X$ as follows.
    
 \begin{enumerate}
     \item Sample $U_{\mc{X}} \sim \U(\mc{X})$;
     \item Round $nU_{\mc{X}}$ to $X' \in \mb{Z}^d$ such that $\|X' - nU_{\mc{X}}\|_\infty \leq 2$;
     \item Sample $T \sim \U([r]^{d})$ independently with $r = \floor{n^{1/2}}$;
     \item Let $X = X' + T$. 
 \end{enumerate}
 
    Then $\dtv( X, \U(\mc{X}^n)) = O\left(n^{-1/2}\right)$. 

\end{lemma}

\begin{proof}
Let $B(l) \triangleq [-l, l]^d$. Let $\mc{I} = \{x \in \mb{Z}^d : x + B(r) \subset n \mc{X} \}$ be the set of lattice points in $n \mc{X}$ far from the boundary. Roughly speaking, we will show that  $X$ and $\U(\mc{X}^n)$ have similar probabilities inside $\mc{I}$ and tiny probability of falling outside $\mc{I}$.

First, observe that if $x \in \mc{I}$ then\begin{equation*}
\mb{P}(X = x) = \mb{P}(X' + T = x) =\frac{\mb{P}(X' \in x - [r]^{d})}{r^{d}}=\frac{r^d+O(r^{d-1})}{r^d|n\mc{X}|} =\frac{1 + O(n^{-1/2})}{|\mc{X}^n|}.
\end{equation*}by \cref{fact:geometry-of-numbers} and our choice of $r$. On the other hand, $\mb{P}(\U(\mc{X}^n) =x) = 1/|\mc{X}^n|$.

Next, we show that the probability that $X$ or $\U(\mc{X}^n)$ fall outside $\mc I$ is small. Suppose $\partial \mc{X}$ denotes the boundary of $\mc{X}$.
\begin{align*}
	\mb{P}\left(X \notin \mc I \right) &= \mb{P}\left(X + B(r) \not \subset n \mc X \right) \\
		&\leq \mb{P}\left(nU_{\mc X} + 3B(r) \not\subset n\mc{X} \right) \\
		&= \mb{P}\left(U_{\mc X}+ B(3r/n) \not\subset \mc{X} \right) \\ 
        &\leq  \mb{P}\left(U_{\mc X} \in  \mc{\partial X} + B(3r/n) \right)  \\
        &= \frac{\on{vol}_{d-1}(\partial \mc{X}) \times O(r/n) }{\on{vol}_{d}(\mc{X})} =  O\left(n^{-1/2}\right)
\end{align*}

Similarly, we have $\mb{P}(\U(\mc{X}^n) \notin \mc I) = \mb{P}\left(\U(\mc{X}^n)+ B(r) \not\subset n\mc{X} \right) = O\left(n^{-1/2}\right)$. Combining together,
    \begin{align*}
        \dtv( X, \U(\mc{X}^n)) &= \frac{1}{2}\sum_{x \in \mb{Z}^{d}} \left|\mb{P} (X=x) - \mb{P} (\U(\mc{X}^n)=x)\right| \\
                      &\leq \frac{1}{2}\sum_{ x \in \mc I } \left|\mb{P} (X=x) - \mb{P} (\U(\mc{X}^n)=x)\right|  +  \mb{P}\left(X \notin \mc I\right) + \mb{P}\left(\U(\mc{X}^n) \notin \mc I \right)\\
                      &\leq \left| \mc I \right| \cdot \left|\frac{1 + O(n^{-1/2})}{|\mc{X}^n|}- \frac{1}{|\mc{X}^n|}\right| + O\left(n^{-1/2}\right) 
                      \\ & \le O\left(n^d\right) \cdot \frac{O(n^{-1/2})}{\Theta(n^d)}+ O\left(n^{-1/2}\right) \\
                      &= O\left(n^{-1/2}\right).
    \end{align*}
\end{proof}

\begin{lemma}\label{lem:dtv-ineq}
Let $Z, U, T$ be independent random vectors taking values in $\mb{Z}^{d}$. Then,
\begin{equation}
    \dtv(Z + T, U) \leq \dtv(Z,U) + \dtv(U+T, U).
\end{equation}
\end{lemma}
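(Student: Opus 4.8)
The plan is to deduce this from two standard properties of total variation distance: the triangle inequality, and the fact that convolving both arguments with the same independent random vector can only decrease $\dtv$. First I would apply the triangle inequality for $\dtv$ to write
\[
\dtv(X+T, U) \le \dtv(X+T, U+T) + \dtv(U+T, U),
\]
where $U+T$ denotes a random vector distributed as the sum of independent copies of $U$ and of $T$. Since $\dtv$ depends only on the two laws involved, not on any coupling, it is harmless that the copy of $T$ in the term $\dtv(X+T,U+T)$ need not be literally the same random variable as the one in $\dtv(U+T,U)$. It then suffices to establish the contraction bound $\dtv(X+T, U+T) \le \dtv(X, U)$.

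For the contraction step I would condition on the value of $T$. Because $T$ is independent of $X$ and of $U$, the law of $X+T$ is the mixture $\sum_{t\in\ZZ^d} \PP(T=t)\,\mathcal{L}(X+t)$, and likewise the law of $U+T$ equals $\sum_{t\in\ZZ^d} \PP(T=t)\,\mathcal{L}(U+t)$. Total variation distance is jointly convex in its two arguments, so
\[
\dtv(X+T, U+T) \le \sum_{t\in\ZZ^d} \PP(T=t)\, \dtv(X+t, U+t).
\]
Finally, translation by a fixed vector $t$ is a bijection of $\ZZ^d$, hence $\dtv(X+t, U+t) = \dtv(X, U)$ for every $t$, and the right-hand side collapses to $\dtv(X,U)\sum_t\PP(T=t)=\dtv(X,U)$. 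Combining this with the triangle-inequality display from the first step yields the claim.

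I do not anticipate a genuine obstacle here; the statement is essentially a packaging of two textbook facts. The only point requiring a little care is the bookkeeping already noted above, namely that $\dtv$ is a functional of distributions alone, so the two occurrences of ``$U+T$'' are consistent. If one prefers a slicker phrasing, the contraction step can instead be presented as an instance of the data-processing inequality: the map $v \mapsto v + T$ is a fixed Markov kernel, and applying the same kernel to $\mathcal{L}(X)$ and to $\mathcal{L}(U)$ cannot increase their total variation distance.
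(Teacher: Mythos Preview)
Your proof is correct and follows essentially the same route as the paper: both insert $U+T$ via the triangle inequality and then bound $\dtv(X+T,U+T)\le\dtv(X,U)$ by conditioning on $T$ and using translation invariance. The only difference is presentational---you invoke joint convexity and data processing, while the paper writes out the sums explicitly.
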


\begin{proof}
By the triangle inequality and data processing inequality, we get
\begin{align*}
	\dtv(Z+T, U) &\leq \dtv (Z+T, U+T) + \dtv(U +T, U) \\
				 &\leq \dtv (Z,U) + \dtv(U+T, U).
\end{align*}
\end{proof}

\begin{theorem}\label{thm:discr}
Let $\mc{X}, \mc{Y}, \mc{Z} \subset [0, 1]^{d}$ be polytopes with non-empty interior.
Suppose the triple of distributions $(\U(\mc{X}), \U(\mc{Y}), \U(\mc{Z}))$ are compatible, then there exists a triple of random vectors $(X, Y, Z) \in \mb{Z}^d \times \mb{Z}^d \times \mb{Z}^d$ for every $n\in\mb{N}$ such that $X + Y = Z$ and
\begin{align}
    \dtv(X, \U(\mc{X}^n)) &= O\left(n^{-1/2}\right), \label{eq:disc-cond-x}\\
    \dtv(Y, \U(\mc{Y}^n)) &= O\left(n^{-1/2}\right),\label{eq:disc-cond-y} \\
    \dtv(Z, \U(\mc{Z}^n)) &= O\left(n^{-1/2}\right).\label{eq:disc-cond-z}
\end{align}

\end{theorem}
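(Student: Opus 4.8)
The plan is to couple the three discretized vectors by first sampling a single randomness source for the continuous coupling, rounding, and then adding a common independent block perturbation, so that the sum relation $X^{(n)}+Y^{(n)}=Z^{(n)}$ is preserved exactly while each marginal is pushed to near-uniform on the corresponding lattice polytope. Concretely, let $(X_0,Y_0,Z_0)$ be the continuous coupling witnessing compatibility of $(\U(\calX),\U(\calY),\U(\calZ))$, so $X_0+Y_0=Z_0$ almost surely. Choose lattice points $X_0^{(n)},Y_0^{(n)}\in\ZZ^d$ with $\|X_0^{(n)}-nX_0\|_\infty = O(1)$ and $\|Y_0^{(n)}-nY_0\|_\infty = O(1)$, and set $Z_0^{(n)}:=X_0^{(n)}+Y_0^{(n)}$; then automatically $\|Z_0^{(n)}-nZ_0\|_\infty=O(1)$. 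Now sample a single $T\sim\U([r]^d)$ with $r=\floor{n^{1/2}}$, independent of everything, and define $X^{(n)}=X_0^{(n)}+T$, $Y^{(n)}=Y_0^{(n)}+T$, $Z^{(n)}=Z_0^{(n)}+2T$. These clearly satisfy $X^{(n)}+Y^{(n)}=Z^{(n)}$.

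The next step is to estimate each total variation. For $X^{(n)}$ and $Y^{(n)}$ this is exactly the setup of \cref{lem:discr-pseudo-uniform} (with shift $T\sim\U([r]^d)$), so $\dtv(X^{(n)},\U(\calX^{(n)}))=O(n^{-1/2})$ and likewise for $Y^{(n)}$. The subtlety is $Z^{(n)}$: its perturbation is $2T$, which is uniform on $(2\ZZ\cap[2,2r])^d$ rather than on a full box $[r']^d$, so \cref{lem:discr-pseudo-uniform} does not apply verbatim. To handle this, I would write $2T = T' + T''$ where $T',T''$ are i.i.d.\ copies of $\U([r]^d)$ conditioned appropriately — or more cleanly, observe that $\dtv$ is unaffected by an invertible affine change of lattice, and that $Z_0^{(n)}+2T$ is, up to such a change, a scaled/translated copy of the construction in \cref{lem:discr-pseudo-uniform} applied to the polytope $\calZ$ with a coarser grid of spacing $2$; the Ehrhart-type estimate \cref{fact:ehr} still gives that the number of grid points of $n\calZ$ in the coarse lattice is $\Theta(n^d)$ up to the relevant lower-order terms, and the same boundary-avoidance argument (the shift has $\ell_\infty$-norm $O(r)=O(n^{1/2})$, negligible at scale $n$) goes through. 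Alternatively, and perhaps most simply, one can sample an \emph{additional} independent $T^*\sim\U([r]^d)$ and redefine $X^{(n)}=X_0^{(n)}+T+T^*$, $Y^{(n)}=Y_0^{(n)}+T$, $Z^{(n)}=Z_0^{(n)}+2T+T^*$; then the $Z$-perturbation $2T+T^*$ can be compared to $\U([r]^d)$ via \cref{lem:dtv-ineq} after noting $\dtv(2T+T^*+U, U)$ is small for $U\sim\U(\calZ^{(n)})$, since convolving a near-uniform distribution on a box of side $\Theta(n^{1/2})$ with a fixed distribution supported on a box of side $O(n^{1/2})$ changes it by $O(n^{-1/2})$ in total variation. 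Either way, $\dtv(Z^{(n)},\U(\calZ^{(n)}))=O(n^{-1/2})$.

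I expect the main obstacle to be precisely this mismatch between the $Z$-perturbation and the clean box perturbation assumed in \cref{lem:discr-pseudo-uniform}: one must argue that a slightly ``non-generic'' smoothing kernel (supported on $\{2,4,\dots,2r\}^d$, or on a sum of two boxes) still drives the rounded point mass $\delta_{Z_0^{(n)}}$ to near-uniform on $\calZ^{(n)}$. This is not deep — it follows from the same two ingredients used to prove \cref{lem:discr-pseudo-uniform}, namely (i) the uniform smoothing kernel has $\ell_\infty$-diameter $O(n^{1/2})=o(n)$, so it cannot see the shape of $\calZ$ except near its boundary, and (ii) the boundary layer of $n\calZ$ of width $O(n^{1/2})$ has measure $O(n^{-1/2})$ fraction of the whole, by \cref{fact:ehr}. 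I would phrase the argument so as to invoke \cref{lem:discr-pseudo-uniform} and \cref{lem:dtv-ineq} as black boxes wherever possible, only redoing the short boundary-layer computation for the coarser kernel, to keep the proof short. A final remark: the constants hidden in the $O(1)$ bound on $\|X_0^{(n)}-nX_0\|_\infty$ and in all the total-variation estimates are universal given $d\le 4$ (the ambient dimension here is $d-1\le 4$), consistent with the global conventions in \cref{sec:notation}.
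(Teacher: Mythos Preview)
Your overall strategy---sample the continuous coupling, round, then smooth with a block perturbation---is exactly the paper's. The difference is in how you distribute the perturbations. The paper samples two \emph{independent} shifts $T_1,T_2\sim\U([r]^d)$ and sets
\[
X^{(n)}=X_0^{(n)}+T_1,\qquad Y^{(n)}=Y_0^{(n)}+T_2,\qquad Z^{(n)}=Z_0^{(n)}+T_1+T_2.
\]
Then \cref{lem:discr-pseudo-uniform} applies verbatim to each of $X^{(n)}$, $Y^{(n)}$, and to $Z_0^{(n)}+T_1$; the extra $+T_2$ on $Z$ is absorbed by a single application of \cref{lem:dtv-ineq} together with the easy bound $\dtv(\U(\calZ^{(n)})+T_2,\U(\calZ^{(n)}))=O(n^{-1/2})$. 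This avoids any discussion of coarse lattices or non-uniform kernels.

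Your first proposed fix for $Z^{(n)}$ (the ``coarser grid of spacing $2$'' argument) is not correct as stated. Passing to the sublattice $2\ZZ^d$ would show $Z_0^{(n)}+2T$ is close to uniform on $n\calZ\cap(2\ZZ^d+\text{shift})$, but the target is $\U(\calZ^{(n)})=\U(n\calZ\cap\ZZ^d)$; the two differ by $\Theta(1)$ in total variation unless you separately argue that the parity vector of $Z_0^{(n)}$ is close to uniform on $\{0,1\}^d$, which you do not do and which is not automatic from \cref{lem:discr-pseudo-uniform}.

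Your second fix (add an extra independent $T^*$) does work: apply \cref{lem:discr-pseudo-uniform} to $Y_0^{(n)}+T$, to $X_0^{(n)}+T$, and to $Z_0^{(n)}+T^*$, then use \cref{lem:dtv-ineq} once for $X^{(n)}$ (extra $+T^*$) and once for $Z^{(n)}$ (extra $+2T$). This is correct but slightly less symmetric than the paper's version; using independent shifts on $X$ and $Y$ from the outset removes the need to ever reason about the kernel $2T$.
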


\begin{proof}
Let $r = \fl{n^{1/2}}$. We define $(X, Y, Z)$ as follows, mirroring \cref{lem:discr-pseudo-uniform}.
\begin{itemize}
    \item Sample a triple $(X_0, Y_0, Z_0)$ from the coupling witnessing $(\U(\mc{X}), \U(\mc{Y}), \U(\mc{Z}))$.
    \item Pick any $X', Y' \in \mb{Z}^{d}$ such that $\|X' - nX_0\|_\infty \leq 1$ and $ \|Y' - nY_0\|_\infty \leq 1$. 
    \item Set $Z' = X' + Y'\in \mb{Z}^d$.
    \item  Sample $T_1 \sim \U([r]^{d})$ and $T_2 \sim \U([r]^{d})$ independently. 
    \item Set $X = X' + T_1$, $Y = Y' + T_1$, and $Z= Z' + T_1 + T_2$.
\end{itemize}
We claim that the triple $(X,Y,Z)$ satisfy the conditions. 
To begin with, we can apply \cref{lem:discr-pseudo-uniform} to $\mc{X}$, $\mc{Y}$, and $\mc{Z}$, so we get \cref{eq:disc-cond-x,eq:disc-cond-y}. Furthermore, note that $\|Z' - nZ_0\|_\infty \leq 2$ and 
\begin{equation}\label{eq:disc-z-comp}
\dtv(Z' + T_1, \U(\mc{Z}^n)) = O\left(n^{-1/2}\right).
\end{equation} 
By \cref{lem:dtv-ineq} we have that 
\begin{align*}
	 \dtv(Z, \U(\mc{Z}^n)) &= \dtv(Z' + T_1 + T_2, \U(\mc{Z}^n)) \\
	 		&\leq \dtv(Z' + T_1, \U(\mc{Z}^n)) + \dtv(\U(\mc{Z}^n) + T_2, \U(\mc{Z}^n)) \\
	 		&\leq O\left(n^{-1/2}\right) + \dtv(\U(\mc{Z}^n) + T_2, \U(\mc{Z}^n)),
\end{align*}
so it suffices to bound $\dtv(\U(\mc{Z}^n) + T_2, \U(\mc{Z}^n))$.

Let $B(l):= [-l, l]^d$ and $\mc I = \{z \in \mb{Z}^{d} : z + B(r) \subset \mc{Z}^n\}$. Observe that  for each $z \in \mc{I}$ we have $\mb{P}(\U(\mc{Z}^n) + T_2 = x) = \mb{P}(\U(\mc{Z}^n) = x) = 1/|\mc Z^n|$. Therefore,
\begin{equation*}
	\dtv(\U(\mc{Z}^n) + T_2, \U(\mc{Z}^n)) \leq \mb{P}(\U(\mc{Z}^n) + T_2 \notin \mc I) +  \mb{P}(\U(\mc{Z}^n) \notin \mc I) = O(n^{-1/2})
\end{equation*}
This proves \cref{eq:disc-cond-z}.
\end{proof}

\subsection{Construction of the weight function}
Let $d\in\{3, 4\}$. By \cref{prop:BBD} and  \cref{prop:cont-leftover}, the triples $(\U(\mc{X}_i), \U(\mc{Y}_i), \U(\mc{Z}_i))$ are compatible for $i \in \{0,1,2\}$, where 
\begin{gather}
	\mc{X}_0 = \mc{B}, \; \mc{Y}_0 = \mc{B}, \; \mc{Z}_0=\mc{D}\\ 
	\mc{X}_1 = \mc{A}, \; \mc{Y}_1 \subset \mc{C}, \; \mc{Z}_1 \subset \mc{C} \\
	\mc{X}_2 \subset \mc{C}, \; \mc{Y}_2 \subset \mc{C}, \; \mc{Z}_2=\mc{E},
\end{gather}
and they satisfy 
\begin{equation*}
	\frac{|\mc{X}_1|}{|\mc{Y}_1|}+\frac{|\mc{Z}_2|}{|\mc{X}_2|}+\frac{|\mc{Z}_2|}{|\mc{Y}_2|} < 1.
\end{equation*}
We can apply \cref{thm:discr} to the coupling witnessing the compatibility of triple $\left(\U\left(\mc{X}_i\right), \U\left(\mc{Y}_i\right), \U\left(\mc{Z}_i\right)\right)$ to obtain coupling $(X_i, Y_{i}, Z_i) $ on $ \mb{Z}^{d-1} \times \mb{Z}^{d-1} \times \mb{Z}^{d-1}$ such that $X_i + Y_{i} = Z_i$ and
\begin{equation}\label{eq:marginal-tv}
\begin{aligned}
\dtv(X_i, \U(\mc{X}_i^n)) &= O\left(n^{-1/2}\right), \\
\dtv(Y_i, \U(\mc{Y}_i^n)) &= O\left(n^{-1/2}\right), \\
\dtv(Z_i, \U(\mc{Z}_i^n)) &= O\left(n^{-1/2}\right).
\end{aligned}
\end{equation}Let  $w_i:\mb{Z}^{d-1}\times \mb{Z}^{d-1}\times \mb{Z}^{d-1}\to \mb{R}_{\ge 0}$ be defined by  $w_i (x, y, z) = \mb{P}\left((X_i, Y_{i}, Z_i) = (x, y, z)\right)$. Define the weight function
\begin{equation}\label{eq:weight-3omp}
    w(x, y, z) = \left|\mc{Z}_0^n\right|w_0(x, y, z)+\left|\mc{X}_1^n\right|w_1(x, y, z)+\left|\mc{Z}_2^n\right|w_2(x, y, z).
\end{equation}
Under this construction, for any $v\in\mb{Z}^{d-1}$,
\begin{align}
W(v) &= \sum_{x, y \in \mb{Z}^{d-1}} w(v,x,y) + w(x,v,y)+w(x,y,v)\\
     &=  \left|\mc{Z}_0^n\right| \Bigl(\mb{P}(X_0=v) +   \mb{P}(Y_0=v) + \mb{P}(Z_0=v) \Bigr)\\
     	&+\left|\mc{X}_1^n\right|\Bigl( \mb{P}(X_1=v)+ \mb{P}(Y_1=v) + \mb{P}(Z_1=v) \Bigr)\\
     	&+\left|\mc{Z}_2^n\right| \Bigl( \mb{P}(X_2=v) +  \mb{P}(Y_2=v) + \mb{P}(Z_2=v) \Bigr)
\end{align} 
Define  $W'$ and $W''$ by 
\begin{align}
 W'(v) &\triangleq  \left|\mc{Z}_0^n\right| \paren{\frac{\mb{I}\set{v \in \mc{X}_0^n}}{|\mc{X}_0^n|} +  \frac{\mb{I}\set{v \in \mc{Y}_0^n}}{|\mc{Y}_0^n|} + \frac{\mb{I}\set{v \in \mc{Z}_0^n}}{|\mc{Z}_0^n|} }\\
     	&+\left|\mc{X}_1^n\right| \paren{\frac{\mb{I}\set{v \in \mc{X}_1^n}}{|\mc{X}_1^n|} +  \frac{\mb{I}\set{v \in \mc{Y}_1^n}}{|\mc{Y}_1^n|} + \frac{\mb{I}\set{v \in \mc{Z}_1^n}}{|\mc{Z}_1^n|} }\\
     	&+\left|\mc{Z}_2^n\right| \paren{\frac{\mb{I}\set{v \in \mc{X}_2^n}}{|\mc{X}_2^n|} +  \frac{\mb{I}\set{v \in \mc{Y}_2^n}}{|\mc{Y}_2^n|} + \frac{\mb{I}\set{v \in \mc{Z}_2^n}}{|\mc{Z}_2^n|} } \\
     W''(v) &\triangleq  \left|\mc{Z}_0\right| \paren{\frac{\mb{I}\set{v \in \mc{X}_0^n}}{|\mc{X}_0|} +  \frac{\mb{I}\set{v \in \mc{Y}_0^n}}{|\mc{Y}_0|} + \frac{\mb{I}\set{v \in \mc{Z}_0^n}}{|\mc{Z}_0|} }\\
     	&+\left|\mc{X}_1\right| \paren{\frac{\mb{I}\set{v \in \mc{X}_1^n}}{|\mc{X}_1|} +  \frac{\mb{I}\set{v \in \mc{Y}_1^n}}{|\mc{Y}_1|} + \frac{\mb{I}\set{v \in \mc{Z}_1^n}}{|\mc{Z}_1|} }\\
     	&+\left|\mc{Z}_2\right| \paren{\frac{\mb{I}\set{v \in \mc{X}_2^n}}{|\mc{X}_2|} +  \frac{\mb{I}\set{v \in \mc{Y}_2^n}}{|\mc{Y}_2|} + \frac{\mb{I}\set{v \in \mc{Z}_2^n}}{|\mc{Z}_2|} }
\end{align}
Note that \cref{eq:marginal-tv} allows us to approximate $W$ by $W'$ as follows: 
\begin{equation}
	\sum_{v \in \mathbb{Z}^{d-1}} \left|W(v) - W'(v)\right| \leq  \left|\mc{Z}_0^n\right| O\left(n^{-1/2}\right) + \left|\mc{X}_1^n\right|O\left(n^{-1/2}\right) + \left|\mc{Z}_2^n\right|O\left(n^{-1/2}\right) = O\left(n^{d-3/2}\right)
\end{equation}
On the other hand, by \cref{fact:geometry-of-numbers} we can approximate $W$ by $W''$ as
\begin{equation}
	\sum_{v \in \mathbb{Z}^{d-1}}\left|W'(v) - W''(v)\right| \leq O\left(n^{-1}\right)\times O\left(n^{d-1}\right) = O\left(n^{d-2}\right).
\end{equation}
By triangle inequality, we finally get that 
\begin{equation}\label{eq:W-approximation}
	\sum_{v \in \mathbb{Z}^{d-1}} \left|W(v) - W''(v)\right| =  O\left(n^{d-3/2}\right).
\end{equation}
Now observe that by construction, we have  
\begin{equation}
	\frac{|\mc Z_0|}{|\mc X_0|} = \frac{|\mc Z_0|}{|\mc Y_0|} = \frac{|\mc B|}{|\mc D|} = \frac{1}{2} \quad  \text{ and }  \quad \frac{|\mc{X}_1|}{|\mc{Y}_1|}+\frac{|\mc{Z}_2|}{|\mc{X}_2|}+\frac{|\mc{Z}_2|}{|\mc{Y}_2|} < 1.
\end{equation}
We conclude that $W'' : \mb{Z}^{d-1} \to \mb{R}_{+}$  
\begin{equation}\label{eq:W''}
	W''(v) = \begin{cases}
		1 &\text{if } v \in \mc A^n \cup \mc B^n \cup \mc D^n \cup \mc E^n\\
		   < 1 &\text{if } v \in \mc C^n \\
		    0 &\text{otherwise.} 
		    						\end{cases}
\end{equation}

\begin{proof}[Proof of \cref{prop:weight}]
	Since $\Omega \subset [n]^{d-1} \times [n]^{d-1} \times [n]^{d-1}$ we have that 
	\begin{equation}
		\sum_{(x,y,z) \notin  \Omega} w(x,y,z) \leq \sum_{v \notin [n]^{d-1}} W(v) \leq O\left(n^{d-3/2}\right) + \sum_{v \notin [n]^{d-1}} W''(v) = O\left(n^{d-3/2}\right).
	\end{equation}
	This proves \cref{eq:weight-1}. On the other hand, note that by \cref{eq:W-approximation} and \cref{eq:W''}, we have that
	\begin{gather}
		\sum_{v \in \mc A^n \cup \mc B^n \cup \mc D^n \cup \mc E^n} |W(v) - 1| = \sum_{v \in A^n \cup \mc B^n \cup \mc D^n \cup \mc E^n} \left|W(v) - W''(v)\right| = O\left(n^{d-3/2}\right) \\
		\sum_{v \in \mc C^n} |W(v) - 1|_+ = O\left(n^{d-3/2}\right)+ \sum_{v \in \mc C^n} \left|W''(v)-1\right|_+ = O\left(n^{d-3/2}\right).
	\end{gather}
	These prove \cref{eq:weight-2} and \cref{eq:weight-3}.
\end{proof}

\section[Towards a General Dimension $d$]{Towards a General Dimension \texorpdfstring{$d$}{d}}\label{sec:conclusion}
In this last section, we comment on how our approach works for a general dimension $d$. For the $d=2$ case first proved by \cite{ER17}, a slight modification of our approach works. We have included the reproof in \cref{sec:2d}. We discuss what we need to generalize \cref{thm:main} to the following conjecture.
\begin{conjecture}\label{conj:main-all-d}
For all $d\in \mb{N}$, the
size of the largest sum-free subset $S^n_d\subset [n]^d$ is
\[\left|S^n_d\right| = (c_d^* + o(1)) \,n^d \quad 
\text{where}
\quad c_d^* = \max_{u\in [0, d]} \on{vol}_d\left(\{v\in [0, 1]^d: \mbf{1}^\intercal v \in [u, 2u)\}\right).
\]
\end{conjecture}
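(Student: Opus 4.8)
The plan is to carry out the argument of \cref{sec:reduction,sec:key,sec:leftover,sec:discr} with $d$ arbitrary, isolating the two places where $d\le 5$ was used. The lower bound $|S_d^{(n)}|\ge c_d^*n^d+O(n^{d-1})$ is \cref{prop:lower}, valid for all $d$. The reduction of \cref{sec:reduction} and the discretization of \cref{sec:discr} are dimension-agnostic, so it suffices to prove, for each fixed $d$, the two continuous statements: \cref{prop:cont-key}, that $(\U(P^d_{u_d}),\U(P^d_{u_d}),\U(P^d_{2u_d}))$ is compatible; and \cref{prop:cont-leftover}, the existence of the six leftover simplices satisfying \cref{eq:leftover-regions,eq:cont-room-c}. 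One first checks that $u_d$ stays in the relevant range: from the optimality condition $f'(u_d)=0$ and the symmetry about $d/2$ of the coordinate-sum distribution one gets $u_d/d\to 1/3$, so the five regions of \cref{eq:ABCDE} remain disjoint with nonempty interiors (with $\calE$ possibly empty for small $d$, which only simplifies matters).

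For \cref{prop:cont-key} the natural approach is an induction on $d$ proving the stronger statement that $(\U(P^d_a),\U(P^d_b),\U(P^d_c))$ is compatible for all $a+b=c$ in a suitable window around $(u_d,u_d,2u_d)$, via two reduction moves that already appear here: \emph{peeling} one coordinate (as in \cref{lem:1marginal-coupling} and the proof of \cref{prop:cont-key} for $d=5$) and \emph{halving} the coordinates into two equal blocks (as in \cref{lem:4abc-coupling} for $d=4$, through the block-sum densities $f^{k,2k}_t$). Each move reduces the problem to a compatibility statement for the real-valued projections $(f^{1,d}_a,f^{1,d}_b,f^{1,d}_c)$ or $(f^{k,2k}_a,f^{k,2k}_b,f^{k,2k}_c)$, whose symmetry and unimodality (\cref{lem:sym}) invite \cref{prop:WW}. \textbf{This is the main obstacle.} Unlike the cases $d\le 5$, after a few reductions the relevant parameters leave the range in which these densities are monotone, so neither \cref{prop:WW}(1) nor \cref{prop:WW}(2) applies and one must use \cref{prop:WW}(3): this requires proving, uniformly in the dimension, a triangle inequality for the quantities $\ell_i(K)$ of \cref{eq:dagger}, equivalently a monotonicity estimate of the shape $L(c,x+y)\ge\min(L(a,x),L(b,y))$ for the symmetrized projection densities (\cref{eq:def-Ltx}) — a statement currently verified only by computer for a single dimension in \cref{lem:gabc-jm}. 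For large $d$ I would expect a Gaussian approximation of $f^{k,d}_t$, with explicit error control, to be the right vehicle.

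For \cref{prop:cont-leftover} the plan is to exhibit explicit simplices from the family $\Sigma_{d-1}(t,v)$ of \cref{eq:sigma}, parametrized uniformly in $d$; \cref{lem:jm-simplices} then gives compatibility of the associated uniform distributions and \cref{lem:vol-sigma} reduces \cref{eq:cont-room-c} to inequalities among explicit volumes, namely $\vol{\calA}<\vol{\calZ_1}$ and $\vol{\calA}/\vol{\calY_1}+\vol{\calE}/\vol{\calX_2}+\vol{\calE}/\vol{\calY_2}<1$. Here $\calA$ and $\calE$ are corner simplices of $[0,1]^{d-1}$ whose thickness in the $\mbf{1}$-direction is $u_d-1$, the same as that of the bulk region $\calC$, so one needs the four simplices inside $\calC$ to have volume not much smaller than those of $\calA$ and $\calE$. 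The slack obtained for $d=4,5$ (leftover sums $\approx 0.95$ and $\approx 0.97$) is already thin, so this step is unlikely to go through by naive analogy in every dimension; the likely resolution is either a more efficient choice of simplices inside $\calC$, or enriching the weight function beyond the three couplings $w_0,w_1,w_2$ — for instance splitting $\calA$ and $\calE$ into several pieces coupled against different parts of $\calC$, exploiting the freedom one has in $\calC$. Making this construction robust in every dimension is the second, more combinatorial, obstacle.
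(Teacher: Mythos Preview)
The statement is \cref{conj:main-all-d}, which the paper explicitly leaves as a conjecture; there is no proof in the paper to compare against. What the paper does provide is a reduction: since the arguments of \cref{sec:reduction} and \cref{sec:discr} are dimension-free, \cref{conj:main-all-d} follows from the two continuous statements \cref{conj:key-coup} and \cref{conj:leftover-coup}. Your proposal carries out exactly this reduction and correctly flags the two remaining obstacles, so in that sense it matches the paper's treatment. You are also right that the lower bound (\cref{prop:lower}) and the discretization (\cref{thm:discr}) require no change.

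Two remarks on the details. First, your formulation of the second obstacle is slightly more restrictive than the paper's: you aim for \cref{prop:cont-leftover} (six \emph{simplices} in $\calC$ with the volume inequalities \cref{eq:cont-room-c}), whereas the paper's \cref{conj:leftover-coup} only asks for absolutely continuous distributions $f_1,\dots,f_4$ on $\calC$ with a pointwise density bound. Your own observation that the slack in \cref{eq:cont-room-c} is already thin for $d=4,5$ suggests the simplex ansatz may not survive in higher dimensions, so the more flexible formulation is the one to target. Second, your heuristic $u_d/d\to 1/3$ and the ensuing claim that the five regions of \cref{eq:ABCDE} stay disjoint with nonempty interiors deserves a line of justification if you pursue this; the paper does not assert it. In any case, your proposal is an honest outline of a program, not a proof, and that is also all the paper offers for general $d$.
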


Our reduction to \cref{prop:weight} in \cref{sec:upper} and our discretization procedure in \cref{sec:discr} hold for any dimension $d \geq 3$. Hence, it suffices to construct the continuous couplings \cref{prop:BBD,prop:cont-leftover}, i.e. we have reduced \cref{prop:weight} to the following two conjectures.
Recall $\mc{P}^d_t = \set{v \in [0,1]^d : 1^\intercal  v = t }$ and $u_d^* =  \argmax_{u} \on{vol}_d\left(\{v\in [0, 1]^d: \mbf{1}^\intercal v \in [u, 2u)\}\right).$ 
\begin{conjecture}\label{conj:key-coup}
$(\on{U}(\mc P^d_{u_d^*}),\on{U}(\mc P^d_{u_d^*}), \on{U}(\mc P^d_{2u_d^*}) )$ are compatible for all $d\in \mb{N}$.
\end{conjecture}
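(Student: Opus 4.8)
The plan is to prove \cref{conj:key-coup} — the all-dimensional form of \cref{prop:cont-key} — by strong induction on $d$, upgrading it to the more robust \emph{slice-compatibility} statement that for each $d$ there is an interval $R_d\subseteq[0,d]$ containing $u_d$ and wide enough for the recursion below to close, such that $\left(\U(P^d_a),\U(P^d_b),\U(P^d_c)\right)$ is compatible whenever $a+b=c$ with $a,b\in R_d$ and $c\le d$. The base cases are $d=1$ (where $P^1_t$ is a single point, so compatibility is immediate), $d=2$ (\cref{lem:2coupling}, with $R_2=[0,2]$), and $d=3$ (where the geometric decomposition of \cref{sec:3-key} adapts to a whole interval of parameters). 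The inductive step is the coordinate split already used for $d=4,5$ in \cref{sec:key}, arranged so that each step roughly halves $d$.

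\textbf{The inductive step.} For even $d=2k$, split $\mb{R}^d=\mb{R}^k\times\mb{R}^k$: given a target $(a,b,c)$, first couple the sums $(S_X,S_Y,S_Z)$ of the first $k$ coordinates of $(X,Y,Z)$ through a witness of the joint mixability of $\left(g^k_a,g^k_b,g^k_{2k-c}\right)$ — which has total mean $2k$ by \cref{lem:sym}(3) — and then, conditionally on those block sums, couple the first and last $k$-coordinate blocks using the inductive hypothesis for $P^k$ with parameters $(S_X,S_Y,S_Z)$ and $(a-S_X,b-S_Y,c-S_Z)$; this is precisely the $d=4$ construction of \cref{sec:4-key}. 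For odd $d\ge5$, first peel a single coordinate as in \cref{sec:5-key}: sample the last coordinates $(X_d,Y_d,Z_d)$ from a witness of the compatibility of $\left(f^{1,d}_a,f^{1,d}_b,f^{1,d}_c\right)$ — available from \cref{lem:1marginal-coupling} once $a,b\le(d-1)/2$ and $c\ge(d+1)/2$ — and then apply the (now even-dimensional) case to the first $d-1$ coordinates with parameters $(a-X_d,b-Y_d,c-Z_d)$. Since $g^k_t$ is symmetric unimodal with mode $0$ by \cref{lem:sym}, the joint mixability needed in the even case follows from \cref{prop:WW}(3) as soon as the quantities $\ell_i(K)$ of \cref{eq:dagger} are shown to satisfy the triangle inequality for every $K\in(0,1/2)$; for $k=2$ this is exactly \cref{lem:gabc-jm}.

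\textbf{Bookkeeping.} One must check that $u_d$, and all of the parameters produced along the recursion, stay inside the ranges where \cref{lem:1marginal-coupling} and the $g^k$ joint-mixability lemma apply, and choose the intervals $R_d$ accordingly. The bounds needed on $u_d$ — for instance $(d+1)/4\le u_d\le(d-1)/2$ for $d\ge4$ — follow from the optimality condition $2\on{vol}_{d-1}(P^d_{2u_d})=\on{vol}_{d-1}(P^d_{u_d})$ of \cref{eq:def-f'} together with the unimodality of $t\mapsto\on{vol}_{d-1}(P^d_t)$ and the fact that this function is much smaller near the endpoints $0,d$ than near its center. The parameters fed to each block are automatically confined to $[0,k]$ by the support of the conditioned Irwin--Hall densities $f^{k,2k}_t$, so a single family of intervals $R_d$ growing linearly in $d$ should suffice; this part is routine but laborious.

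\textbf{Main obstacle.} The genuine difficulty — and the reason \cref{conj:key-coup} and \cref{conj:main-all-d} are still open — is generalizing \cref{lem:gabc-jm} to all block sizes $k\ge3$: one must establish the inequality $L(c,x+y)\ge\min\left(L(a,x),L(b,y)\right)$ of \cref{sec:4-key}, with $L$ as in \cref{eq:def-Ltx} but built from $g^k_t$, uniformly over the relevant ranges. For $k=2$ this was checked by explicit computation; as $k\to\infty$ the $g^k_t$ converge to Gaussians, which should make the estimate asymptotically tractable, but handling all intermediate $k$ uniformly — or replacing \cref{prop:WW}(3) by a softer criterion such as \cref{prop:WW}(1) applied to a cleverer coordinate split — is precisely the crux.
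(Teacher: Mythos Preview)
This statement is \cref{conj:key-coup}, which the paper leaves as an \emph{open conjecture}; there is no proof in the paper to compare against. The paper establishes only the cases $d\in\{3,4,5\}$ (\cref{prop:cont-key}) and explicitly remarks in \cref{sec:conclusion} that \cref{conj:main-all-d} reduces to \cref{conj:key-coup} and \cref{conj:leftover-coup}, both unresolved.

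Your proposal is not a proof but a research strategy --- and you are upfront about this. The inductive scheme you describe is exactly the natural extrapolation of the paper's own methods: the even-$d$ block split generalizes \cref{sec:4-key}, the odd-$d$ coordinate peel generalizes \cref{sec:5-key}, and you correctly isolate the crux as the all-$k$ analogue of \cref{lem:gabc-jm}, namely showing that $(g^k_a,g^k_b,g^k_c)$ are jointly mixable via \cref{prop:WW}(3) for the required parameter ranges. That is precisely where the paper's argument stops, and you acknowledge it.

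Two smaller points worth flagging. First, your base case $d=3$ asserts that the explicit triangulation of \cref{sec:3-key} ``adapts to a whole interval of parameters''; this is plausible but not immediate, since that construction is tailored to the specific value $u_3$ and the four-triangle decomposition relies on particular vertex incidences. Second, the bookkeeping is more delicate than ``routine but laborious'': in the even step you need the conditioned block-sum parameters $(S_X,S_Y,S_Z)$ and $(a-S_X,b-S_Y,c-S_Z)$ to fall inside $R_k$ almost surely, which constrains $R_k$ from below, while the range where the $g^k$-mixability holds constrains it from above; ensuring these constraints are simultaneously satisfiable for all $k$ is itself a nontrivial consistency check that interacts with the main obstacle.
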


\begin{conjecture}\label{conj:leftover-coup}
For all $d\in\mb{N}$, there exists four simplices  $\mc F_1, \mc F_2, \mc F_3, \mc F_4 \subset \mc C$ such that\begin{itemize}
	\item[-] $(\U(\mc{A}),\U(\mc{F}_1),\U(\mc{F}_2))$ and $\left(\U(\mc{F}_3), \U(\mc{F}_4), \U(\mc{E})\right)$ are compatible;
	\item[-] $|\mc{A}|\sqb{\frac{\mb{I}(v \in \mc F_1)}{|\mc F_1|} + \frac{\mb{I}(v \in \mc F_2)}{|\mc F_2|}} + |\mc{E}|[\frac{\mb{I}(v \in \mc F_3)}{|\mc F_3|} + \frac{\mb{I}(v \in \mc F_4)}{|\mc F_4|}]  < 1$ for every $v\in \mc{C}.$
\end{itemize} 
\end{conjecture}

\appendix

\section{Slice Function}
We discuss how to compute $u_d^*$ numerically. Observe that the function $f:  [0, d] \to \mb{R}$ given by $ f(u) = \vol{ \{v \in [0,1]^d : u \leq \mbf{1}^\intercal  v \leq 2u\}}$ is differentiable with derivative 
\begin{equation}\label{eq:def-f'}
    f'(u) = 2\on{vol}_{d-1}\paren{\{v \in [0,1]^d :\mbf{1}^\intercal v = 2u\}} - \on{vol}_{d-1}\paren{\{v \in [0,1]^d :\mbf{1}^\intercal v = u\}}.
\end{equation}
The derivative $f'$ is a continuous piece-wise polynomial function of degree at most $d-1$. One can compute $u_d^*$ by first order optimality condition $f'(u_d^*) = 0$. The numerical values $u_d^*$ and $c_d^*$ are computed numerically in \cite{ER17} and we record them below. 
\begin{fact}[{\cite[Theorem 6.2]{ER17}}]
\label{fact:ud-comp}
To three decimal places, $u_d^*$ and $c_d^*$ for $d\le 4$ are as follows.
\begin{itemize}
\item[-] $u_1^* = 1/2$ and $c_1^*=1/2$
\item[-] $u_2^* = 4/5$ and $c_2^*=3/5$
\item[-] $u_3^* = (15-\sqrt{15})/10= 1.112\dots$ and $c_3^* = (10+\sqrt{15})/20= 0.693\dots$
\item[-] $u_4^*= 1.448\dots$ and $c_4^* = 0.762\dots$
\end{itemize}
\end{fact}

\section{Joint Mixability of Marginals}
\label{sec:mathematica}
In this section we will prove \cref{lem:gabc-jm}.
\begin{proof}[Proof of \cref{lem:gabc-jm}]
    The proof follows from explicit computation. To describe the distributions in the lemma, recall that Irwin-Hall probability densities $f_2$ and $f_4$ with respect to $\mb{R}$ are defined via 
    \begin{equation*}
        X_1 + X_2 \sim f_2 \text{ and } X_1 + X_2 + X_3 + X_4 \sim f_4.
    \end{equation*}
    Observe that by definition of $\mu_X, \mu_Y,$ and $\mu_Z$, we have  
    \begin{equation*}
        \mu_X(t) =\mu_Y(t) = \frac{f_2(t)f_2(u_4^*-t)}{f_4(u_4^*)} \text{ and }  \mu_Z(t) = \frac{f_2(t)f_2(2u_4^*-t)}{f_4(2u_4^*)}.
    \end{equation*}
    From this, it is clear that $\mu_X = \mu_Y$ are symmetric and unimodal with support $[0, u_4^*]$. Moreover, $\mu_Z$ is symmetric and unimodal with support in $[2u_4^*-2, 2]$. Define $L_X, L_Y$, and $L_Z$  as follows
    \begin{gather*}
        L_X(t) = L_Y(t)= \int_0^{t}\mu_X(x + u_4^*/2) -\mu_X(t + u_4^*/2) \, d x, \\
        L_Z(t) = \int_0^{t}\mu_Z(z + u_4^*) -\mu_Z(t + u_4^*) \, d z. 
    \end{gather*}
    By the third condition of \cref{prop:WW}, we would like to show that for any $K \in (0,1/2)$ the positive numbers $(L_X^{-1}(K),L_Y^{-1}(K), L_{Z}^{-1}(K))$ satisfy triangle inequality. Since $L_X = L_Y$, it suffices to show that $L_Z^{-1}(K) \leq 2L_X^{-1}(K)$. Since $L_Z$ is increasing, it is enough to prove $L_Z(2t) \geq L_X(t)$. This immediately follows if $t \geq 1-u_4^*/2$ as $L_Z(2t) =1/2 \geq L_X(t)$. We will show the other case.

    \medskip
    
    Note that if $x < 1-u_4^*/2$, we also have $x < u_4^*/2$ as $u_4^* > 1$. Hence,
    \begin{equation*}
        \mu_X(x + u_4^*/2) = \frac{f_2(u_4^*/2 + x)f_2(u_4^*/2-x)}{f_4(u_4^*)} = \frac{(u_4^*)^2-4x^2}{4f_4(u_4^*)}.
    \end{equation*}
    Recall that $f_4(u_4^*)=2f_4(2u_4^*)$. For $z \leq u_4^*-1$ we also have $z < 2-u_4^*$ as $2u_4^* < 3$ so
    \begin{equation*}
        \mu_Z(z + u_4^*) = \frac{f_2(u_4^* + z)f_2(u_4^*-z)}{f_4(2u_4^*)} = 2 \left( \frac{ (2-u_4^*)^2-z^2}{f_4(u_4^*)}\right).
    \end{equation*}
    Integrating above, we have that for all $t < (u_4^* - 1)/2$
    \begin{equation*}
        L_Z(2t)  = \frac{32}{3} \left( \frac{t^3}{f_4(u_4^*)}\right) \text{ and } L_X(t) = \frac{2}{3}\left( \frac{t^3}{f_4(u_4^*)}\right).
    \end{equation*}
    This already proves the regime when $t < (u_4^* - 1)/2$. Finally, for $(u_4^*-1)/2 \leq  t < 1-u_4^*/2$ we have
    \begin{equation*}
        L_X(t) \leq L_X(1-u_4^*/2) < L_Z(u_4^*-1) \leq L_Z(2t).
    \end{equation*}
    The second inequality follows from the fact that 
    \begin{equation*}
        \frac{L_Z(u_4^*-1)}{L_X(1-u_4^*/2)} = 2 \paren{\frac{u_4^*-1}{1-u_4^*/2}}^3 \geq 1.
    \end{equation*}
    This finishes the proof. 
\end{proof}

\section{Reproof of the Two Dimensional Case}\label{sec:2d}
We also reprove the result for $d=2$ that $c_2 = 3/5$ with the sharp error bound given by \cite{ER17}.

\begin{theorem}\label{thm:2d-main}
	The size of the largest sum-free set $S_2^{(n)} \subset [n]^2$ satisfies is $3n^2/5 + O\left(n\right)$.
\end{theorem}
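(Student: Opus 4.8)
The plan is to prove the lower bound exactly as in \cref{sec:lower} and the upper bound by a genuinely necessary modification of the machinery of \cref{sec:upper,sec:discr}. For the lower bound, \cref{prop:lower} with $d=2$ already suffices: with $u_2=4/5$ and $c_2^*=3/5$ from \cref{fact:ud-comp}, the set $S_2^*=\{(a,b)\in[n]^2:a+b\in[4n/5,8n/5)\}$ is sum-free and has $3n^2/5+O(n)$ points, so $|S_2^{(n)}|\ge 3n^2/5-O(n)$.

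For the upper bound I would again project by $\pi:\RR^2\to\RR$ forgetting the last coordinate, and for a sum-free $S\subseteq[n]^2$ set $\lambda(a)=|S\cap\pi^{-1}(a)|$, so that $|S|=\sum_{a\in[n]}\lambda(a)$. \cref{lem:nd-lines} gives $\lambda(x)+\lambda(y)+\lambda(z)\le 2n+1$ whenever $x+y=z$, together with $0\le\lambda(a)\le n$. The essential new feature — and the reason $d=2$ is \emph{not} an instance of \cref{thm:main} — is that this one-dimensional linear program is not tight: its optimum is at least $\tfrac23 n^2$ (the constant profile $\lambda\equiv\tfrac23 n$ is feasible), strictly more than $\tfrac35 n^2$. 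Indeed, since $u_2<1$, in the partition of \cref{eq:ABCDE} the regions $\calA,\calC,\calE$ are empty while $\calB$ and $\calD$ overlap, and the optimal profile $\lambda^*(a)=|S_2^*\cap\pi^{-1}(a)|$ is piecewise linear — rising from $\approx n/5$ to $4n/5$ on $[1,3n/5]$, constant $4n/5$ on $[3n/5,4n/5]$, and falling back to $3n/5$ on $[4n/5,n]$ — so it never reaches $n$ and there is no full-column region to play the role of $\calC$.

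To recover the missing constant factor I would inject the structure that Elsholtz and Rackham \cite{ER17} extract from the upper convex hull of a planar sum-free set. Two refinements come essentially for free: writing $h(z)=\max\{b:(z,b)\in S\}$, the proof of \cref{lem:nd-lines} actually yields $\lambda(x)+\lambda(y)+\lambda(z)\le 2n+1-(h(z)-\lambda(z))$, so holes below the top of a column only help; and $\lambda(a)\le h(a)$ trivially, while the tops lying on the upper convex hull of $S$ form an essentially concave sequence (the heart of the ER17 argument). I would then set up the linear-programming dual: a nonnegative weight function $w$ supported on the triples on which $S_2^*$ is tight — those $x+y=z$ with $x,y\in[n/5,3n/5]$ and $z\in[4n/5,n]$ — together with dual multipliers for the constraints $\lambda(a)\le h(a)$ and for the concavity of the hull of tops, all chosen so that the total dual weight landing on each $a\in[n]$ is $1+O(1/n)$. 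Summing the corresponding dual combination of the (refined) inequalities bounds $\sum_a\lambda(a)$ term by term by the same expression evaluated at the pair $(\lambda^*,h^*)$ attached to $S_2^*$, which equals $|S_2^*|+O(n)=3n^2/5+O(n)$. The weight $w$ is the density of a coupling $(X,Y,Z)$ with $X+Y=Z$ almost surely and prescribed marginals on the relevant intervals; because everything here is one-dimensional, this coupling can be written in closed form (a rescaled, shifted variant of the uniform-interval couplings used in the proof of \cref{lem:2coupling}) and discretized exactly up to $O(1)$ rounding per coordinate, which is exactly why the final error is $O(n)$ and not the $O(n^{3/2})$ that the generic discretization of \cref{sec:discr} would give.

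The main obstacle is the middle step: since the one-dimensional projection alone is provably off by a constant factor, the proof must incorporate the ER17 upper-hull input, and essentially all the work is in recasting that input as a small family of linear inequalities in $(\lambda,h)$ that admits a clean dual certificate with total weight $1+O(1/n)$ everywhere on $[n]$ and whose underlying coupling discretizes exactly. Once this certificate is in hand, verifying the inequalities and collecting the lower-order error terms is routine.
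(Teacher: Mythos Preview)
Your proposal correctly handles the lower bound and correctly diagnoses why the axial projection alone cannot work (the LP relaxation has optimum $\ge \tfrac{2}{3}n^2$, and there is no full-column region $\calC$). But the upper-bound argument has a genuine gap at precisely the point you flag as ``the main obstacle'': you never explain how to turn the upper-hull information into a finite family of linear inequalities in $(\lambda,h)$ with a clean dual certificate summing to $1+O(1/n)$ at every $a\in[n]$. The refined inequality $\lambda(x)+\lambda(y)+\lambda(z)\le 2n+1-(h(z)-\lambda(z))$ is correct, but $h$ itself need not be concave --- only its upper convex hull is --- and passing from $\lambda$ to $h$ to the hull and back is exactly where the Elsholtz--Rackham analysis becomes intricate. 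Saying ``recast \cite{ER17} in LP-duality language'' is a reasonable research plan, but as written it is not a proof: you have described the interface, not the certificate.

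The paper's route is quite different and avoids the upper-hull machinery altogether. Instead of projecting along an axis, it works with segments in the diagonal direction $\mbf{1}$: for triples $(x,y,z)$ with $x+y=z$ lying on the line $\mbf{1}^\intercal v=\lfloor 4n/5\rfloor$ (and $z$ on the line at $2\lfloor 4n/5\rfloor$), one proves an analogue of \cref{lem:nd-lines} on these diagonal segments, where $S_2^*$ attains equality up to $O(1)$. An explicit integer-valued weight function (not a discretized continuous coupling) then sums to $1$ on all but $O(1)$ relevant half-integer points, yielding a stability statement: any sum-free $S\subset R(\alpha,\beta)=\{v:\mbf{1}^\intercal v\in[(4/5-\alpha)n,(8/5+\beta)n)\}$ with $3\alpha+\beta\le 4/5$ has $|S|\le 3n^2/5+O(n)$. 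The remaining work is two numerical lemmas ruling out sum-free sets containing a point with $\mbf{1}^\intercal v\ge 17n/10$ or $\mbf{1}^\intercal v\le 17n/30$, via area-cutting arguments in the spirit of Cameron's $2/e$ bound. The explicit discrete weights are what give the sharp $O(n)$ error rather than $O(n^{3/2})$.
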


The lower bound follows \cref{prop:lower}.
For the upper bound, the proof technique of \cref{thm:main} does not apply immediately: the analog of regions $\mc{B}$ and $\mc{D}$ in \cref{eq:ABCDE} are intervals $[0, 4/5]$ and $[3/5, 1]$. They intersect, so we cannot hope for \cref{prop:weight}. This is unsurprising as the upper bound we seek is $3/5$, but using \cref{lem:nd-lines} cannot give any upper bound smaller than $2/3$.

Instead of lines in the axial direction, we will consider segments in the $\mbf{1}$-direction, and derive an analog of \cref{lem:nd-lines} showing $S^*_2$ is optimal on those segments. Combined with an analog of \cref{prop:weight}, we will derive an stability statement in a region around $S^*_2$. Finally, we will do some numerical computations to rule out cases where $S$ contains any point not in this stability region.
\subsection{Stability of $S_2^*$}
The goal of this section is to deduce the following stability result.
\begin{lemma}\label{lem:2d-stability}
Fix any $\alpha, \beta\geq 0$ such that $3\alpha+\beta \leq 4/5$. Then, $\left|S\right|\leq 3n^2/5+O\left(n\right)$ for any sum-free 
\begin{equation}\label{eq:stab-region}
	S\subset R(\alpha, \beta):=\{v\in [n]^2:\mbf{1}^\intercal v \in [(4/5-\alpha)n, (8/5+\beta)n)\}.
\end{equation}
\end{lemma}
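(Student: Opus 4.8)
\textbf{The plan} is to run the proof of \cref{thm:main} from \cref{sec:upper}, but with lines in the $\mbf{1}$-direction in place of the axial fibres $\pi^{-1}(v)$. For $w\in\ZZ$ with $|w|\le n-1$ put $\ell_w:=\{v\in[n]^2:v_1-v_2=w\}$; these partition $[n]^2$, and since $v\mapsto v_1-v_2$ is additive, $x\in\ell_{w_1}$ and $y\in\ell_{w_2}$ force $x+y\in\ell_{w_1+w_2}$. I would parametrize $\ell_w$ by the coordinate-sum $s=\mbf{1}^\intercal v$, which runs over an arithmetic progression of common difference $2$ inside $[|w|+2,\,2n-|w|]$ and is compatible with addition of points. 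For sum-free $S\subset R(\alpha,\beta)$, write $\lambda(w):=|S\cap\ell_w|$, so that $|S|=\sum_w\lambda(w)$ and every point of $S$ has coordinate-sum in $[an,bn)$ with $a=4/5-\alpha$, $b=8/5+\beta$; for comparison $S_2^*$ yields $\lambda^*(w)=2n/5+O(1)$ when $|w|\le 2n/5$ and $\lambda^*(w)=|\ell_w|$ once $|w|\ge 4n/5$.

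\textbf{Step 1: a triple inequality}, the analogue of \cref{lem:nd-lines,lem:opt-lines}. Fix $w_1+w_2=w_3$ with all three lines meeting $R(\alpha,\beta)$. If $S\cap\ell_{w_3}=\varnothing$ the bound is trivial; otherwise let $z^*\in S\cap\ell_{w_3}$ have largest coordinate-sum $s^*$. Running the argument of \cref{lem:nd-lines} in the $s$-coordinate: $S$ meets $\ell_{w_3}$ only at sums $\le s^*$, while for each admissible $s_1$ the points of $\ell_{w_1}$ and $\ell_{w_2}$ with sums $s_1$ and $s^*-s_1$ cannot both lie in $S$. Pairing these off and counting valid positions, with an $O(1)$ error absorbing parity and the $[n]^2$-boundary, this gives
\[
\lambda(w_1)+\lambda(w_2)+\lambda(w_3)\ \le\ L(w_1)+L(w_2)+\tfrac{a}{2}\,n+O(1),\qquad\text{where } L(w):=|\ell_w\cap R(\alpha,\beta)|,
\]
and the same computation shows $S_2^*$ attains this to within $O(1)$ on the triples that matter, alongside the trivial bound $\lambda(w)\le L(w)$ on every line.

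\textbf{Step 2: dual weights}, the analogue of \cref{prop:weight}. As in \cref{sec:upper} it then suffices to build a non-negative measure on triples $(w_1,w_2,w_1+w_2)$ whose total weight $W(w)$ at each index equals $1$ on the lines where $S_2^*$ leaves room and is at most $1$ on the lines $S_2^*$ saturates, both up to $\ell^1$-error $O(n)$; integrating the Step 1 inequalities and the bounds $\lambda(w)\le L(w)$ against this measure yields $|S|\le|S_2^*|+O(n)=3n^2/5+O(n)$. Since everything is one-dimensional, the couplings behind such a measure are couplings of (essentially) uniform distributions on intervals, whose existence reduces via \cref{prop:WW}(2) to a triangle inequality among interval lengths; working this out, the exact threshold under which a valid measure exists is $3\alpha+\beta\le 4/5$. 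As in \cref{sec:2d} one discretizes by hand rather than by $\sqrt n$-smoothing, keeping the error $O(n)$.

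\textbf{The main obstacle} is bookkeeping, in two places: making Step 1 precise near $\partial R(\alpha,\beta)$, where $L(w)$ changes and the parity of the $s$-progression interacts with the pairing $s_1\leftrightarrow s^*-s_1$; and choosing the supports of the dual weights so that ``$W\equiv 1$ off the saturated lines, $W\le 1$ on them'' holds up to $O(n)$ --- this is precisely where the inequality $3\alpha+\beta\le 4/5$ must be matched against the triangle-inequality feasibility of \cref{prop:WW}(2). Neither step is deep, but recovering the sharp error $O(n)$ (rather than $O(n^{3/2})$) is what forces the explicit discretization.
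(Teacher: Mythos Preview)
Your Step~1 inequality is correct, but the claim that $S_2^*$ attains it to within $O(1)$ is false, and this is fatal for the duality argument. On interior lines one has $L(w)=(b-a)n/2+O(1)$ with $a=4/5-\alpha$ and $b=8/5+\beta$, so your right-hand side equals
\[
L(w_1)+L(w_2)+\tfrac{a}{2}\,n=(b-\tfrac{a}{2})n+O(1)=\bigl(\tfrac{6}{5}+\tfrac{\alpha}{2}+\beta\bigr)n+O(1),
\]
while $\lambda^*(w_1)+\lambda^*(w_2)+\lambda^*(w_3)=6n/5+O(1)$. The slack $(\alpha/2+\beta)n$ is $\Theta(n)$, not $O(1)$, and it does not depend on the triple --- there is no special family of ``triples that matter'' on which it vanishes. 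Any dual measure with $W(w)\equiv 1$ has total mass $\Theta(n)$, so integrating your inequality against it overshoots $|S_2^*|$ by $\Theta((\alpha+\beta)n^2)$; equivalently, the constant function $\lambda(w)=\tfrac13\bigl(\tfrac{6}{5}+\tfrac{\alpha}{2}+\beta\bigr)n$ is feasible for your LP on the bulk and already gives objective strictly above $3n^2/5$. So no choice of weights can certify the bound, and the threshold $3\alpha+\beta\le 4/5$ never enters.

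The paper's remedy is precisely the missing idea: do \emph{not} take the full fibre $\ell_w\cap R$ as one segment. Instead anchor a short segment at each of the two level sets $\mbf{1}^\intercal v=\lfloor 4n/5\rfloor$ and $\mbf{1}^\intercal v=2\lfloor 4n/5\rfloor$, and form triples $(x,y,z)$ with $x,y$ on the first level set and $z=x+y$ on the second. \cref{lem:2d-lines} then produces three segments of the \emph{same} length $a+b+c$, positioned so that the boundary of $S_2^*$ passes through each anchor; the bound $2(a+b+c)$ is attained exactly by $S_2^*$ (the $x$-segment contributes $b+c$, the $y$-segment $a+c$, the $z$-segment $a+b$). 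The weights (\cref{lem:2d-weights}) are then indexed by anchor points rather than by $w$, and the hypothesis $3\alpha+\beta\le 4/5$ enters only to make these two families of segments cover $R(\alpha,\beta)$ disjointly.
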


Hence, $S^*_2$ is the largest sum-free subset of $ R(\alpha, \beta)$. Let $[2n]/2 = \{k/2:k\in [2n]\}$ be the set of half-integers. We will prove the lemma by combining triples of segments along $\mbf{1}$ intersecting
\[
\mc{T}:=\{(x, y, z)\in ([2n]/2)^2\times ([2n]/2)^2\times ([2n]/2)^2: x+y=z, \mbf{1}^\intercal x = \mbf{1}^\intercal y =  \lfloor 4n/5 \rfloor\}.
\]
For each triple in $\mc{T}$, the following analog of \cref{lem:nd-lines} holds.

\begin{lemma}\label{lem:2d-lines}
For any $(x, y, z)\in\mc{T}$ and $a, b, c\ge 0$, if $x-a\mbf{1}, y-b\mbf{1}, z+c\mbf{1}\in [0, n]^2$, then 
\begin{equation}\label{eq:in-sq}
\ell\left(x-a\mbf{1}, x+(b+c)\mbf{1}\right), \ell\left(y-b\mbf{1}, y+(a+c)\mbf{1}\right), \ell\left(z-(a+b)\mbf{1}, z+c\mbf{1}\right) \subset [0, n]^2.
\end{equation}
Moreover, for any sum-free set $S\subset [n]^2$,
\begin{equation}\label{eq:2d-seg}
\left|S\cap \ell\left(x-a\mbf{1}, x+(b+c)\mbf{1}\right)\right|+ \left|S\cap \ell\left(y-b\mbf{1}, y+(a+c)\mbf{1}\right)\right|+\left|S\cap \ell\left(z-(a+b)\mbf{1}, z+c\mbf{1}\right)\right|
\end{equation}
is at most $2(a+b+c)+O\left(1\right)$, and equality is attained when $S=S^*_2$.
\end{lemma}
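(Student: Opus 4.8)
The plan is to split \cref{lem:2d-lines} into its two assertions: the containment \cref{eq:in-sq}, which is a convexity statement, and the counting bound \cref{eq:2d-seg} together with its equality case, which is a segment-version of the sum-free line argument from \cref{lem:nd-lines} plus an explicit evaluation on $S^*_2$. Write $\ell_1,\ell_2,\ell_3$ for the three segments appearing in \cref{eq:2d-seg}, in that order. For \cref{eq:in-sq}: each $\ell_i$ is parallel to $\mbf{1}$ and $[0,n]^2$ is convex, so it suffices to check the six endpoints. Three of them, $x-a\mbf{1}$, $y-b\mbf{1}$, $z+c\mbf{1}$, are given, and for the remaining three I would use $x+y=z$ together with the coordinatewise bounds $x_i\ge a$, $y_i\ge b$, $z_i\le n-c$ read off from the hypotheses: for instance $(x+(b+c)\mbf{1})_i=x_i+b+c\le x_i+y_i+c=z_i+c\le n$ using $y_i\ge b$, and symmetrically for $y+(a+c)\mbf{1}$ and for $z-(a+b)\mbf{1}=x+y-(a+b)\mbf{1}$, whose $i$-th coordinate lies in $[0,z_i]\subset[0,n]$; non-negativity is immediate throughout.

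For the counting bound, parametrize $\ell_1$ by $x+s\mbf{1}$ with $s\in[-a,b+c]$, $\ell_2$ by $y+t\mbf{1}$ with $t\in[-b,a+c]$, and $\ell_3$ by $z+u\mbf{1}$ with $u\in[-(a+b),c]$; each carries $a+b+c+O(1)$ lattice points of $[n]^2$, since along a line in direction $\mbf{1}$ through a point with half-integer coordinates the lattice points are equally spaced. If $S\cap\ell_3=\varnothing$, the first two segments contribute at most $2(a+b+c)+O(1)$ and we are done. Otherwise let $z+m\mbf{1}$ be the point of $S$ on $\ell_3$ with largest parameter $m$, so $|S\cap\ell_3|\le m+a+b+O(1)$. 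The crucial observation is that, because $m\le c$, for every $s\in[-a,m+b]$ both $x+s\mbf{1}$ and $y+(m-s)\mbf{1}$ lie on their segments and sum to $z+m\mbf{1}\in S$; sum-freeness then forbids both, so these $m+a+b+O(1)$ paired slots contribute at most $m+a+b+O(1)$ to $|S\cap\ell_1|+|S\cap\ell_2|$, while the leftover slots $s\in(m+b,b+c]$ on $\ell_1$ and $t\in(m+a,a+c]$ on $\ell_2$ add at most $2(c-m)+O(1)$. Summing the three contributions gives $(m+a+b)+(m+a+b)+2(c-m)+O(1)=2(a+b+c)+O(1)$.

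For the equality case, recall $u_2=4/5$, so $S^*_2=\{v\in[n]^2:\mbf{1}^\intercal v\in[4n/5,8n/5)\}$, and along each segment $\mbf{1}^\intercal v$ increases by $2$ per unit of parameter. From $\mbf{1}^\intercal x=\mbf{1}^\intercal y=\lfloor 4n/5\rfloor$ and $\mbf{1}^\intercal z=2\lfloor 4n/5\rfloor=8n/5+O(1)$, the portions of $\ell_1,\ell_2,\ell_3$ lying in $S^*_2$ are, up to $O(1)$, the parameter intervals $[0,\min(b+c,2n/5))$, $[0,\min(a+c,2n/5))$ and $[-\min(a+b,2n/5),0)$, so the three counts are $\min(b+c,2n/5)$, $\min(a+c,2n/5)$, $\min(a+b,2n/5)$ up to $O(1)$. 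In the parameter range relevant to \cref{lem:2d-stability}, where the pairwise sums $a+b$, $b+c$, $c+a$ are all at most $2n/5$, these add to exactly $2(a+b+c)+O(1)$, so $S^*_2$ attains the bound.

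The main obstacle is bookkeeping rather than ideas: in the counting step one must verify that the slots of $\ell_1$ paired with slots of $\ell_2$ by $s\mapsto m-s$ are precisely those with $s\in[-a,m+b]$ — which is exactly where the inequality $m\le c$ (that $z+m\mbf{1}$ genuinely lies on $\ell_3$) is used — that the leftover slots cover the rest without overlap, and that the various $O(1)$ slack terms (lattice points versus Euclidean length, the rounding in $\lfloor 4n/5\rfloor$, and whether endpoint levels land just inside or just outside $[4n/5,8n/5)$) stay uniformly bounded. One should also take care to state the equality claim only for the range of $a,b,c$ in which it is actually invoked.
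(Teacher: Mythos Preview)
Your proposal is correct and follows essentially the same approach as the paper. For \cref{eq:in-sq} you check endpoints coordinatewise, exactly as the paper does with its partial order $\preceq$; for the upper bound on \cref{eq:2d-seg} the paper simply invokes \cref{lem:nd-lines} ``by an affine transformation'', whereas you unroll that pairing argument explicitly in the segment parametrization, which amounts to the same thing. On the equality case you are in fact more careful than the paper: the paper asserts that the boundary of $S^*_2$ meets each segment at $x,y,z$ and computes the three counts as $b+c$, $a+c$, $a+b$ up to $O(1)$, without flagging that this presupposes the pairwise sums do not exceed $2n/5$; you correctly note this restriction and that it holds in the range where the lemma is applied.
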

\begin{proof}
Define partial order $\preceq$ on $\mb{R}^2$ where $p\preceq q$ if $p_1 \leq q_1$ and $p_2\leq q_2$. Since $x+y=z$
\[\mbf{0}\preceq x-a\mbf{1}\preceq x+(b+c)\mbf{1} = (z+c\mbf{1})-(y-b\mbf{1})\preceq n\mbf{1}-\mbf{0} = n\mbf{1}.\]
Hence, $x+(b+c)\mbf{1}\in [0, n]^2$, and similarly $y+(a+c)\mbf{1}\in [0, n]^2$ by symmetry. Now
\[ \mbf{0} = \mbf{0}+\mbf{0}\preceq (x-a\mbf{1})+(y-b\mbf{1}) = z-(a+b)\mbf{1} \preceq z\preceq n\mbf{1}\]
so $z-(a+b)\mbf{1}\in [0, n]^2$. Then, \cref{eq:in-sq} follows the convexity of $[0, n]^2$. By an affine transformation, we can apply now apply \cref{lem:nd-lines} to the segments in \cref{eq:in-sq}. The endpoints of the segments being half-integral introduces $O\left(1\right)$ error, so
\[ \cref{eq:2d-seg} \leq 2\left|[n]^2\cap \ell\left(\mbf{0}, (a+b+c)\mbf{1}\right)\right|+O\left(1\right)= 2(a+b+c)+O\left(1\right).\]
When $S=S^*_2$, its boundary intersects the segments in \cref{eq:in-sq} at $x, y, z$ respectively, so
\begin{align*}	
\cref{eq:2d-seg} &  = \left|[n]^2\cap \ell\left(x, x+(b+c)\mbf{1}\right)\right|+ \left|[n]^2\cap \ell\left(y, y+(a+c)\mbf{1}\right)\right|+\left|[n]^2\cap \ell\left(z-(a+b)\mbf{1}, z\right)\right|
\\ &  = \left|[n]^2\cap \ell\left(\mbf{0}, (b+c)\mbf{1}\right)\right|+ \left|[n]^2\cap \ell\left(\mbf{0}, (a+c)\mbf{1}\right)\right|+\left|[n]^2\cap \ell\left(\mbf{0}, (a+b)\mbf{1}\right)\right| +O\left(1\right)
\\ & = 2(a+b+c)+O\left(1\right)\qedhere
\end{align*}

\end{proof}
To combine the inequality on \cref{eq:2d-seg}, we construct a weight function on $\mc{T}$. The idea is similar to \cref{prop:weight}, but do it explicitly in the discrete setting, thereby obtaining a sharper error term.
\begin{lemma}\label{lem:2d-weights}
There exists weight function $w:\mc{T}\to \mb{R}_{\geq 0}$ such that $\sum_{e\in \mc{T}}w(e)=O\left(n\right)$, and the following holds for all but at most $O\left(1\right)$-many $v\in ([2n]/2)^2$ satisfying $\mbf{1}^\intercal v\in \{\lfloor 4n/5 \rfloor, 2\lfloor 4n/5 \rfloor\}$:
\begin{equation}\label{eq:2d-marginal-wt}
    \sum_{e \in \mc{T}:v\in e} w(e)=1.
\end{equation}
\end{lemma}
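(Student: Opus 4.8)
The plan is to exhibit $w$ explicitly as a $\{0,1\}$-valued function supported on a carefully chosen matching between the two relevant slices. This is the discrete counterpart of the compatibility of $\left(\U\left(P^2_{u_2}\right),\U\left(P^2_{u_2}\right),\U\left(P^2_{2u_2}\right)\right)$ with $u_2=4/5$ --- the case $a=b=u_2$, $c=2u_2$ of \cref{lem:2coupling} --- but carried out by hand so as to yield the sharp $O(1)$ error, rather than the $O(n^{1/2})$ loss one would incur from the generic discretization of \cref{thm:discr}.

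\emph{Reformulation.} Write $m:=\fl{4n/5}$, so that $m<n<2m$. The slices $L_m:=\{v\in([2n]/2)^2:\mbf{1}^\intercal v=m\}$ and $L_{2m}:=\{v\in([2n]/2)^2:\mbf{1}^\intercal v=2m\}$ are each parametrized by their first coordinate, yielding bijections $L_m\leftrightarrow A:=\{1/2,1,\dots,m-1/2\}$ and $L_{2m}\leftrightarrow C:=\{2m-n,2m-n+1/2,\dots,n\}$, so that $|A|=2m-1$ and $|C|=4(n-m)+1$. A triple of $\mc{T}$ is exactly the datum of a pair $(x_1,y_1)\in A\times A$ with $x_1+y_1\in C$ (then $z_1=x_1+y_1$, and the remaining coordinates are forced), and \cref{eq:2d-marginal-wt} asks that every $a\in A$ lie in triples of total $w$-weight $1$ and every $c\in C$ be the third coordinate of triples of total $w$-weight $1$. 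The only structural input we need is $|A|=2|C|+O(1)$, which is the discrete shadow of $\on{vol}_1(P^2_{u_2})=2\on{vol}_1(P^2_{2u_2})$, i.e.\ of the stationarity $f'(u_2)=0$ (see \cref{eq:def-f'}) selecting $u_2=4/5$.

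\emph{The construction.} Scale $A$ and $C$ by $2$ to the integer intervals $A'=\{1,\dots,2m-1\}$ and $C'=\{4m-2n,\dots,2n\}$. The heart of the matter is the claim that one can partition all but one element of $A'$ into $m-1$ pairs with pairwise distinct sums, all lying in $C'$. Granting this, for each pair $\{p,q\}$ with $p<q$ put $w=1$ on the unique triple $(x,y,z)\in\mc{T}$ with $(x_1,y_1,z_1)=(p/2,q/2,(p+q)/2)$, and $w=0$ on every other triple; then $\sum_{e\in\mc{T}}w(e)=m-1=O(n)$. If $2a\in A'$ is matched, then $a$ lies in exactly one triple of the support --- as $x$ if $2a$ is the smaller element of its pair and as $y$ otherwise --- so its marginal equals $1$, and the one unmatched element of $A'$ is the only exception on $L_m$. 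If $2c$ is one of the (distinct!) pair sums, then $c$ is the third coordinate of exactly one triple, so its marginal equals $1$, and the $|C'|-(m-1)=4n-5m+2=O(1)$ elements of $C$ that fail to be pair sums are the only exceptions on $L_{2m}$. Thus \cref{eq:2d-marginal-wt} holds off an $O(1)$ set.

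\emph{Producing the matching, and the main obstacle.} Assume first that $m$ is even. Split $A'\setminus\{m\}$ into the low block $\{1,\dots,m-1\}$ and high block $\{m+1,\dots,2m-1\}$, each of size $m-1$, and look for a bijection $\tau$ between them with $\{\,i+\tau(i)\,\}$ a block of $m-1$ consecutive integers; since $\sum_{a\in A'\setminus\{m\}}a=2m(m-1)$, this block is forced to be $\left\{\frac{3m+2}{2},\dots,\frac{5m-2}{2}\right\}$, which lies inside $C'$ precisely because $5m\le 4n$, i.e.\ because $m=\fl{4n/5}$. Writing $\tau(i)=m+\rho(i)$, such a $\tau$ is the same as a permutation $\rho$ of $\{1,\dots,m-1\}$ with $\{\,i+\rho(i)\,\}$ a run of consecutive integers --- a standard ``graceful'' permutation one writes down explicitly (e.g.\ by sending an initial segment of indices onto the even values and the rest onto the odd values of $\{1,\dots,m-1\}$ via two decreasing affine maps, the split point chosen from $m\bmod 4$ so that the images tile and the sums interleave into one run). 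For $m$ odd one instead omits $(m+1)/2$ from $A'$, repeats the argument with the forced run shifted to $\left\{\frac{3(m+1)}{2},\dots,\frac{5m-1}{2}\right\}\subset C'$, and absorbs the extra discrepancy into the $O(1)$ exceptions; non-integrality of $4n/5$ only moves the endpoints of $A'$ and $C'$ by $O(1)$. The one real step is thus this graceful pairing together with the verification that its sums land in $C'$; morally it works only because of the ratio $|A|/|C|\to 2$ --- exactly enough room to pair $L_m$ onto $L_{2m}$, which is why $u_2=4/5$ --- with everything else being routine $O(1)$ bookkeeping, and it is precisely this sharper-than-$O(\sqrt n)$ behavior that lets the argument recover the error term of \cite{ER17}.
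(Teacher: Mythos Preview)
Your proposal is correct and follows essentially the same approach as the paper: both produce a $\{0,1\}$-valued weight function by exhibiting an explicit near-matching between the two half-integer slices, verified to cover each slice up to $O(1)$ exceptions. The only real difference is in the specific pairing chosen. The paper splits the lower slice into four consecutive blocks and uses two parallel shifts (its ``Type~1'' and ``Type~2'' families), whose sums form two interleaving arithmetic progressions that together cover the upper slice; you instead split into two halves and appeal to a graceful permutation of $\{1,\dots,m-1\}$ so that the pair-sums form a single run of consecutive integers. Both constructions unwind to ``split into pieces and pair by affine maps so the sums tile,'' and both rest on the same numerical coincidence $|A|=2|C|+O(1)$ coming from $u_2=4/5$. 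The paper's version is entirely self-contained (one just writes down the triples and checks), whereas your version routes through an auxiliary combinatorial fact; your treatment of the odd-$m$ case is a bit loose, but any slack there is absorbed by the $O(1)$ budget exactly as you say.
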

\begin{proof}
Let $m = \lfloor n / 10\rfloor$ and $r  = \lfloor 4n / 5\rfloor - 8 \lfloor n / 10\rfloor$, so that $8m + r = \lfloor 4n / 5\rfloor$. Define the weight function $w$ to be $1$ on the following triples for all $k \in \{-2m, -2m+1, \cdots, 2m\}$, and $0$ otherwise:
{\small
\begin{gather*}
\left(\left(7m + r + \frac{k}{2}, m - \frac{k}{2}\right), \left(m + \frac{k}{2}, 7m + r - \frac{k}{2}\right), \left(8m + r + \frac{2k}{2}, 8m  + r - \frac{2k}{2}\right)\right), \\
\left(\left(5m + r + \frac{k+1}{2}, 3m - \frac{k+1}{2}\right), \left(3m + \frac{k}{2}, 5m + r - \frac{k}{2}\right), \left(8m + r + \frac{2k+1}{2}, 8m + r - \frac{2k+1}{2}\right)\right).
\end{gather*}
}
Then, for all but at most $O\left(1\right)$-many $v\in ([2n]/2)^2$ with $\mbf{1}^\intercal v\in \{\lfloor 4n/5 \rfloor, 2\lfloor 4n/5 \rfloor\}$, \cref{eq:2d-marginal-wt} holds as only one summand is nonzero, and it is equal to $1$. The total weight is $O\left(m\right)$ which is $O\left(n\right)$.
\end{proof}

We will now combine \cref{lem:2d-lines,lem:2d-weights} to show \cref{lem:2d-stability}.
\begin{proof}[Proof of \cref{lem:2d-stability}]
Similar to before, for all $v\in ([2n]/2)^2$ such that $\mbf{1}^\intercal v = \lfloor 4n/5 \rfloor$, define
\[\lambda_-(v) = \left|S\cap \ell(v-n\alpha\mbf{1}, v+n(\alpha+\beta)\mbf{1})\right| \quad\text{and}\quad
    \lambda_-^*(v) = \left|S^*_2\cap \ell(v-n\alpha\mbf{1}, v+n(\alpha+\beta)\mbf{1})\right|,\]
and  for all $v\in ([2n]/2)^2$ such that $\mbf{1}^\intercal v = 2\lfloor 4n/5 \rfloor$, define 
\[\lambda_+(v) = \left|S\cap\ell(v-2n\alpha\mbf{1}, v+n\beta\mbf{1})\right| \quad\text{and}\quad
    \lambda_+^*(v) = \left|S^*_2\cap \ell(v-2n\alpha\mbf{1}, v+n\beta\mbf{1})\right|.\]
For any $(x, y, z)\in\mc{T}$, let $a, b\in [0, n\alpha]$ and $c\in [0, n\beta]$ each be maximal such that $x-a\mbf{1}, y-b\mbf{1}, z+c\mbf{1}\in [0, n]^2$. They exist since $x, y, z\in [0, n]^2$, so $a=b=c=0$ works. By \cref{lem:2d-lines},
\begin{equation}\label{eq:2d-opt-lines}
    \lambda_-(x)+\lambda_-(y)+\lambda_+(z) \leq \lambda_-^*(x)+\lambda_-^*(y)+\lambda_+^*(z)+O\left(1\right).
\end{equation}

As $3\alpha+\beta \leq 4/5$, the segments in $\lambda_-(v)$ and $\lambda_+(v)$ are pairwise disjoint and their union covers $R(\alpha, \beta)$. For $S\subset R(\alpha, \beta)$, we use \cref{eq:2d-opt-lines} with weights from \cref{lem:2d-weights} to upper bound

{
\setlength{\jot}{10pt}
\begin{align*}
\left|S\right|& = \sum_{{v\in ([2n]/2)^2: \mbf{1}^\intercal v = \lfloor4n/5\rfloor}}\lambda_-(v) + \sum_{{v\in ([2n]/2)^2: \mbf{1}^\intercal v = 2 \lfloor 4n/5 \rfloor}}\lambda_+(v)
\\ & = O\left(n\right)+ \sum_{(x, y, z)\in \mc{T}} w(x, y, z)\left(\lambda_-(x)+\lambda_-(y)+\lambda_+(z)\right)
\\ &\leq  O\left(n\right)+ \sum_{(x, y, z)\in \mc{T}} w(x, y, z)\left(\lambda_-^*(x)+\lambda_-^*(y)+\lambda_+^*(z) +O\left(1\right)\right)
\\ & = O\left(n\right)+\sum_{{v\in ([2n]/2)^2:\mbf{1}^\intercal v = \lfloor 4n/5 \rfloor }}\lambda_-^*(v) + \sum_{{v\in ([2n]/2)^2: \mbf{1}^\intercal v = 2 \lfloor 4n/5 \rfloor }}\lambda_+^*(v)
\\ & = \left|S^*_2\right|+O\left(n\right).\qedhere
\end{align*}
}
\end{proof}

\subsection{Numerical computations}
To prove \cref{thm:2d-main}, we need to rule out instances where some sum-free $S\subset [n]^2$ contains points outside stability region $R(\alpha, \beta)$ for some $\alpha$ and $\beta$. The idea is that having any point at the corners near $\mbf{0}$ or $\mbf{1}$ imposes a lot of conditions, so it cuts out many elements from $S$. More precisely, we have the following two lemmas, corresponding to the two corners.
\begin{lemma}\label{lem:upper}
If sum-free $S\subset [n]^2$ contains some $v$ with $\mbf{1}^\intercal v \geq 17n/10$, then $\left| S\right| \leq 3n^2/5+O\left(n\right)$.
\end{lemma}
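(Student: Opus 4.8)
The plan is a direct combinatorial argument. Since $[n]^2$, sum-freeness, and the desired conclusion are all symmetric under swapping the two coordinates, I would first assume $v=(v_1,v_2)\in S$ with $v_1\ge v_2$ and $v_1+v_2\ge 17n/10$, so that $v_1\ge 17n/20$ and $v_2\ge 7n/10$.

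The first step uses a single involution. Let $B:=[1,v_1-1]\times[1,v_2-1]$, the set of $x\in[n]^2$ with $v-x\in[n]^2$. Since $v\in S$ and $S$ is sum-free, for each $x\in B$ at most one of $x,v-x$ lies in $S$; as $x\mapsto v-x$ is an involution of $B$ with at most one fixed point, $|S\cap B|\le\tfrac12|B|+O(1)=\tfrac12 v_1v_2+O(n)$, whence
\[
|S|\le |S\cap B|+\bigl|[n]^2\setminus B\bigr|\le n^2-\tfrac12 v_1v_2+O(n).
\]
This already gives $|S|\le 3n^2/5+O(n)$ whenever $v_1v_2\ge 4n^2/5$, which covers every $v$ with $v_1+v_2\ge 9n/5$ (on that line the minimum of $v_1v_2$, attained at $v_1=n$, equals $4n^2/5$). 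So the real work lies in the regime $v_1v_2<4n^2/5$, where $v_1\ge v_2$ and $v_1+v_2\ge 17n/10$ confine $v$ to a thin lens with $v_1$ near $n$ and $v_2$ near $7n/10$.

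There the crude bound wastes too much on the complement of $B$, and I would bring in sum-freeness across regions. Consider first the hardest sub-case $v_1=n$, where $[n]^2\setminus B$ is, up to $O(n)$ points, the top strip $T:=\{x\in[n]^2:x_2\ge v_2\}$. Sums of two points of $T$ leave $[n]^2$, so $S\cap T$ is automatically sum-free and contributes at most $|T|$; on the other hand, for $q,q'\in S\cap B$ with $q+q'\in T\cap[n]^2$ one needs $q+q'\notin S$, and for $p\in S\cap T$, $q\in S\cap B$ with $p+q\in[n]^2$ one needs $p+q\notin S$. Reading these off by horizontal rows — a point of row $i$ plus a point of row $j$ lands in row $i+j$ — yields a linear system on the row-counts $b_i:=|S\cap(\text{row }i)|$: the $v$-involution forces $b_i+b_{v_2-i}\le n$, while the cross constraints push the supports of the low rows of $B$ into the upper half of $[n]$, which costs an extra $\Theta(n^2)$. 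Maximizing $\sum_i b_i$ over this system gives $|S|\le 3n^2/5+O(n)$; the general $v$ in the lens is then handled by an analogous, bookkeeping-heavier version, and the whole thing amounts to a finite optimization — the upper-right-corner analogue of the computation behind the companion lower-left-corner lemma.

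The step I expect to be the main obstacle is this last one: the bilinear trade-off between the part of $S$ inside the box $B$ (constrained only by the $v$-involution) and the part outside it (constrained only by sum-freeness), and verifying that the worst admissible $v$ — near $(n,7n/10)$ — still closes at exactly $3n^2/5$. The threshold $17n/10$ is the smallest coordinate sum for which this refined estimate works (the involution alone would require $9n/5$), and it is calibrated so that, together with the lower-corner lemma, every sum-free $S$ not handled by these two lemmas lies in a region $R(\alpha,\beta)$ to which \cref{lem:2d-stability} applies.
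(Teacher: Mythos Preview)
Your involution argument is correct and gives $|S|\le n^2 - \tfrac12 v_1v_2 + O(n)$, which indeed closes the lemma whenever $v_1+v_2\ge 9n/5$. The gap is the ``hard regime'' $17n/10 \le v_1+v_2 < 9n/5$: there you only sketch a row-by-row linear program and assert that its optimum is $3n^2/5$, but you do not carry this out, and it is not clear that the constraints you name are enough. At the extremal point $v=(n,7n/10)$ your crude bound gives $13n^2/20$, so you need to recover another $n^2/20$; the cross-constraints you describe (``push the supports of the low rows into the upper half'') relate positions, not just row-counts, and it is not obvious they survive passage to the row-count relaxation. As written, this part is a hope rather than a proof.

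The paper avoids this difficulty entirely by a single change of viewpoint: instead of working with the given point $v$, take $(x,y)\in S/n$ that \emph{maximizes the product} $r=xy$. The hypothesis forces $r\ge 7/10$ (since on $\{v_1+v_2=17/10\}\cap[0,1]^2$ the product is at least $1\cdot 7/10$). Now two things come for free: the same involution gives $|S\cap([0,x]\times[0,y])|\le (r/2)n^2+O(n)$, \emph{and} the maximality of $r$ gives the containment $S/n\subset\{(a,b)\in[0,1]^2:ab\le r\}$, whose area is $r+\int_r^1 (r/t)\,dt=r-r\log r$. Combining,
\[
\frac{|S|}{n^2}\le \frac{r}{2}-r\log r + O(n^{-1}),
\]
and $r/2-r\log r$ is decreasing on $[7/10,1]$ with value $7/20-(7/10)\log(7/10)=0.599\ldots<3/5$ at $r=7/10$. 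This is Cameron's argument, restricted to $r\ge 7/10$. Your approach throws away the second piece of information (that no point of $S$ lies above the hyperbola $ab=r$), and that is exactly what forces you into the unfinished case analysis.
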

\begin{proof}[Proof of \cref{lem:upper}]
We rescale $[n]^2$ to the unit square $[0, 1]^2$. The neglected boundary effect gives $O\left(n\right)$ error. Let $(xn, yn)$ be the point in $S$ that maximizes $r = xy$. If $\mbf{1}^\intercal v \geq 17n/10$ for some $v\in S$, then $r\geq 7/10$. For every $p\in [0, x]\times [0, y]$, $p$ and $v-p$ cannot both be in $S$ as they sum to $v$, so we cut out an area of $r/2$. By maximality of $r$,
\[ \frac{\left|S\right|-O\left(n\right)}{n^2} \leq r+\int_r^1 \frac{r}{t}dt -\frac{r}{2}= \frac{r}{2}-r\log r\leq \frac{7}{20}-\frac{7}{10}\log \left(\frac{7}{10}\right)=0.599\ldots < 3/5,\]
where we note that $r/2-r\log r$ is decreasing on $[7/10, 1]$, so we can bound it by its value at $7/10$.
\end{proof}
\begin{remark}
This proof is a modification of an argument due to Cameron \cite{Cam02}: upon obtaining the upper bound in $r$ in the display, \cite{Cam02} obtained a global upper bound by maximizing it over $r$. The maximizer and maximum both turn out to be $1/\sqrt{e}= 0.606\ldots$. 
\end{remark}

\begin{lemma}\label{lem:lower}
If sum-free $S\subset [n]^2$ contains some $v$ with $\mbf{1}^\intercal v \leq 17n/30$, then $\left| S\right| \leq 3n^2/5+O\left(n\right)$.
\end{lemma}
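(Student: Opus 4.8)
The plan is to imitate the proof of \cref{lem:upper}, but with lines in an axis direction replaced by lines in the direction of the small point $v$. Rescale $[n]^2$ to $[0,1]^2$; by \cref{fact:ehr} the passage from lattice points to volume and the discarded boundary cost only $O(n)$, and I write $|\cdot|$ for area. Suppose $S$ is sum-free and contains $v=(v_1,v_2)$ with $\mbf 1^\intercal v\le 17/30$, and choose $v$ with $\mbf 1^\intercal v=:m$ minimal, so $S\subseteq\{p:\mbf 1^\intercal p\ge m\}$. If some point of $S$ has $\mbf 1^\intercal$-value at least $17/10$ we are done by \cref{lem:upper}, so I also assume $S\subseteq\{p:\mbf 1^\intercal p<17/10\}$. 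Assume $v_1\le v_2$ without loss.

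The point is that $v\in S$ and sum-freeness force $(S+v)\cap S=\varnothing$: otherwise some $p\in S$ yields a Schur triple $p,v,p+v$. Now partition the plane into the family of lines $\ell$ parallel to $v$. Translation by $v$ shifts each $\ell$ by a fixed amount, so $S\cap\ell$ contains no two points at distance $|v|$ along $\ell$, and $S\cap\ell$ lies in the sub-interval of $\ell$ cut out by $m\le\mbf 1^\intercal p<17/10$. Parametrising $\ell$ so a $v$-step becomes a unit step, $S\cap\ell$ therefore lies in an interval of some length $L_\ell$ — an explicit function of the position of $\ell$, the direction of $v$, and the two cutoffs, since $\mbf 1^\intercal p$ grows at rate $m$ per unit along $\ell$ — and avoids distance-$1$ pairs, whence $|S\cap\ell|\le g(L_\ell)$, where $g(L):=\sup\{|A|:A\subseteq[0,L],\ (A+1)\cap A=\varnothing\}$ is the piecewise-linear function with $g(L)=L$ on $[0,1]$, $g(L)=1$ on $[1,2]$, $g(L)=L-1$ on $[2,3]$, and $g(L)=L-\lfloor L/2\rfloor$ (resp.\ $\lceil L/2\rceil$) on the later pieces. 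Integrating over all lines by the coarea formula gives $|S|\le\int g(L_\ell)\,d\ell$, a completely explicit quantity $F(v)$; it remains to check $F(v)\le 3/5$ for all $0\le v_1\le v_2$ with $v_1+v_2\le 17/30$. (Dually, the thresholds in \cref{lem:upper} and here are chosen so that the leftover band $\{17/30\le\mbf 1^\intercal p<17/10\}$ is covered by \cref{lem:2d-stability} with $\alpha=7/30$, $\beta=1/10$, giving \cref{thm:2d-main}.)

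The main obstacle is the verification $F(v)\le 3/5$. The line-length profile of $[0,1]^2$ in direction $v$ changes shape with the direction — all lines full-height when $v\parallel(0,1)$, very long central lines when $v\parallel(1,1)$ — so $F$ is a sum of a few explicit pieces, and the cutoff $\mbf 1^\intercal p\ge m$, which removes roughly one unit from the bottom of each line, is indispensable: without it the per-line bound only gives $|S|\le 2/3$. The worst case is $m=17/30$ (larger $m$ enlarges $|v|$ and shrinks every $L_\ell$), and there the argument is actually clean: then $L_\ell\le(17/10-m)/m=2$ on every line, so $g(L_\ell)=\min(L_\ell,1)$, and hence $|S|\le\int\min(L_\ell,1)\,d\ell\le\int 1\,d\ell=v_1+v_2=17/30<3/5$, using that the total measure of the lines (in the normalisation $\int L_\ell\,d\ell=1$) equals $v_1+v_2$. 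For $m<17/30$ one instead invokes the exact $g$ together with the geometry of the slices — this is the computation the lemma's ``numerical'' part carries out.
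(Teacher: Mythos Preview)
Your approach is essentially the same as the paper's: both pick $v\in S$ of minimal $\mathbf 1^\intercal v=m$, use $(S+v)\cap S=\varnothing$ together with $S\subset\{m\le\mathbf 1^\intercal p<17/10\}$, and bound $|S|$ line-by-line along lines parallel to $v$. Your function $g$ is precisely the tight per-line bound given these constraints, and your argument at $m=17/30$ is correct and clean: the band has $\mathbf 1^\intercal$-width $34/30$, which is two $v$-steps, so $L_\ell\le 2$, $g\le 1$, and $\int g\,d\ell\le\int 1\,d\ell=m<3/5$.

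The genuine gap is the case $m<17/30$. Your assertion that ``the worst case is $m=17/30$'' is false: for $v=(17/80,17/80)$ (so $m=17/40$) a direct computation gives $\int g(L_\ell)\,d\ell\approx 0.575>17/30$. Your heuristic (``larger $m$ enlarges $|v|$ and shrinks every $L_\ell$'') is misleading because the transverse measure $d\ell$ also scales with $|v|$ --- indeed $\int 1\,d\ell=m$ grows --- so $\int g\,d\ell$ is not monotone in $m$, and the staircase shape of $g$ makes values of $m$ for which many $L_\ell$ sit near an odd integer genuinely dangerous. For $m<17/30$ the bound depends on both $m$ and the direction of $v$, and checking $\int g(L_\ell)\,d\ell\le 3/5$ over this two-parameter family is the entire content of the lemma. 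The paper carries this out via a five-case analysis on $\lceil 1/\max(v_1,v_2)\rceil$: it partitions the square into $L$-shaped shells $R_t=\{p:p-tv\notin[0,1]^2,\ p-(t-1)v\in[0,1]^2\}$ --- exactly the unit intervals along each line in your normalisation --- pairs even-indexed shells against odd ones, and adds corrections from the excluded corners $L$ and $U$, with explicit area computations in each case. Your $g$-formulation is at least as tight per line, so the verification would go through, but you have not done it, and the monotonicity shortcut you propose does not work.
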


The proof of \cref{lem:lower} requires significant computation. Before that, we first we show how to combine \cref{prop:lower,lem:2d-stability,lem:upper,lem:lower} to deduce \cref{thm:2d-main}.
\begin{proof}[Proof of \cref{thm:2d-main}]
Recall that \cref{prop:lower} gives the lower bound. For the upper bound, we fix any sum-free set $S\subset [n]^2$ and do case work on possible values of $\mbf{1}^\intercal v$ for $v\in S$.
\begin{itemize}
	\item If some $v\in S$ satisfies $\mbf{1}^\intercal v \geq 17n/10$, \cref{lem:upper} gives the upper bound $\left|S\right|\leq 3n^2/5+O\left(n\right)$.
	\item If some $v\in S$ satisfies $\mbf{1}^\intercal v \leq 17n/30$, \cref{lem:lower} gives the upper bound $\left|S\right|\leq 3n^2/5+O\left(n\right)$.
	\item Otherwise, $S\subset R(\alpha, \beta)$ for $\alpha = 7/30$ and $\beta = 1/10$, as defined in \cref{eq:stab-region}. We can check that $3\alpha+\beta = 4/5$, so \cref{lem:2d-stability} applies to show $\left|S\right| \leq {3}n^2/{5} +O\left(n\right)$.\qedhere
\end{itemize}
\end{proof}
\begin{proof}[Proof of \cref{lem:lower}]
We rescale $[n]^2$ to the unit square $[0, 1]^2$. The neglected boundary effect gives $O\left(n\right)$ error. Let $v=(a, b)$ be the point in $S/n$ with the smallest $L^1$-norm, so $a+b\leq 17/30$. Without loss of generality, assume $a\geq b$. Define the regions
\[ U=\{p\in [0, 1]^2:\mbf{1}^\intercal p\geq 17/10\} \quad \text{and} \quad L = \{p\in [0, 1]^2: \mbf{1}^\intercal p \leq a+b\}.\]
By definition of $v$ and \cref{lem:upper}, we can assume $S/n$ is disjoint from $U$ and $L$. Observe that for any $A, B\subset [0, 1]^2$ such that $B\subset v+A$, each $p\in B$ and $v+p\in A$ cannot both be in $S/n$, so
\begin{equation}\label{eq:pair-inj}
	\left| \frac{S}{n}\cap(A\cup B)\right| \leq \on{Area}(A)\cdot  n^2 +O\left(n\right),
\end{equation}
which we will refer to as ``cutting out region $B$''. For each positive integer $t$, define $L$-shaped regions \[ R_t := \left\{p\in [0, 1]^2: p-tv\not\in [0, 1]^2, p-(t-1)v\in [0, 1]^2\right\}.\]
We see that $R_t - v \subset R_{t-1}$, allowing us to apply \cref{eq:pair-inj}. Moreover, the area of these regions are
\begin{equation}\label{eq:area-Rt}
\on{Area}(R_t) = \begin{cases}
a+b-(2t-1)ab &\text{if }1\leq t<\lceil{1/a}\rceil\\
(1-(t-1)a)(1-(t-1)b) &\text{if }t=\lceil{1/a}\rceil\\
0  &\text{if }t> \lceil{1/a}\rceil
\end{cases}.
\end{equation}

We casework on $\lceil 1/a\rceil$ and cut out an area of at least $2/5$ in all cases, i.e. $(\left|\ol{S}\right|+O\left(n\right))/n^2\geq 2/5$. In the figures below, $U$ and $L$ are colored orange, and we cut them out trivially. The diagonal lines $x+y=t(a+b)$ for positive integer $t$ are colored purple and $x+y=17/10$ is colored green. Using \cref{eq:pair-inj}, we will also cut out blue, red, and pink regions, with the latter two requiring more careful boundary analysis. Grey regions will also require boundary analysis, except we will not cut them out.\\

\tikzset{every picture/.style={line width=0.75pt}} %set default line width to 0.75pt        

\begin{tikzpicture}[x=0.75pt,y=0.75pt,yscale=-0.95,xscale=0.95]
%uncomment if require: \path (0,300); %set diagram left start at 0, and has height of 300

%Shape: Square [id:dp8531270356327909] 
\draw   (30,30) -- (153.89,30) -- (153.89,153.89) -- (30,153.89) -- cycle ;
%Shape: Rectangle [id:dp926398258354094] 
\draw  [draw opacity=0][fill={rgb, 255:red, 74; green, 144; blue, 226 }  ,fill opacity=0.6 ][line width=0.75]  (100,30) -- (153.89,30) -- (153.89,130) -- (100,130) -- cycle ;
%Shape: Rectangle [id:dp8455121578696774] 
\draw   (191.06,30) -- (314.96,30) -- (314.96,153.89) -- (191.06,153.89) -- cycle ;
%Straight Lines [id:da5046832002575878] 
\draw [color={rgb, 255:red, 0; green, 0; blue, 0 }  ,draw opacity=1 ]   (240.62,141.5) -- (240.62,79.56) ;
%Shape: Right Triangle [id:dp4132092384313628] 
\draw  [draw opacity=0][fill={rgb, 255:red, 245; green, 166; blue, 35 }  ,fill opacity=0.6 ] (191.06,91.95) -- (253.01,153.89) -- (191.06,153.89) -- cycle ;
%Straight Lines [id:da0003122506444814821] 
\draw [color={rgb, 255:red, 0; green, 0; blue, 0 }  ,draw opacity=1 ]   (240.62,141.5) -- (302.57,141.5) ;
%Straight Lines [id:da49159706983652085] 
\draw [color={rgb, 255:red, 144; green, 19; blue, 254 }  ,draw opacity=1 ]   (191.06,91.95) -- (253.01,153.89) ;
%Shape: Polygon [id:ds4922069841665593] 
\draw  [draw opacity=0][fill={rgb, 255:red, 74; green, 144; blue, 226 }  ,fill opacity=0.6 ] (240.62,30) -- (290.18,30) -- (290.18,129.12) -- (290.18,129.12) -- (240.62,79.56) -- cycle ;
%Shape: Polygon [id:ds12201725324916235] 
\draw  [draw opacity=0][fill={rgb, 255:red, 74; green, 144; blue, 226 }  ,fill opacity=0.6 ] (290.18,129.12) -- (314.96,129.12) -- (314.96,141.5) -- (302.57,141.5) -- (290.18,129.12) -- cycle ;
%Straight Lines [id:da08307394467523999] 
\draw [color={rgb, 255:red, 144; green, 19; blue, 254 }  ,draw opacity=1 ]   (240.62,79.56) -- (302.57,141.5) ;
%Shape: Polygon [id:ds9238871204884762] 
\draw  [draw opacity=0][fill={rgb, 255:red, 245; green, 166; blue, 35 }  ,fill opacity=0.6 ] (290.18,30) -- (314.96,30) -- (314.96,30) -- (314.96,67.17) -- (290.18,42.39) -- cycle ;
%Straight Lines [id:da6027246483260453] 
\draw [color={rgb, 255:red, 126; green, 211; blue, 33 }  ,draw opacity=1 ]   (290.18,42.39) -- (314.96,67.17) ;
%Shape: Polygon [id:ds1129815794970781] 
\draw  [draw opacity=0][fill={rgb, 255:red, 208; green, 2; blue, 27 }  ,fill opacity=0.6 ] (290.18,79.56) -- (314.96,104.34) -- (314.96,129.12) -- (314.96,129.12) -- (290.18,129.12) -- cycle ;
%Straight Lines [id:da7099889414647131] 
\draw [color={rgb, 255:red, 144; green, 19; blue, 254 }  ,draw opacity=1 ]   (290.18,79.56) -- (314.96,104.34) ;
%Shape: Rectangle [id:dp35444031725931247] 
\draw   (352.12,30) -- (476.02,30) -- (476.02,153.89) -- (352.12,153.89) -- cycle ;
%Straight Lines [id:da5664563249324559] 
\draw [color={rgb, 255:red, 0; green, 0; blue, 0 }  ,draw opacity=1 ]   (389.29,141.5) -- (389.29,91.95) ;
%Shape: Right Triangle [id:dp6092332975404311] 
\draw  [draw opacity=0][fill={rgb, 255:red, 245; green, 166; blue, 35 }  ,fill opacity=0.6 ] (352.12,104.34) -- (401.68,153.89) -- (352.12,153.89) -- cycle ;
%Straight Lines [id:da2002434999282312] 
\draw [color={rgb, 255:red, 0; green, 0; blue, 0 }  ,draw opacity=1 ]   (389.29,141.5) -- (438.85,141.5) ;
%Straight Lines [id:da8786409555266121] 
\draw [color={rgb, 255:red, 144; green, 19; blue, 254 }  ,draw opacity=1 ]   (352.12,104.34) -- (401.68,153.89) ;
%Straight Lines [id:da8074387462995041] 
\draw [color={rgb, 255:red, 144; green, 19; blue, 254 }  ,draw opacity=1 ]   (389.29,91.95) -- (438.85,141.5) ;
%Shape: Right Triangle [id:dp7942575132537546] 
\draw  [draw opacity=0][fill={rgb, 255:red, 208; green, 2; blue, 27 }  ,fill opacity=0.6 ] (426.46,79.56) -- (476.02,129.12) -- (426.46,129.12) -- cycle ;
%Shape: Polygon [id:ds20833784222086882] 
\draw  [draw opacity=0][fill={rgb, 255:red, 74; green, 144; blue, 226 }  ,fill opacity=0.6 ] (389.29,30) -- (426.46,30) -- (426.46,129.12) -- (426.46,129.12) -- (389.29,91.95) -- cycle ;
%Shape: Polygon [id:ds17610725883146738] 
\draw  [draw opacity=0][fill={rgb, 255:red, 74; green, 144; blue, 226 }  ,fill opacity=0.6 ] (426.46,129.12) -- (476.02,129.12) -- (476.02,141.5) -- (438.85,141.5) -- (426.46,129.12) -- cycle ;
%Shape: Polygon [id:ds03805564738924794] 
\draw  [draw opacity=0][fill={rgb, 255:red, 74; green, 144; blue, 226 }  ,fill opacity=0.6 ] (463.63,30) -- (476.02,30) -- (476.02,129.12) -- (476.02,79.56) -- (463.63,67.17) -- cycle ;
%Straight Lines [id:da3092370771474535] 
\draw [color={rgb, 255:red, 144; green, 19; blue, 254 }  ,draw opacity=1 ]   (426.46,79.56) -- (476.02,129.12) ;
%Shape: Polygon [id:ds3659722929353766] 
\draw  [draw opacity=0][fill={rgb, 255:red, 245; green, 166; blue, 35 }  ,fill opacity=0.6 ] (438.85,30) -- (463.63,30) -- (463.63,42.39) -- (463.63,54.78) -- (438.85,30) -- cycle ;
%Straight Lines [id:da5114846047184884] 
\draw [color={rgb, 255:red, 126; green, 211; blue, 33 }  ,draw opacity=1 ]   (438.85,30) -- (463.63,54.78) ;
%Shape: Polygon [id:ds22999405123245453] 
\draw  [draw opacity=0][fill={rgb, 255:red, 155; green, 155; blue, 155 }  ,fill opacity=0.6 ] (463.63,67.17) -- (476.02,79.56) -- (476.02,116.73) -- (476.02,116.73) -- (463.63,116.73) -- cycle ;
%Shape: Rectangle [id:dp25179179849930633] 
\draw   (513.19,30) -- (637.08,30) -- (637.08,153.89) -- (513.19,153.89) -- cycle ;
%Shape: Right Triangle [id:dp24032661543874512] 
\draw  [draw opacity=0][fill={rgb, 255:red, 245; green, 166; blue, 35 }  ,fill opacity=0.6 ] (513.19,116.73) -- (550.35,153.89) -- (513.19,153.89) -- cycle ;
%Straight Lines [id:da28151661854546095] 
\draw [color={rgb, 255:red, 144; green, 19; blue, 254 }  ,draw opacity=1 ]   (513.19,116.73) -- (550.35,153.89) ;
%Shape: Right Triangle [id:dp4814697444449698] 
\draw  [color={rgb, 255:red, 0; green, 0; blue, 0 }  ,draw opacity=1 ] (537.96,104.34) -- (575.13,141.5) -- (537.96,141.5) -- cycle ;
%Shape: Right Triangle [id:dp20638036153078154] 
\draw  [draw opacity=0][fill={rgb, 255:red, 208; green, 2; blue, 27 }  ,fill opacity=0.6 ] (562.74,91.95) -- (599.91,129.12) -- (562.74,129.12) -- cycle ;
%Shape: Right Triangle [id:dp0425670364069719] 
\draw  [draw opacity=0][fill={rgb, 255:red, 155; green, 155; blue, 155 }  ,fill opacity=0.7 ] (587.52,79.56) -- (624.69,116.73) -- (587.52,116.73) -- cycle ;
%Shape: Polygon [id:ds2674554015420134] 
\draw  [draw opacity=0][fill={rgb, 255:red, 74; green, 144; blue, 226 }  ,fill opacity=0.6 ] (537.96,30) -- (562.74,30) -- (562.74,129.12) -- (562.74,129.12) -- (537.96,104.34) -- cycle ;
%Shape: Polygon [id:ds8004433360374388] 
\draw  [draw opacity=0][fill={rgb, 255:red, 74; green, 144; blue, 226 }  ,fill opacity=0.6 ] (562.74,129.12) -- (637.08,129.12) -- (637.08,141.5) -- (575.13,141.5) -- (562.74,129.12) -- cycle ;
%Shape: Polygon [id:ds19014214109089744] 
\draw  [draw opacity=0][fill={rgb, 255:red, 74; green, 144; blue, 226 }  ,fill opacity=0.6 ] (587.52,30) -- (612.3,30) -- (612.3,104.34) -- (612.3,104.34) -- (587.52,79.56) -- cycle ;
%Shape: Polygon [id:ds6567888766889372] 
\draw  [draw opacity=0][fill={rgb, 255:red, 74; green, 144; blue, 226 }  ,fill opacity=0.6 ] (612.3,104.34) -- (637.08,104.34) -- (637.08,116.73) -- (624.69,116.73) -- (612.3,104.34) -- cycle ;
%Shape: Polygon [id:ds41626395541246497] 
\draw  [draw opacity=0][fill={rgb, 255:red, 207; green, 16; blue, 224 }  ,fill opacity=0.6 ] (612.3,67.17) -- (637.08,91.95) -- (637.08,104.34) -- (637.08,104.34) -- (612.3,104.34) -- cycle ;
%Shape: Polygon [id:ds6365212959082247] 
\draw  [draw opacity=0][fill={rgb, 255:red, 245; green, 166; blue, 35 }  ,fill opacity=0.6 ] (612.3,30) -- (637.08,30) -- (637.08,129.12) -- (637.08,67.17) -- (612.3,42.39) -- cycle ;
%Straight Lines [id:da9229521256054027] 
\draw [color={rgb, 255:red, 144; green, 19; blue, 254 }  ,draw opacity=1 ]   (537.96,104.34) -- (575.13,141.5) ;
%Straight Lines [id:da9916314680246296] 
\draw [color={rgb, 255:red, 144; green, 19; blue, 254 }  ,draw opacity=1 ]   (562.74,91.95) -- (599.91,129.12) ;
%Straight Lines [id:da21433647003454004] 
\draw [color={rgb, 255:red, 144; green, 19; blue, 254 }  ,draw opacity=1 ]   (587.52,79.56) -- (624.69,116.73) ;
%Straight Lines [id:da9456173267569528] 
\draw [color={rgb, 255:red, 126; green, 211; blue, 33 }  ,draw opacity=1 ]   (612.3,42.39) -- (637.08,67.17) ;
%Straight Lines [id:da9801528151747305] 
\draw [color={rgb, 255:red, 144; green, 19; blue, 254 }  ,draw opacity=1 ]   (612.3,67.17) -- (637.08,91.95) ;
%Straight Lines [id:da5033422804949159] 
\draw [color={rgb, 255:red, 144; green, 19; blue, 254 }  ,draw opacity=1 ]   (463.63,67.17) -- (476.02,79.56) ;
%Shape: Rectangle [id:dp5448164229393937] 
\draw  [draw opacity=0][fill={rgb, 255:red, 155; green, 155; blue, 155 }  ,fill opacity=0.6 ][line width=0.75]  (30,53.89) -- (83.89,53.89) -- (83.89,153.89) -- (30,153.89) -- cycle ;

% Text Node
\draw (97,130.4) node [anchor=north west][inner sep=0.75pt]  [font=\tiny]  {$v$};
% Text Node
\draw (278.62,128.94) node [anchor=north west][inner sep=0.75pt]  [font=\tiny]  {$2v$};
% Text Node
\draw (230.16,139.4) node [anchor=north west][inner sep=0.75pt]  [font=\tiny]  {$v$};
% Text Node
\draw (412.62,129.09) node [anchor=north west][inner sep=0.75pt]  [font=\tiny]  {$2v$};
% Text Node
\draw (378.46,139.98) node [anchor=north west][inner sep=0.75pt]  [font=\tiny]  {$v$};
% Text Node
\draw (451.08,115.44) node [anchor=north west][inner sep=0.75pt]  [font=\tiny]  {$3v$};
% Text Node
\draw (551.19,128.59) node [anchor=north west][inner sep=0.75pt]  [font=\tiny]  {$2v$};
% Text Node
\draw (527.63,139.98) node [anchor=north west][inner sep=0.75pt]  [font=\tiny]  {$v$};
% Text Node
\draw (574.58,116.05) node [anchor=north west][inner sep=0.75pt]  [font=\tiny]  {$3v$};
% Text Node
\draw (602.35,104.66) node [anchor=north west][inner sep=0.75pt]  [font=\tiny]  {$4v$};
% Text Node
\draw (73.19,157.85) node [anchor=north west][inner sep=0.75pt]  [font=\scriptsize]  {$Case\ ( 1)$};
% Text Node
\draw (244.16,157.85) node [anchor=north west][inner sep=0.75pt]  [font=\scriptsize]  {$Case\ ( 2)$};
% Text Node
\draw (403.68,157.29) node [anchor=north west][inner sep=0.75pt]  [font=\scriptsize]  {$Case\ ( 3)$};
% Text Node
\draw (566.4,157.85) node [anchor=north west][inner sep=0.75pt]  [font=\scriptsize]  {$Case\ ( 4)$};

\end{tikzpicture}

\begin{enumerate}
	\item Suppose $a\geq 1/2$. Note that $R_2\subset v+R_1$. By \cref{eq:pair-inj}, we can cut out at least $\on{Area}(R_2)$, so
		\[ \frac{\left|\ol{S}\right|+O\left(n\right)}{n^2}\geq \on{Area}(R_2)=(1-a)(1-b),\]
	which in this case is minimized at $(a,b)=(17/30, 0)$ with value $13/30 > 2/5$.
  
	\item Suppose $1/3\leq a <1/2$. Note that $R_2 \setminus (v+L)\subset v+(R_1\setminus L)$ and $(v+L)\cap [0, 1]^2\subset v+L$, so we can cut out at least $\on{Area}(R_2)$ from the region $R_1\cup R_2$. Also, we see that the red region $(2v+L)\cap [0, 1]^2\subset v+(v+L)\cap [0, 1]^2$, so we also cut it out. It has area
		{\small \[ \frac{1}{2}(a+b)^2 -\frac{1}{2}\max(0,3a+b-1)^2 - \frac{1}{2}\max(0,3b+a-1)^2 \geq \frac{1}{2}(a+b)^2-\frac{1}{2}(3a+b-1)^2-\frac{1}{2}\left(\frac{1}{30}\right)^2,\]}
	where we bound the third term below by checking that $3b+a-1\leq 1/30$, achieved at $(a, b)=(1/3, 7/30)$. We also exclude the orange region $U\cap ([2a,1]\times [2b, 1])$ with area
		{\small \[ \frac{1}{2}\left(2-\frac{17}{10}\right)^2 -\frac{1}{2}\max\left(0,2a-\frac{7}{10}\right)^2 - \frac{1}{2}\max\left(0,2b-\frac{7}{10}\right)^2\geq \frac{9}{200}-\frac{1}{2}\left(2a-\frac{7}{10}\right)^2,\]}
	where we see the third term is $0$ as $b\leq 17/30-a \leq 7/30$. Hence, in total we cut out
	{\small \begin{align*}
			\frac{\left|\ol{S}\right|+O\left(n\right)}{n^2} & \geq \on{Area}(R_2)+\on{Area}\left((2v+L)\cap [0, 1]^2\right)+\on{Area}\left(U\cap ([2a,1]\times [2b, 1])\right)
			\\ &= a+b-3ab + \frac{1}{2}(a+b)^2 - \frac{1}{2}(3a+b-1)^2 - \frac{1}{1800}+ \frac{9}{200} - \frac{1}{2}\left(2a-\frac{7}{10}\right)^2,
		\end{align*}}
	which in this case is minimized at $(a,b)=(1/3, 0)$ with value $0.432\ldots > 2/5$.
	\item Suppose $1/4\leq a <1/3$. As before, we cut out at least $\on{Area}(R_2)$ from the region $R_1\cup R_2$. Now, the red region $(2v+L)\cap [0, 1]^2\subset v+(v+L)\cap [0, 1]^2$ and contains disjoint translations of the grey region $(3v+L)\cap [0, 1]^2$ and trapezoid $L\cap ([1-3a, 1-2a]\times[0, 1])$. We compute
    {\small \begin{equation}\label{eq:slab}
    \begin{aligned}
    \on{Area}\left(L\cap ([1-3a, 1-2a]\times[0, 1])\right) & = \frac{1}{2}\max(0, 4a+b-1)^2-\frac{1}{2}\max(0, 3a+b-1)^2
    \\ & \geq \frac{1}{2}(4a+b-1)^2-\frac{1}{2}(3a+b-1)^2,
    \end{aligned}
    \end{equation}}
    where we use that $a\geq 1/4$. Also, this means the orange region $U\setminus R_4$ has area $(3a-7/10)^2/2$. The blue region $R_4\setminus (3v+L)\subset v+R_3\setminus (2v+L)$, so we also cut it out. In total, we cut out
  {\small 
		\begin{align*}
			\frac{\left|\ol{S}\right|+O\left(n\right)}{n^2} & \geq 
			\on{Area}(R_2) +\on{Area}((2v+L)\cap [0, 1]^2) +\on{Area}\left(R_4\setminus (3v+L)\right)+\on{Area}\left(U\setminus R_4\right)
			\\ &\geq \on{Area}(R_2) +\on{Area}(R_4) + \on{Area}\left(L\cap ([1-3a, 1-2a]\times[0, 1])\right)+ \on{Area}\left(U\setminus R_4\right)
			\\ & = a+b-3ab+(1-3a)(1-3b) + \frac{1}{2}(4a+b-1)^2-\frac{1}{2}(3a+b-1)^2+\frac{1}{2}\left(3a-\frac{7}{10}\right)^2,
		\end{align*}
  }
which in this case is minimized at $a=b= 1/4$ with value $0.4075 > 2/5$.
	\item Suppose $1/5\leq a <1/4$. As before, we cut out at least $\on{Area}(R_2)$ from the region $R_1\cup R_2$. Also, the argument \cref{eq:slab} from case (3) on the grey and red regions still holds, so we can cut out at least $\on{Area}(R_4)+\on{Area}\left(L\cap ([1-3a, 1-2a]\times[0, 1])\right)$ from $R_3\cup R_4$. The pink region $(4v+L)\cap [0, 1]^2\subset v+(3v+L)\cap [0, 1]^2$, so we also cut it out. Now, we do case on $a+b$.
	\begin{enumerate}
	\item Suppose $a+b \geq 17/50$. The top orange region $U\setminus R_5$ overlaps with the pink region $(4v+L)\cap [0, 1]^2$, so we can cut out the entire region $R_5$. Hence, in total, we cut out
		{\small \begin{align*}
			\frac{\left|\ol{S}\right|+O\left(n\right)}{n^2} & \geq 
			\on{Area}(R_2)+\on{Area}(R_4\cup R_5)+ \on{Area}\left(L\cap ([1-3a, 1-2a]\times[0, 1])\right)
    			\\ & \geq a+b-3ab+(1-3a)(1-3b)+\frac{1}{2}(4a+b-1)^2-\frac{1}{2}(3a+b-1)^2,
		\end{align*}}
    which in this case is minimized at $a=b=5/21$ with value $0.404\ldots > 2/5$.
	\item Suppose $a+b < 17/50$. We need to consider the white parallelogram between lines $x+y=5(a+b)$ and $x+y=17/10$, and with $x\in [4a, 1]$. We can compute its area $(4a-1)(5(a+b)-17/10)$ and subtract it from the total we cut out in case (a) to obtain
{\small \[
    \frac{\left|\ol{S}\right|+O\left(n\right)}{n^2} \geq a+b-3ab+(1-3a)(1-3b)+\frac{1}{2}(4a+b-1)^2-\frac{1}{2}(3a+b-1)^2 - (4a-1)\left(5(a+b)-\frac{17}{10}\right),
\]}
    which in this case is minimized at $(a, b)=(1/4, 9/100)$ with value $0.446\ldots > 2/5$.
	\end{enumerate}
	\item Suppose $a<1/5$. As $v\neq\mbf{0}$, there exists a unique positive integer $k\geq 3$ such that either \[ \frac{1}{2k}\leq a< \frac{1}{2k-1}\quad \text{or}\quad \frac{1}{2k+1}\leq a< \frac{1}{2k}.\]
	\begin{enumerate}
	\item For the first case, the non-empty regions are $\{ R_t: t\in [2k]\}$. By \cref{eq:pair-inj}, we include
		\[\frac{\left|S\right|-O\left(n\right)}{n^2}\leq \sum_{t=1}^{k} \on{Area}(R_{2t-1}) = k(a+b-(2k-1)ab),\]
  which under the (slightly relaxed) constraints $1/2k\leq a\leq 1/(2k-1)$ and $0\leq b\leq a$ is maximized at $a=b=1/(2k-1)$ with value $k/(2k-1) \leq 3/5$, as $k\geq 3$.
	\item For the second, the non-empty regions are $\{ R_t: t\in [2k+1]\}$. By \cref{eq:pair-inj}, we cut out
		\[\frac{\left|\ol{S}\right|+O\left(n\right)}{n^2}\geq \sum_{t=1}^{k} \on{Area}(R_{2t}) =k(a+b-(2k+1)ab),\]
which under the (slightly relaxed) constraints $1/(2k+1)\leq a\leq 1/2k$ and $0\leq b\leq a$ is minimized at $a=b=1/2k$ with value $(2k-1)/4k > 2/5$, as $k\geq 3$.
	\end{enumerate} 
\end{enumerate}

In all the cases above, we showed $\left(\left|\ol{S}\right|-O\left(n\right)\right)/n^2\geq 2/5$, so $\left|S\right|\leq 3n^2/5+O\left(n\right)$.
\end{proof}

\bibliographystyle{amsplain0.bst}
\bibliography{main.bib}

@article{Cam05,
  title={Research problems from the 19th {B}ritish {C}ombinatorial {C}onference},
  author={Cameron, Peter J},
  journal={Discrete Mathematics},
  volume={293},
  number={1-3},
  pages={313--320},
  year={2005},
  publisher={Amsterdam: North-Holland Pub. Co.,[1971-}
}

@misc{GreOp,
  title={100 open problems},
  author={Green, Ben},
  note={\url{https://people.maths.ox.ac.uk/greenbj/papers/open-problems.pdf}}
}

@misc{Cam02,
  title={Sum-free subsets of a square},
  author={Cameron, Peter J},
  note={\url{https://webspace.maths.qmul.ac.uk/p.j.cameron/odds/sfsq.pdf}}
}

@article {LWWY23,
    AUTHOR = {Liu, Hong and Wang, Guanghui and Wilkes, Laurence and Yang,
              Donglei},
     TITLE = {Shape of the asymptotic maximum sum-free sets in integer
              lattice grids},
   JOURNAL = {European J. Combin.},
  FJOURNAL = {European Journal of Combinatorics},
    VOLUME = {107},
      YEAR = {2023},
     PAGES = {Paper No. 103614, 17},
      ISSN = {0195-6698},
   MRCLASS = {05D10},
  MRNUMBER = {4488031},
       DOI = {10.1016/j.ejc.2022.103614},
       URL = {https://doi.org/10.1016/j.ejc.2022.103614},
}

@article {ER17,
    AUTHOR = {Elsholtz, Christian and Rackham, Laurence},
     TITLE = {Maximal sum-free sets of integer lattice grids},
   JOURNAL = {J. Lond. Math. Soc. (2)},
  FJOURNAL = {Journal of the London Mathematical Society. Second Series},
    VOLUME = {95},
      YEAR = {2017},
    NUMBER = {2},
     PAGES = {353--372},
      ISSN = {0024-6107},
   MRCLASS = {11B30 (11B75)},
  MRNUMBER = {3656272},
MRREVIEWER = {Wolfgang A. Schmid},
       DOI = {10.1112/jlms.12006},
       URL = {https://doi.org/10.1112/jlms.12006},
}

@article {WangWang,
    AUTHOR = {Wang, Bin and Wang, Ruodu},
     TITLE = {Joint mixability},
   JOURNAL = {Math. Oper. Res.},
  FJOURNAL = {Mathematics of Operations Research},
    VOLUME = {41},
      YEAR = {2016},
    NUMBER = {3},
     PAGES = {808--826},
   MRCLASS = {60C05 (60E15)},
  MRNUMBER = {3520751},
MRREVIEWER = {Ngoc\ Mai\ Tran},
       DOI = {10.1287/moor.2015.0755},
       URL = {https://doi.org/10.1287/moor.2015.0755},
}

@article {peabody,
    AUTHOR = {Pebody, Luke},
     TITLE = {Proof of a conjecture of {K}leinberg-{S}awin-{S}peyer},
   JOURNAL = {Discrete Anal.},
  FJOURNAL = {Discrete Analysis},
      YEAR = {2018},
     PAGES = {Paper No. 13, 7},
      ISSN = {2397-3129},
   MRCLASS = {11B30 (05D40)},
  MRNUMBER = {3827119},
MRREVIEWER = {Wolfgang\ A.\ Schmid},
       DOI = {10.19086/da.3733},
       URL = {https://doi.org/10.19086/da.3733},
}

@article {MR3385976,
    AUTHOR = {Wang, Ruodu},
     TITLE = {Current open questions in complete mixability},
   JOURNAL = {Probab. Surv.},
  FJOURNAL = {Probability Surveys},
    VOLUME = {12},
      YEAR = {2015},
     PAGES = {13--32},
   MRCLASS = {60C05 (60E05 60E15)},
  MRNUMBER = {3385976},
MRREVIEWER = {Christian Rau},
       DOI = {10.1214/14-PS250},
       URL = {https://doi.org/10.1214/14-PS250},
}

@article {LS19,
    AUTHOR = {Lov\'{a}sz, L\'{a}szl\'{o} Mikl\'{o}s and Sauermann, Lisa},
     TITLE = {A lower bound for the {$k$}-multicolored sum-free problem in
              {$\mathbb{Z}^n_m$}},
   JOURNAL = {Proc. Lond. Math. Soc. (3)},
  FJOURNAL = {Proceedings of the London Mathematical Society. Third Series},
    VOLUME = {119},
      YEAR = {2019},
    NUMBER = {1},
     PAGES = {55--103},
      ISSN = {0024-6115,1460-244X},
   MRCLASS = {11B30 (05D40 05E15)},
  MRNUMBER = {3957831},
MRREVIEWER = {Jakub\ Micha\l \ Konieczny},
       DOI = {10.1112/plms.12223},
       URL = {https://doi.org/10.1112/plms.12223},
}

@misc{kss,
	journal={Discrete Analysis},
	doi={10.19086/da.3734},
	title={The growth rate of tri-colored sum-free sets},
	author={Kleinberg, Robert and Sawin, Will and Speyer, David},
	date={2018-07-06},
	year=2018,
	month=7,
	day=6,
}

@article {katz,
    AUTHOR = {Katz, Daniel J.},
     TITLE = {On sumfree subsets of hypercubes},
   JOURNAL = {Acta Arith.},
  FJOURNAL = {Acta Arithmetica},
    VOLUME = {137},
      YEAR = {2009},
    NUMBER = {2},
     PAGES = {139--153},
      ISSN = {0065-1036},
   MRCLASS = {11B75 (11P70)},
  MRNUMBER = {2491533},
MRREVIEWER = {Ben Joseph Green},
       DOI = {10.4064/aa137-2-3},
       URL = {https://doi.org/10.4064/aa137-2-3},
}

\end{document}